\documentclass[11pt, a4paper, reqno]{amsart}

\usepackage{microtype}

\usepackage[utf8]{inputenc}

\usepackage{fancyhdr}

\fancyheadoffset[L,R]{0pt}

\usepackage{amsmath}
\numberwithin{equation}{section}
\usepackage[letterpaper,hmargin=1.0in,vmargin=1.0in]{geometry}
\usepackage{amsmath,amsthm,amsfonts,amssymb,mathrsfs,stmaryrd,bigints}
\usepackage{dsfont}
\usepackage{mathtools}
\usepackage[usenames]{color}
\usepackage{MnSymbol}
\usepackage[colorlinks=true,linkcolor=blue]{hyperref}

\usepackage{tikz}
\usetikzlibrary{decorations.pathreplacing,calligraphy}
\usepackage{comment}
\usepackage{float}
\usepackage{epstopdf}
\usepackage{amssymb}
\usepackage{setspace}
\usepackage{enumerate}
\usepackage{bigstrut}
\usepackage{multirow}
\usepackage{mathtools,xparse}
\usepackage{mathrsfs}
\usepackage{hyperref}

\usepackage [autostyle, english = american]{csquotes}
\newtheorem*{lemma*}{Lemma}
\newtheorem{theorem}{Theorem}[section]
\newtheorem{corollary}[theorem]{Corollary}
\newtheorem{lemma}[theorem]{Lemma}
\newtheorem{proposition}[theorem]{Proposition}
\theoremstyle{definition}

\newtheorem{remark}[theorem]{Remark}

\allowdisplaybreaks
\usepackage{chngcntr}

\definecolor{darkmagenta}{RGB}{160, 10, 160}

\newcommand{\E}{\mathbb{E}}

\newcommand{\ent}{h}
\newcommand{\Z}{\mathbb{Z}}

	\renewcommand{\P}{\mathbb{P}}

\DeclareMathOperator{\diam}{diam}

\renewcommand{\setminus}{\backslash}

\newtheorem{maintheorem}{Theorem}[section]

\def\E{\mathbb E}

\def\Ld L
\def\L{\mathbb L}

\def\P{\mathbb P}

\def\Z{\mathbb Z}

\def\been{\begin{enumerate}}
\def\bee{\begin{example}}
\def\beit{\begin{itemize}}
\def\bel{\begin{lemma}}
\def\bepr{\begin{proposition}}
\def\bep{\begin{proof}}
\def\bet{\begin{theorem}}
\def\bec{\begin{corollary}}
\def\enc{\end{corollary}}

\def\de\de 
\def\diam{\ms{diam}}

\def\enen{\end{enumerate}}
\def\ene{\end{example}}
\def\enit{\end{itemize}}
\def\enl{\end{lemma}}
\def\enpr{\end{proposition}}
\def\enp{\end{proof}}
\def\ent{\end{theorem}}

\def\ms{\mathsf}

\def\de{\delta}

\def\to \infty{\uparrow\infty}

\def\x{x}

\def\y{y}
\def\bef{\begin{figure}[!h]}
\def\enf{\end{figure}}

\newtheorem{theoremA}{Theorem}

\usepackage{kotex}

\begin{document}

\title[Disordered Wetting for the 2D SOS Model]
{Wetting Transition of the Two-Dimensional Solid-on-Solid Model with Quenched Disorder}
\date{\today}

\author{Seokun Choi}
\begin{abstract}
We study the wetting transition for a two-dimensional Solid-On-Solid (SOS) interface above a hard wall in the presence of quenched disorder.
The interface is represented by a nonnegative integer-valued height function $\phi$, with Hamiltonian
\begin{align}
    \mathcal H^\omega(\phi)
    =
    \beta \sum_{x\sim y} |\phi(x)-\phi(y)|
    -
    \sum_x
    \bigl(h+\alpha\omega_x-\lambda(\alpha)\bigr)
    \mathbf 1_{\{\phi(x)=0\}},
\end{align}
where $(\omega_x)_{x\in\mathbb Z^2}$ is an i.i.d.\ centered field, $\alpha\geq0$ is the disorder strength, and $\lambda(\alpha)=\log\mathbb E[e^{\alpha\omega_0}]$.
The normalization is chosen so that the annealed model coincides with the corresponding homogeneous wetting model.
As the wall attraction $h$ increases, the interface undergoes a wetting transition from a delocalized phase, in which contacts with the wall have vanishing density, to a localized phase with a positive density of contacts.
The critical point $h_c$ is the threshold value of $h$ separating these two regimes.

For sufficiently large $\beta$, the homogeneous wetting point and the sharp near-critical behavior of the homogeneous free energy were identified in \cite{lacoin2018wetting}.
We prove that, for every fixed $\alpha\geq0$, the quenched critical point coincides with the homogeneous wetting point:
\begin{align}
    h_c(\beta,\alpha)
    =
    h_{w}(\beta)
    =
    -\log\bigl(1-e^{-4\beta}\bigr).
\end{align}
Moreover, writing $u=h-h_{w}(\beta)$, we show that the quenched excess free energy satisfies
\begin{align}
    \overline{F}_\beta(\alpha,u)
    =
    \overline{F}_{\mathrm{homo}}(\beta,u)
    +
    o(u^3),
    \qquad
    u\rightarrow 0+,
\end{align}
where $\overline{F}_{\mathrm{homo}}(\beta,u)$ denotes the leading homogeneous wetting asymptotic and satisfies $\overline{F}_{\mathrm{homo}}(\beta,u)\asymp u^3$.

Thus, at the level of the critical point and the leading near-critical free-energy asymptotics, the quenched model has the same behavior as the homogeneous SOS wetting model.
This contrasts with the disordered SOS pinning model, in which quenched disorder leaves the critical point unchanged but modifies the leading critical behavior of the free energy.
\end{abstract}

\address[Seokun Choi]{Department of Mathematical Sciences, KAIST, South Korea}
\email{seokunchoi@kaist.ac.kr}
\subjclass[2010]{}
\keywords{}
\maketitle
\setcounter{secnumdepth}{2}
\setcounter{tocdepth}{2}
\tableofcontents
\section{Introduction}\label{chap:1}

\subsection{Motivation for the SOS model}\label{subsec:1.1}

The Solid-On-Solid (SOS) model is a classical lattice model for random surfaces, going back to the work of Temperley \cite{temperley1952statistical}.
At low temperature, it is commonly viewed as an effective description of an interface separating two coexisting phases, with the low-temperature three-dimensional Ising model under mixed boundary conditions providing a prototypical example; see, for instance, \cite{caputo2016scaling,lacoin2018wetting}.
In the SOS approximation, the interface is represented as the graph of an integer-valued height function, thereby suppressing overhangs and microscopic bulk fluctuations.
This simplification makes the model substantially more tractable while retaining a rich collection of interface phenomena, including rigidity, roughening, entropic repulsion, wetting, and layering.

Of particular relevance to the present work is the behavior of the SOS interface in the presence of a hard wall.
Even in the low-temperature regime, where the unconstrained interface is rigid, the constraint that the surface remain above the wall induces an entropic repulsion effect: the interface moves away from the wall in order to preserve its freedom to fluctuate.
This phenomenon was studied rigorously in the classical work \cite{bricmont1986random} and has since been analyzed in considerable detail; see, e.g., \cite{caputo2016scaling,velenik2006localization}.
In the two-dimensional SOS model, this competition between energetic rigidity and entropic effects leads to a particularly rich geometric structure, which can be studied through level lines and contour representations.

A further competition arises when the wall exerts an attractive interaction on the interface.
The attraction favors contacts with the wall, while entropic repulsion favors configurations that remain away from it.
Their competition gives rise to a wetting transition between a localized regime, in which a positive density of contacts with the wall is present, and a delocalized regime, in which the interface is repelled from the wall.
The existence of such a transition for the SOS model goes back to Chalker \cite{chalker1982pinning}, and its low-temperature structure has subsequently been studied in increasing detail \cite{alexander2011layering,lacoin2018wetting}.
This makes the SOS model a natural setting in which to investigate how microscopic modifications of the wall interaction affect the macroscopic wetting behavior.

\subsection{Disorder and wetting phenomena}\label{subsec:1.2}

Disorder enters statistical-mechanical modeling in a natural way, since interacting constituents need not be identical and substrates or surrounding media are rarely perfectly homogeneous.
Homogeneous models are obtained by neglecting such heterogeneities, a simplification that is often both natural and mathematically useful, especially at the level of idealized models \cite{giacomin2011disorder}.
This raises the basic question of whether the phenomena identified in the homogeneous system remain stable when quenched randomness is introduced.
In particular, even a weak local random perturbation may have a non-negligible effect near a phase transition, where the system is sensitive to fluctuations over large scales.

For localization and wetting models, one is therefore led to compare homogeneous and disordered systems at two distinct levels: the location of the transition and the behavior close to criticality.
These are conceptually different questions.
This distinction is closely related to the notions of disorder relevance and irrelevance originating from the work of Harris \cite{harris1974effect}; see also \cite{giacomin2011disorder}.
Roughly speaking, disorder is called irrelevant at the level of critical behavior when the leading critical behavior of the disordered system agrees with that of the corresponding pure system, whereas relevant disorder modifies this behavior.
In the present model, we show that the homogeneous and disordered wetting free energies have the same leading near-critical order, namely of order $u^3$.
For sufficiently large $\beta$ and every fixed $\alpha\ge0$, we prove that the quenched and homogeneous excess free energies differ by $o(u^3)$ as $u=h-h_w(\beta)\rightarrow 0+$, while both are of order $u^3$.
Thus the leading variational asymptotic, and not merely its order, is preserved.
In this paper, disorder irrelevance refers to this agreement of the leading free-energy asymptotics, without asserting that finer thermodynamic or geometric properties are unchanged.
Thus, in this sense, the quenched disorder is irrelevant at the level of the leading critical behavior.

Random-interface models provide examples in which disorder has different effects.
For the lattice Gaussian free field with a disordered pinning potential in dimension $d\geq3$, the quenched critical point coincides with the annealed one, while the critical behavior of the free energy is modified by the disorder \cite{giacomin2017pinning}.
The analysis combines fractional-moment estimates and a tilt of the disorder for the upper bound with finite-volume and coarse-graining arguments, together with replica coupling and Gaussian interpolation, for the sharp lower bound.
In a related wetting problem for the lattice free field in dimension three or larger, quenched disorder has also been studied in the presence of a wall, and the resulting behavior differs from that of the pinning problem: the transition remains of infinite order, as in the homogeneous model \cite{giacomin2018disorder}.
These examples illustrate that the effect of disorder depends on both the underlying interface and the way in which it interacts with the substrate.

Similar questions arise naturally for discrete SOS interfaces.
In the two-dimensional SOS model, the interface is represented by an integer-valued height function on $\mathbb Z^2$, with neighboring height differences penalized energetically.
At low temperature the interface is rigid, but the presence of a hard wall and an attractive interaction with the substrate creates a nontrivial competition between entropic repulsion and localization at the wall.
This makes the SOS model a particularly natural setting in which to study how a localization transition is affected by spatial disorder.
Both the homogeneous wetting problem and the disordered pinning problem for the SOS model have been studied sharply in the low-temperature regime.
These results provide the natural reference points for the present work and will be recalled more precisely in Subsection~\ref{subsec:1.3}.
Our question is whether the near-critical behavior of homogeneous SOS wetting persists when the interaction with the wall is made spatially disordered.

\subsection{Previous results and our contribution}\label{subsec:1.3}

The homogeneous wetting problem for the two-dimensional SOS model was studied sharply by Lacoin in \cite{lacoin2018wetting}.
In the low-temperature regime, the homogeneous wetting point, which we denote by $h_w(\beta)$, was identified explicitly as
\begin{align}
    h_w(\beta)
    =
     \log\left(
        \frac{e^{4\beta}}{e^{4\beta}-1}\right).
\end{align}
Moreover, the free energy near the wetting point was described sharply through a variational problem over the integer localization height of the interface; see \eqref{eq:homogeneous-wetting-variational}.
In particular, if $u>0$ denotes the distance to the wetting point, then the excess free energy is of order $u^3$ as $u\rightarrow 0+$.
The main result of the present work shows that this leading near-critical behavior is preserved under quenched disorder.
In this sense, the disorder is irrelevant at the level of the leading critical behavior.

Such a conclusion is not automatic in disordered interface models.
A closely related example is the disordered SOS pinning model studied in \cite{lacoin2021solid}, in which the interface is allowed to cross the defect plane and receives a site-dependent random reward upon visiting level zero.
With a centered normalization of the disorder, the quenched critical point coincides with the homogeneous one.
Nevertheless, the critical behavior of the free energy is changed by the disorder: while the homogeneous pinning free energy grows linearly above criticality, the disordered free energy is of quadratic order and is governed by a different variational asymptotic.
Thus, equality of the quenched and homogeneous critical points does not by itself imply that disorder is irrelevant for the critical behavior.

For the disordered wetting model considered here, we show that this leading-order picture remains unchanged.
For sufficiently large $\beta$ and every fixed disorder strength $\alpha\geq0$, we prove that the quenched critical point, denoted by $h_c(\beta,\alpha)$, coincides with the homogeneous wetting point:
\begin{align}
    h_c(\beta,\alpha)=h_w(\beta).
\end{align}
Moreover, the quenched excess free energy has the same leading near-critical asymptotics as in the homogeneous wetting model, up to an error of smaller order than $u^3$.
Thus, unlike in the disordered SOS pinning model discussed above, quenched disorder changes neither the location of the transition nor its leading free-energy behavior.
The precise definitions and the full statement of the result are given in Subsections \ref{subsec:2.2}, \ref{subsec:2.3}.

Our proof is partly inspired by the coarse-graining method developed for disordered SOS pinning in \cite{lacoin2021solid}, but the wetting interaction introduces an additional difficulty.
In the pinning model, the interaction is sitewise and therefore factorizes over coarse-grained regions once the SOS field has been conditionally decomposed.
In the wetting representation used here, by contrast, the interaction depends on the connected components of the contact set.
A single connected component may intersect several coarse-grained regions, and the interaction therefore does not inherit the same factorization.
A main technical ingredient of the present work is to overcome this obstruction while preserving the sharp local contributions needed for the near-critical asymptotics.

\subsection{Organization of the paper}
\label{subsec:1.4}

Chapter~\ref{chap:2} introduces the model and the thermodynamic quantities used throughout the paper.
We first recall the two-dimensional SOS model and the basic low-temperature properties that will be needed later.
We then define the disordered wetting model, its quenched free energy, and the corresponding critical point in Subsections~\ref{subsec:2.1}--\ref{subsec:2.2}.
The main result is stated in Subsection~\ref{subsec:2.3}: at sufficiently low temperature, the quenched critical point coincides with the homogeneous wetting point and the leading near-critical free-energy asymptotics agrees with that of the homogeneous model.
Subsection~\ref{subsec:2.4} gives an overview of the proof and explains the main difficulties caused by the connected-component dependence of the wetting interaction, while Subsection~\ref{subsec:2.5} discusses several questions left open by the present result.

Chapter~\ref{chap:3} collects the probabilistic and geometric tools needed for the lower-bound argument.
In Subsection~\ref{subsec:3.1}, we remove the hard-wall constraint by rewriting the wetting partition function in terms of an unconstrained SOS field and a wall penalty depending on the connected components of its non-positive level set.
After a change of boundary condition and reflection, this reduces the problem to the study of rare high-level excursions.
Subsection~\ref{subsec:3.2} recalls the contour representation of the low-temperature SOS model and introduces contour-restricted measures, which are the natural conditional laws arising from the coarse-graining procedure.
Finally, Subsection~\ref{subsec:3.3} collects the one-point, two-point, and multipoint peak estimates required later, together with the sharp asymptotics for isolated one-site and nearest-neighbor two-site high-level components.

Chapter~\ref{chap:4} performs the coarse-grained reduction of the quenched lower bound.
We begin in Subsection~\ref{subsec:4.1} by passing to a finite-volume high-level representation and identifying the localization window for the relevant boundary height $n$, characterized by $J^n\asymp u$.
In Subsection~\ref{subsec:4.2}, we restrict to configurations without long contours, introduce a mesoscopic grid, and condition on the external contours crossing this grid; this yields a decomposition into regions over which the underlying SOS field is conditionally independent.
Subsection~\ref{subsec:4.3} records the geometry of these regions and isolates deterministic cell cores, showing that cell-boundary strips, contour regions, and cross-region edges occupy only a lower-order portion of the system.
In Subsection~\ref{subsec:4.4}, we use this geometry to decompose the wall interaction: exact singleton and nearest-neighbor two-site components in the cell cores are retained sharply, whereas all remaining non-singleton contributions are controlled through rough occupied-edge penalties and a cross-region error.
Combining this pointwise decomposition with the conditional factorization in Subsection~\ref{subsec:4.5} reduces the global lower bound to three terms: sharp cell partition functions, rough contour-region contributions, and a cross-edge correction.

Chapter~\ref{chap:5} estimates the three terms produced by the reduction in Chapter~\ref{chap:4} and completes the proof of the sharp lower bound.
Subsection~\ref{subsec:5.1} first compares the conditioned SOS law deep inside a cell with the infinite-volume SOS measure, thereby recovering the sharp one-site and two-site peak probabilities with the coefficients appearing in the homogeneous variational formula.
These pointwise estimates are converted in Subsection~\ref{subsec:5.2} into estimates for the numbers of exact singleton and pair components, while the remaining occupied edges and the contact cutoff are shown to be negligible.
Subsection~\ref{subsec:5.3} then derives the sharp deterministic contribution of a bulk cell and proves, through a second-moment argument, that the centered disorder changes its quenched logarithm only at a lower order.
In Subsection~\ref{subsec:5.4}, the contour-region and cross-edge terms are controlled by rough contour-restricted estimates and are shown to be $o(N^2u^3)$.
Finally, Subsection~\ref{subsec:5.5} combines these bounds with the coarse-grained decomposition, lets the volume tend to infinity, and optimizes over the localization height to obtain the matching quenched lower bound and hence complete the proof of the main theorem.

Appendix~\ref{app:coarse-grained-geometry} contains the geometric estimates for the coarse-grained regions used in Subsection~\ref{subsec:4.3}, including the bounds on the deterministic cores, contour regions, and cross-edges.
Appendix~\ref{app:local-comparison} proves the local comparison used in Subsection~\ref{subsec:5.1}, showing that sufficiently deep inside a coarse-grained cell the conditioned SOS law differs from the infinite-volume law only by an exponentially small error.

\section{Model and preliminary properties}
\label{chap:2}

\subsection{The SOS model}\label{subsec:2.1}

In this subsection, we introduce the basic notation for the two-dimensional SOS model that will be used throughout the paper.
We also recall two standard properties of the low-temperature SOS measure that will be repeatedly used below: the existence of the thermodynamic free energy and the exponential convergence of finite-volume local statistics to their infinite-volume counterparts.

Throughout the paper, we work on the lattice $\mathbb Z^2$ equipped with the nearest-neighbor graph structure.
For integers $a\le b$, we write
\begin{align*}
    \llbracket a,b\rrbracket
    &:=
    [a,b]\cap\mathbb Z,
    \qquad
    \Lambda_N
    :=
    \llbracket 1,N\rrbracket^2.
\end{align*}
For a finite set $\Lambda\subset\mathbb Z^2$, its external boundary is defined by
\begin{align*}
    \partial\Lambda
    &:=
    \bigl\{
        x\in\mathbb Z^2\setminus\Lambda:
        \exists\,y\in\Lambda,\ x\sim y
    \bigr\}.
\end{align*}
We write
\begin{align*}
    \Omega_\Lambda
    &:=
    \mathbb Z^\Lambda,
    \qquad
    \Omega_\Lambda^+
    :=
    \mathbb Z_+^\Lambda,
    \qquad
    \mathbb Z_+
    :=
    \mathbb Z\cap[0,\infty).
\end{align*}
Given a boundary height $n\in\mathbb Z$ and a configuration $\phi\in\Omega_\Lambda$, the SOS Hamiltonian with boundary condition $n$ is
\begin{align}
    \mathcal H_\Lambda^n(\phi)
    &:=
    \frac12
    \sum_{\substack{x,y\in\Lambda\\x\sim y}}
    |\phi(x)-\phi(y)|
    +
    \sum_{\substack{x\in\Lambda,\ y\in\partial\Lambda\\x\sim y}}
    |\phi(x)-n|.
    \label{eq:SOS-Hamiltonian}
\end{align}
The factor $1/2$ is included so that each unordered nearest-neighbor edge inside $\Lambda$ is counted once.
For $\beta>0$, the SOS measure with boundary condition $n$ is
\begin{align}
    \P_{\Lambda,\beta}^n(\phi)
    &:=
    \frac{1}{\mathcal Z_{\Lambda,\beta}}
    \exp\bigl\{-\beta\mathcal H_\Lambda^n(\phi)\bigr\},
    \qquad
    \phi\in\Omega_\Lambda,
    \label{eq:SOS-measure}
\end{align}
where
\begin{align}
    \mathcal Z_{\Lambda,\beta}
    &:=
    \sum_{\phi\in\Omega_\Lambda}
    \exp\bigl\{-\beta\mathcal H_\Lambda^n(\phi)\bigr\}.
    \label{eq:SOS-partition}
\end{align}
By vertical translation invariance, $\mathcal Z_{\Lambda,\beta}$ does not depend on $n$.
When $n=0$, we omit the superscript and simply write
\begin{align*}
    \mathcal H_\Lambda,
    \qquad
    \P_{\Lambda,\beta},
    \qquad
    \E_{\Lambda,\beta}.
\end{align*}
When $\Lambda=\Lambda_N$, we write
\begin{align*}
    \mathcal Z_{N,\beta}
    &:=
    \mathcal Z_{\Lambda_N,\beta},
\end{align*}
and adopt the analogous convention for the corresponding probability measure and expectation.

The free energy of the unconstrained SOS model will serve as the reference thermodynamic quantity when we introduce the wetting free energy below.
We therefore first record the existence of its infinite-volume limit.
If $\Lambda^{(1)}$ and $\Lambda^{(2)}$ are disjoint finite subsets of $\mathbb Z^2$, then, for every $\phi\in\Omega_{\Lambda^{(1)}\cup\Lambda^{(2)}}$,
\begin{align}
    \mathcal H_{\Lambda^{(1)}\cup\Lambda^{(2)}}(\phi)
    &\le
    \mathcal H_{\Lambda^{(1)}}
    \bigl(\phi\bigr)
    +
    \mathcal H_{\Lambda^{(2)}}
    \bigl(\phi\bigr).
    \label{eq:SOS-Hamiltonian-superadditivity}
\end{align}
Consequently,
\begin{align}
    \mathcal Z_{\Lambda^{(1)}\cup\Lambda^{(2)},\beta}
    &\ge
    \mathcal Z_{\Lambda^{(1)},\beta}
    \mathcal Z_{\Lambda^{(2)},\beta}.
    \label{eq:SOS-partition-superadditivity}
\end{align}
By the standard superadditivity argument; see, for instance, \cite[Section 3.2]{friedli1945statistical}, the limit
\begin{align}
    F(\beta)
    &:=
    \lim_{N\rightarrow\infty}
    \frac{1}{N^2}
    \log\mathcal Z_{N,\beta}
    \label{eq:SOS-free-energy}
\end{align}
exists.
We refer to $F(\beta)$ as the free energy of the unconstrained SOS model.

The following exponential convergence result allows us to replace finite-volume expectations of local observables by their infinite-volume counterparts, with an error that decays exponentially in the distance to the boundary.
A function
\begin{align*}
    g:\mathbb Z^{\mathbb Z^2}\rightarrow\mathbb R
\end{align*}
is called local if there exists a finite set $A\subset\mathbb Z^2$ such that $g(\phi)$ depends only on $(\phi(x))_{x\in A}$.
We denote the minimal such set by $\operatorname{Supp}(g)$.
For two nonempty finite subsets $A,B\subset\mathbb Z^2$, we define
\begin{align}
    \operatorname{dist}(A,B)
    &:=
    \min_{\substack{x\in A\\y\in B}}
    |x-y|_1,
    \label{eq:set-distance}
\end{align}
where $|\cdot|_1$ denotes the $\ell^1$-distance on $\mathbb Z^2$.

\begin{theoremA}[From \cite{brandenberger1982decay}]
\label{thm:exponential-convergence-SOS}
There exist $\beta_0>0$ and $c>0$ such that, for every $\beta>\beta_0$, there exists an infinite-volume Gibbs measure $\P_\beta$ satisfying the following property.
For every local function
\begin{align*}
    g:\mathbb Z^{\mathbb Z^2}\rightarrow[0,1]
\end{align*}
with $\operatorname{Supp}(g)=A$, and every finite $\Lambda\subset\mathbb Z^2$ containing $A$, we have
\begin{align}
    \left|
        \E_{\Lambda,\beta}[g(\phi)]
        -
        \E_\beta[g(\phi)]
    \right|
    &\le
    |A|
    \exp\bigl\{-c\beta \operatorname{dist}(A,\partial\Lambda)\bigr\}.
    \label{eq:SOS-exponential-convergence}
\end{align}
\end{theoremA}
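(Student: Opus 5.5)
The statement is the classical exponential decay of correlations for the low-temperature SOS model, due to Brandenberger and Wayne. I would prove it with a cluster (polymer) expansion built on the contour representation of the SOS field. The two-dimensional SOS measure with boundary height $0$ can be encoded bijectively by a compatible family of oriented level lines (contours) $\gamma$ in the dual lattice. The weight of a configuration factorizes as $\prod_\gamma e^{-\beta|\gamma|}$, subject only to a nesting and orientation compatibility rule. Since the energy is $\beta$ times the total contour length, the contours form a polymer gas with activity $e^{-\beta|\gamma|}$. For $\beta$ large, the Kotecký–Preiss condition $\sum_{\gamma\ni 0} e^{-\beta|\gamma|+a|\gamma|}<1$ holds, because the number of contours of length $\ell$ through a given edge is at most $3^\ell$.

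\textbf{Step 1: reduction to a convergent gas.} Nested contours carry height information, and that is the only obstruction to a standard polymer model. To handle it, I would group each external contour together with everything inside it into a single polymer. Equivalently, I would use the Brandenberger–Wayne device of summing out inner contours with a constrained partition function, whose ratio to the unconstrained one is $\exp(O(e^{-\beta}|\gamma|))$. This renormalizes the activity to at most $e^{-(\beta-C)|\gamma|}$, which is still small for $\beta>\beta_0$. Cluster expansion then gives, for every finite $\Lambda$, a convergent expansion of $\log \mathcal Z_{\Lambda,\beta}$ and of $\log \E_{\Lambda,\beta}[e^{t g}]$. Because $g$ takes values in $[0,1]$, one can equivalently expand $\E_{\Lambda,\beta}[g]$ directly as a sum over clusters that touch $A$.

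\textbf{Step 2: definition of $\P_\beta$ and comparison.} I would define $\E_\beta[g]$ as the same cluster sum with $\Lambda$ replaced by $\mathbb Z^2$; this converges absolutely by Kotecký–Preiss. It is a Gibbs measure, since it is the limit of $\P_{\Lambda_N,\beta}$, and that limit exists by the bound proved in the next step. The difference $\E_{\Lambda,\beta}[g]-\E_\beta[g]$ only involves clusters that are connected to $A$ and that either reach $\partial\Lambda$ or would have been cut by it. Such a cluster has total length at least $\operatorname{dist}(A,\partial\Lambda)$. Summing over a root point of $A$, the cluster bound with the extra factor $e^{a|\cdot|}$ gives at most $|A|\,e^{-(\beta-C)\operatorname{dist}(A,\partial\Lambda)}$, which is at most $|A|e^{-c\beta\operatorname{dist}(A,\partial\Lambda)}$ for $\beta$ large.

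\textbf{Main obstacle.} The delicate step is Step 1: making the SOS contour model a genuine polymer gas despite the height-dependence, namely the fact that contour orientations must be consistent across nestings. A second subtlety is controlling local observables $g$ that depend on heights, not only on contours. For the latter, I would write $\phi(x)$ as a signed count of the contours surrounding $x$. An event in $\sigma(\phi|_A)$ is then determined by the contours surrounding points of $A$. The only contours that matter are those enclosing or meeting $A$, and a single such contour is already long when it reaches $\partial\Lambda$. I would also bound directly the probability that some contour surrounding a point of $A$ has length at least $\operatorname{dist}(A,\partial\Lambda)$, via the Peierls estimate $\sum_{\ell\ge d}\ell\,3^\ell e^{-\beta\ell}$. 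This estimate handles the non-clusterable part and completes the bound.
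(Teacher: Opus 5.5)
The paper does not prove Theorem~A; it imports it from Brandenberger--Wayne, so your sketch has to stand on its own. The overall plan (contours as a polymer gas, then cluster expansion) is the right one. However, Step~1 attacks an obstruction that is not there.

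By Lemma~\ref{lem:contour-distribution-domination}, on a simply connected domain the contour law is already the product measure conditioned on \emph{pairwise} compatibility. So cylinders $(\gamma,k)$ with activity $e^{-\beta k|\widetilde\gamma|}$, together with the paper's compatibility relation, form an abstract polymer gas as they stand. Nested contours whose neighbourhoods $\Delta_\gamma$ do not meet are compatible whatever their signs. Hence each cylinder is incompatible only with cylinders passing within distance $O(1)$ of $\widetilde\gamma$, and Koteck\'y--Preiss follows from the $3^\ell$ count.

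Your literal grouping is actually harmful. The activity of an external contour together with its whole interior contains the interior partition function. That factor is exponential in $|\overline\gamma|$ and beats $e^{-\beta|\widetilde\gamma|}$ for large contours. Only the Dobrushin--Koteck\'y--Shlosman normalisation rescues it, and that needs its own inductive proof. Drop Step~1.

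The genuine gap is in Step~2. A height observable $g(\phi|_A)$ depends on the whole random family $\mathcal F$ of cylinders whose interiors meet $A$: unboundedly many, nested, and of any size. So $\E_{\Lambda,\beta}[g]$ is not a sum of clusters touching $A$, and $g\in[0,1]$ does not make it one. What the expansion does give is, for each admissible $F$,
\[
\P_{\Lambda,\beta}(\mathcal F=F)=w(F)\,e^{-S_\Lambda(F)}.
\]
Here $w(F)$ is the product of the activities, and $S_\Lambda(F)$ sums the cluster weights over clusters in $\Lambda$ that contain a polymer incompatible with $F$ or surrounding a point of $A$; the same holds with $\mathbb Z^2$ in place of $\Lambda$. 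With $d:=\operatorname{dist}(A,\partial\Lambda)$ this gives
\[
\bigl|\E_{\Lambda,\beta}[g]-\E_\beta[g]\bigr|
\le
\P_\beta\bigl(\exists\,\gamma\in\mathcal F:\ \overline\gamma\not\subset\Lambda\bigr)
+\sum_{F\subset\mathsf{Cont}_\Lambda}\P_\beta(\mathcal F=F)\,\bigl|e^{S_\infty(F)-S_\Lambda(F)}-1\bigr|.
\]
Your Peierls remark controls only the first term.

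In the second term, clusters rooted at $\gamma\in F$ contribute up to $C|\widetilde\gamma|e^{-c\operatorname{dist}(\widetilde\gamma,\Lambda^c)}$. This is of order $|\widetilde\gamma|$ when $\gamma$ comes close to $\partial\Lambda$, so your claim that every relevant cluster has length at least $d$ is false. You must use $\operatorname{dist}(\widetilde\gamma,\Lambda^c)\ge d-|\widetilde\gamma|$ and pay with the weight of $\gamma$. Summing naively over $F$ then produces $e^{\epsilon_\beta|A|}$ instead of $|A|$. ``Summing over a root point of $A$'' cannot help, since the clusters are rooted at the random contours of $F$.

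To get the stated prefactor, note first that the claim is trivial when $|A|e^{-c\beta d}\ge1$. Otherwise,
\[
|S_\infty(F)-S_\Lambda(F)|\le Y(F):=C\Bigl(|A|e^{-cd}+\sum_{\gamma\in F}|\widetilde\gamma|\min\bigl\{1,e^{c(|\widetilde\gamma|-d)}\bigr\}\Bigr).
\]
Since $Ye^{Y}$ is increasing, domination of $\mathcal F$ by the independent contour process gives $\E_\beta[Y(\mathcal F)e^{Y(\mathcal F)}]\le C|A|e^{-cd}$.

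Finally, Theorem~A is stated for arbitrary finite $\Lambda$, whereas this contour representation is for simply connected domains. Holes carrying height $0$ need a separate argument.
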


\subsection{The disordered wetting model and quenched free energy}\label{subsec:2.2}

In this subsection, we introduce the SOS wetting model with a disordered interaction at the wall.
Our aim is to understand how the presence of disorder affects the localization transition compared with the homogeneous wetting model.
We first define the corresponding Gibbs measure and then introduce the thermodynamic quantities that will be used to study this question.

Let $(\omega_x)_{x\in\mathbb Z^2}$ be an i.i.d.\ field, and let $\P$ and $\E$ denote its law and expectation, respectively.
We assume that
\begin{align}
    \E[\omega_0]
    &=
    0,
    \qquad
    \lambda(\alpha)
    :=
    \log \E\bigl[e^{\alpha\omega_0}\bigr]
    <
    \infty,
    \qquad
    \alpha\in\mathbb R.
    \label{eq:disorder-assumption}
\end{align}
In the following discussion of disorder-induced rounding, we assume that the disorder distribution is non-degenerate, for instance that $\operatorname{Var}(\omega_0)>0$.
For $\alpha\ge0$, we set
\begin{align}
    \eta_x^{(\alpha)}
    &:=
    \alpha\omega_x-\lambda(\alpha),
    \label{eq:normalized-disorder}
\end{align}
so that
\begin{align}
    \E\left[e^{\eta_x^{(\alpha)}}\right]
    &=
    1.
    \label{eq:normalized-disorder-mean}
\end{align}

Given $\beta>0$, $\alpha\ge0$, $h\in\mathbb R$, and a boundary height $n\in\mathbb Z_+$, we define the disordered wetting partition function by
\begin{align}
    \mathcal Z_{\Lambda,\beta}^{+,n,h,\alpha,\omega}
    &:=
    \sum_{\phi\in\Omega_\Lambda^+}
    \exp\left\{
        -\beta\mathcal H_\Lambda^n(\phi)
        +
        \sum_{x\in\Lambda}
        \bigl(h+\eta_x^{(\alpha)}\bigr) \mathbf 1_{\{\phi(x)=0\}}
    \right\}.
    \label{eq:disordered-wetting-partition}
\end{align}
The corresponding Gibbs measure is
\begin{align}
    \P_{\Lambda,\beta}^{+,n,h,\alpha,\omega}(\phi)
    &:=
    \frac{1}{\mathcal Z_{\Lambda,\beta}^{+,n,h,\alpha,\omega}}
    \exp\left\{
        -\beta\mathcal H_\Lambda^n(\phi)
        +
        \sum_{x\in\Lambda}
        \bigl(h+\eta_x^{(\alpha)}\bigr) \mathbf 1_{\{\phi(x)=0\}}
    \right\},
    \qquad
    \phi\in\Omega_\Lambda^+,
    \label{eq:disordered-wetting-measure}
\end{align}
and we write $\E_{\Lambda,\beta}^{+,n,h,\alpha,\omega}$ for the corresponding expectation.
The term $-\lambda(\alpha)$ is a normalization which amounts only to a shift of the parameter $h$, but it is convenient when considering the annealed model, as in \cite{lacoin2021solid}.
Indeed, by independence of the disorder and \eqref{eq:normalized-disorder-mean}, for every finite $\Lambda$,
\begin{align}
    \E\left[
        \mathcal Z_{\Lambda,\beta}^{+,n,h,\alpha,\omega}
    \right]
    &=
    \mathcal Z_{\Lambda,\beta}^{+,n,h},
    \label{eq:annealed-is-homogeneous}
\end{align}
where the right-hand side denotes the homogeneous wetting partition function obtained by setting $\alpha=0$.
Thus, with this normalization, the annealed model coincides with the homogeneous wetting model with the same parameter $h$.
We simplify the notation as follows.
When $\alpha=0$, we write
\begin{align*}
    \mathcal Z_{\Lambda,\beta}^{+,n,h},
    \qquad
    \P_{\Lambda,\beta}^{+,n,h},
    \qquad
    \E_{\Lambda,\beta}^{+,n,h}.
\end{align*}
When $n=0$, we omit $n$ from the notation.
When $\Lambda=\Lambda_N$, we replace the subscript $\Lambda$ by $N$.

Our aim is to understand whether the introduction of disorder changes the wetting transition and the near-critical behavior of the free energy.
We first introduce the finite-volume free-energy density associated with the disordered wetting model.
For $n\in\mathbb Z_+$, set
\begin{align}
    F_N^{n,\omega}(\beta,\alpha,h)
    &:=
    \frac{1}{N^2}
    \log
    \mathcal Z_{N,\beta}^{+,n,h,\alpha,\omega}.
    \label{eq:finite-volume-wetting-free-energy}
\end{align}
Since the unconstrained SOS model already contributes the bulk free energy $F(\beta)$, the effect of the wall interaction and the disorder is described by the excess free energy.
We therefore define
\begin{align}
    \overline{F}_N^{n,\omega}(\beta,\alpha,h)
    &:=
    F_N^{n,\omega}(\beta,\alpha,h)
    -
    \frac{1}{N^2}
    \log\mathcal Z_{N,\beta}
    \notag\\
    &=
    \frac{1}{N^2}
    \log
    \frac{
        \mathcal Z_{N,\beta}^{+,n,h,\alpha,\omega}
    }{
        \mathcal Z_{N,\beta}
    }.
    \label{eq:finite-volume-quenched-excess-free-energy}
\end{align}

The basic thermodynamic properties of the homogeneous wetting free energy, including the existence of the thermodynamic limit, monotonicity and convexity in the interaction parameter, and the identification of its derivative with the asymptotic contact fraction, are established in \cite[Section~2.2]{lacoin2018wetting}.
We also use the fact that changing a fixed boundary height affects the partition function only through a boundary-order term, as in \cite[Lemma~5.2]{lacoin2018wetting}.
To pass to the quenched setting, we combine these wetting arguments with the concentration and convexity arguments used for disordered SOS pinning in \cite[Proposition~2.2 and Appendix~A.1]{lacoin2021solid}.
In particular, the almost-sure and $L^1$ convergence of the random free energy, its deterministic limit, and the quenched--annealed comparison below follow by the same arguments.

\begin{proposition}[Basic properties of the quenched wetting free energy]
\label{prop:free-energy-basic-properties}
For every $\beta>0$, $\alpha\ge0$, $h\in\mathbb R$, and $n\in\mathbb Z_+$, the limit
\begin{align}
    F(\beta,\alpha,h)
    &:=
    \lim_{N\rightarrow\infty}
    F_N^{n,\omega}(\beta,\alpha,h)
    \label{eq:quenched-wetting-free-energy}
\end{align}
exists $\P$-almost surely and in $L^1(\P)$.
The limit is deterministic and is independent of the fixed boundary height $n$.

Consequently, the quenched excess free energy
\begin{align}
    \overline{F}(\beta,\alpha,h)
    &:=
    F(\beta,\alpha,h)-F(\beta)
    \notag\\
    &=
    \lim_{N\rightarrow\infty}
    \overline{F}_N^{n,\omega}(\beta,\alpha,h)
    \label{eq:quenched-excess-free-energy}
\end{align}
is well defined.

Moreover, the map
\begin{align*}
    h \longmapsto F(\beta,\alpha,h)
\end{align*}
is convex and non-decreasing.
At every point $h$ at which it is differentiable,
\begin{align}
    \partial_h F(\beta,\alpha,h)
    &=
    \lim_{N\rightarrow\infty}
    \frac{1}{N^2}
    \E\left[
        \E_{N,\beta}^{+,n,h,\alpha,\omega}
        \left[
            \sum_{x\in\Lambda_N}
            \mathbf 1_{\{\phi(x)=0\}}
        \right]
    \right].
    \label{eq:contact-fraction-derivative}
\end{align}

In the homogeneous case, we write
\begin{align}
    F(\beta,h)
    &:=
    F(\beta,0,h),
    \qquad
    \overline{F}(\beta,h)
    :=
    \overline{F}(\beta,0,h).
    \label{eq:homogeneous-free-energy}
\end{align}
Then, for every $\alpha\ge0$,
\begin{align}
    F\bigl(\beta,h-\lambda(\alpha)\bigr)
    &\le
    F(\beta,\alpha,h)
    \le
    F(\beta,h),
    \label{eq:quenched-annealed-comparison}
\end{align}
and
\begin{align}
    F(\beta,\alpha,h)
    &\ge
    F(\beta).
    \label{eq:quenched-free-energy-lower-bound}
\end{align}
Equivalently,
\begin{align}
    \overline{F}\bigl(\beta,h-\lambda(\alpha)\bigr)
    &\le
    \overline{F}(\beta,\alpha,h)
    \le
    \overline{F}(\beta,h),
    \label{eq:quenched-excess-comparison}\\
    \overline{F}(\beta,\alpha,h)
    &\ge
    0.
    \label{eq:quenched-excess-free-energy-nonnegative}
\end{align}
\end{proposition}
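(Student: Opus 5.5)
The plan follows the standard route for quenched free energies, adapted from \cite[Proposition~2.2, Appendix~A.1]{lacoin2021solid} and \cite[Section~2.2]{lacoin2018wetting}. We first establish existence of the limit of $\E[\log\mathcal Z_{N,\beta}^{+,n,h,\alpha,\omega}]/N^2$, then upgrade to almost-sure and $L^1$ convergence by concentration, and finally derive convexity, monotonicity, the derivative formula, and the comparison inequalities.

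\emph{Existence of the averaged limit.} Fix the boundary height $n$. Subdividing $\Lambda_{kN}$ into $k^2$ copies of $\Lambda_N$ gives a superadditivity bound. Restrict the sum in $\mathcal Z_{kN,\beta}^{+,n,h,\alpha,\omega}$ to configurations equal to $n$ on the grid lines separating the blocks. The Hamiltonian then splits exactly into block Hamiltonians with boundary condition $n$, with the grid-line sites contributing no interaction when $n\ge1$; for $n=0$ they contribute bounded disorder terms. Since those lines carry $O(kN\cdot k)$ sites, we obtain
\[
\log \mathcal Z_{(k(N+1)),\beta}^{+,n,h,\alpha,\omega}\ \ge\ \sum_{i=1}^{k^2}\log \mathcal Z_{\Lambda_N^{(i)},\beta}^{+,n,h,\alpha,\theta_i\omega}-C\bigl(1+|h|+\textstyle\sum_{x\in \text{grid}}|\eta_x|\bigr)k^2N .
\]
Taking $\E$ and using translation invariance of the disorder yields approximate superadditivity of $a_N:=\E\log\mathcal Z_N$. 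A trivial bound $a_N\le N^2(\log\text{const}+|h|+\E|\eta|)$, obtained by comparing with $\mathcal Z_{N,\beta}$ plus the reward, gives the limit through the usual Fekete-type argument, after interpolating between non-multiple sizes by monotone comparison of boxes. Independence of $n$ follows from \cite[Lemma~5.2]{lacoin2018wetting}. Changing the boundary from $n$ to $m$ can be realized by forcing a boundary frame of width one to interpolate, which costs $e^{O(N)}$, plus the disorder on that frame, which is $O(\sum_{\partial}|\eta_x|)=O(N)$ in $L^1$ and almost surely.

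\emph{Concentration.} $\log\mathcal Z$ is a function of the $N^2$ independent variables $\omega_x$. Its dependence on each $\omega_x$ is Lipschitz with constant $\alpha$, since $\partial_{\omega_x}\log\mathcal Z=\alpha\,\P(\phi(x)=0)\in[0,\alpha]$. Under only exponential moments we do not have Gaussian concentration directly. Instead we use the Efron--Stein inequality (variance at most $C\alpha^2 N^2$), or truncate $\omega$ at level $\pm N^{1/2}$ and apply a bounded-differences inequality. Either route gives summable tail bounds $\P\bigl(|\log\mathcal Z_N-\E\log\mathcal Z_N|>\epsilon N^2\bigr)$, hence almost-sure convergence by Borel--Cantelli; $L^1$ convergence follows from the variance bound. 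We expect this step to be the main technical point because of the unbounded disorder. If Efron--Stein only gives variance $O(N^2)$, Chebyshev yields $O(N^{-2})$ tails, which is summable, so it suffices.

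\emph{Convexity, monotonicity, derivative, comparisons.} For each $N$ and $\omega$, $h\mapsto\log\mathcal Z_N$ is convex (it is a log-Laplace transform) and non-decreasing, with derivative equal to the expected number of contacts. Pointwise limits preserve these properties. At points of differentiability, convex-function convergence of derivatives gives \eqref{eq:contact-fraction-derivative} after averaging over $\omega$, using dominated convergence with bound $1$. The upper bound in \eqref{eq:quenched-annealed-comparison} is Jensen's inequality combined with \eqref{eq:annealed-is-homogeneous}. For the lower bound, apply Jensen to the Gibbs average: $\log\mathcal Z^{\omega}\ge\log\mathcal Z^{+,n,h'}+\E^{+,n,h'}_{N}\bigl[\sum_x(\eta_x+\lambda(\alpha))\mathbf 1_{\phi(x)=0}\bigr]$ with $h'=h-\lambda(\alpha)$. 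The $\omega$-expectation of the last term vanishes, since the homogeneous measure is independent of $\omega$ and $\E\omega_x=0$. Finally, \eqref{eq:quenched-free-energy-lower-bound} follows by restricting to $\phi\ge1$, i.e.\ taking $n\ge1$ and shifting: $\mathcal Z^{+,n}\ge$ the SOS partition function on $\{\phi\ge1\}$. Entropic repulsion shows this costs only $o(N^2)$; alternatively, restricting to configurations with $\phi\ge 1$ near height $n$ large loses $o(N^2)$ as $n\to\infty$, using independence of $n$.
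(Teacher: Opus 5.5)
Your proposal is correct and follows essentially the same route as the paper. The paper's proof simply defers to the standard arguments of \cite{lacoin2018wetting} and \cite{lacoin2021solid}: superadditivity via a spatial decomposition that preserves the hard-wall constraint, a boundary-height comparison at cost $O(N)$, concentration of $\log\mathcal Z$, Jensen for the quenched--annealed bounds, and a no-contact restriction for $F(\beta,\alpha,h)\ge F(\beta)$. You fill in those cited steps validly: Efron--Stein with per-site Lipschitz constant $\alpha$, then Chebyshev and Borel--Cantelli for the almost-sure and $L^1$ convergence, and Jensen under the homogeneous Gibbs measure at $h-\lambda(\alpha)$, using $h+\eta_x^{(\alpha)}=h-\lambda(\alpha)+\alpha\omega_x$ and $\E[\omega_x]=0$, for the lower annealed bound.
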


\begin{proof}[Proof of Proposition ~\ref{prop:free-energy-basic-properties}]
The thermodynamic properties concerning monotonicity, convexity, and the contact-fraction identity follow from the homogeneous wetting argument in \cite[Section~2.2]{lacoin2018wetting}, while the independence of the limit from the fixed boundary height follows as in \cite[Lemma~5.2]{lacoin2018wetting}.

The almost-sure and $L^1(\P)$ convergence in \eqref{eq:quenched-wetting-free-energy}, the determinism of the limit, and the comparison bounds \eqref{eq:quenched-annealed-comparison}--\eqref{eq:quenched-free-energy-lower-bound} follow from the arguments in \cite[Proposition~2.2 and Appendix~A.1]{lacoin2021solid}.
The only modification is the restriction to $\Omega_\Lambda^+$, which is preserved under the spatial decompositions used there and does not affect the boundary comparison, concentration, convexity, or Jensen arguments.
For \eqref{eq:quenched-free-energy-lower-bound}, the same no-contact restriction applies, now within the hard-wall constraint.

Finally, \eqref{eq:quenched-excess-free-energy}, \eqref{eq:quenched-excess-comparison}, and \eqref{eq:quenched-excess-free-energy-nonnegative} follow directly from the definition of the excess free energy.
\end{proof}

The identity \eqref{eq:contact-fraction-derivative} identifies the derivative of the free energy with the asymptotic density of contacts with the wall.
The localization transition is therefore characterized by the positivity of the excess free energy.
We define the quenched critical point by
\begin{align}
    h_c(\beta,\alpha)
    &:=
    \inf
    \bigl\{
        h\in\mathbb R:
        \overline{F}(\beta,\alpha,h)>0
    \bigr\}.
    \label{eq:quenched-critical-point}
\end{align}
By monotonicity and non-negativity of $\overline{F}(\beta,\alpha,\cdot)$, equivalently,
\begin{align}
    h_c(\beta,\alpha)
    &=
    \sup
    \bigl\{
        h\in\mathbb R:
        \overline{F}(\beta,\alpha,h)=0
    \bigr\}.
    \label{eq:quenched-critical-point-equivalent}
\end{align}
For the homogeneous model, we define the wetting point by
\begin{align}
    h_w(\beta)
    &:=
    \inf
    \bigl\{
        h\in\mathbb R:
        \overline{F}(\beta,h)>0
    \bigr\}.
    \label{eq:homogeneous-wetting-point}
\end{align}
The comparison \eqref{eq:quenched-excess-comparison} immediately yields
\begin{align}
    h_w(\beta)
    &\le
    h_c(\beta,\alpha)
    \le
    h_w(\beta)+\lambda(\alpha).
    \label{eq:basic-critical-bounds}
\end{align}
Our goal is to sharpen this comparison in the low-temperature regime and determine the near-critical behavior of the quenched excess free energy $\overline{F}(\beta,\alpha,h)$.

\subsection{Main result}\label{subsec:2.3}

We now state the main result of the paper.
In the low-temperature regime, the homogeneous wetting point and the sharp near-critical behavior of the homogeneous wetting free energy were identified in \cite[Theorem~2.1]{lacoin2018wetting}.
In particular, for $\beta$ sufficiently large,
\begin{align}
    h_w(\beta)
    &=
    \log\left(
        \frac{e^{4\beta}}{e^{4\beta}-1}
    \right).
    \label{eq:homogeneous-wetting-point-explicit}
\end{align}
It will be convenient to write
\begin{align}
    J(\beta)
    &:=
    e^{-2\beta},
    \label{eq:J-beta}
\end{align}
and, when no confusion can arise, simply $J$ for $J(\beta)$.
Then
\begin{align}
    h_w(\beta)
    &=
    -\log\bigl(1-J^2\bigr).
    \label{eq:wetting-point-J}
\end{align}

To study the free energy near the wetting point, we introduce the shifted parameter
\begin{align}
    u
    &:=
    h-h_w(\beta).
    \label{eq:u-shift}
\end{align}
Accordingly, we write
\begin{align}
    \overline F_\beta(\alpha,u)
    &:=
    \overline F\bigl(\beta,\alpha,h_w(\beta)+u\bigr).
    \label{eq:shifted-quenched-free-energy}
\end{align}
In the homogeneous case, we simply write
\begin{align}
    \overline F_\beta(u)
    &:=
    \overline F_\beta(0,u)
    =
    \overline F\bigl(\beta,h_w(\beta)+u\bigr).
    \label{eq:shifted-homogeneous-free-energy}
\end{align}

To describe the sharp asymptotic, let
\begin{align}
    a_1(\beta)
    &:=
    \lim_{n\rightarrow\infty}
    J(\beta)^{-2n}
    \P_\beta\bigl(\phi(0)\ge n\bigr),
    \label{eq:a1-beta}\\
    a_2(\beta)
    &:=
    \lim_{n\rightarrow\infty}
    J(\beta)^{-3n}
    \P_\beta
    \left(
        \min\{\phi(0),\phi((1,0))\}\ge n
    \right).
    \label{eq:a2-beta}
\end{align}
The existence of the one-point limit $a_1(\beta)$ was established in \cite[Lemma~2.4]{caputo2016scaling}.
The existence of the two-point limit, together with the positivity and finiteness of these constants in the low-temperature regime, follows from \cite[Proposition~4.6]{lacoin2018wetting}.

Recall from \cite[Theorem~2.1]{lacoin2018wetting} the variational function governing the near-critical asymptotics of the homogeneous wetting free energy.
In our notation, For $u>0$, it is given by
\begin{align}
    \overline F_{\mathrm{homo}}(\beta,u)
    &:=
    \max_{n\in\mathbb Z_+}
    \left\{
        a_1(\beta)J^{2n}u
        -
        \frac{
            2a_2(\beta)
            \bigl(J^3-J^4\bigr)
        }{
            1-J^4
        }
        J^{3n}
    \right\}.
    \label{eq:homogeneous-wetting-variational}
\end{align}
By \cite[Theorem~2.1]{lacoin2018wetting}, the homogeneous excess free energy satisfies
\begin{align}
    \overline F_\beta(u)
    &=
    \overline F_{\mathrm{homo}}(\beta,u)
    +
    o(u^3),
    \qquad
    u\rightarrow0+.
    \label{eq:homogeneous-sharp-asymptotic}
\end{align}
In particular,
\begin{align}
    \overline F_{\mathrm{homo}}(\beta,u)
    &\asymp
    u^3,
    \qquad
    u\rightarrow0+.
    \label{eq:Fwet-order}
\end{align}
Here and below, for two positive functions $f$ and $g$, we write
\begin{align*}
    f(u)
    &\asymp
    g(u),
    \qquad
    u\rightarrow0+,
\end{align*}
if there exist constants $0<c<C<\infty$ and $u_0>0$ such that
\begin{align*}
    c\,g(u)
    &\le
    f(u)
    \le
    C\,g(u),
    \qquad
    u\in(0,u_0].
\end{align*}
Unless stated otherwise, constants may depend on the fixed parameters of the model and on the law of the disorder.
A subscript indicates the parameters on which a constant is allowed to depend; in particular, $C_\beta$ is independent of $\alpha$, while $C_{\alpha,\beta}$ may depend on the fixed disorder strength $\alpha$.
None of these constants depends on the volume parameter $N$ unless explicitly stated otherwise.

\begin{maintheorem}
\label{thm:main-result}
There exists $\beta_0<\infty$ such that, for every $\beta>\beta_0$ and every fixed $\alpha\ge0$, the quenched wetting critical point coincides with the homogeneous wetting critical point:
\begin{align}
    h_c(\beta,\alpha)
    &=
    h_w(\beta)
    =
    -\log\bigl(1-J(\beta)^2\bigr).
    \label{eq:main-critical-point}
\end{align}
Moreover,
\begin{align}
    \overline F_\beta(\alpha,u)
    &=
    \overline F_{\mathrm{homo}}(\beta,u)
    +
    o(u^3),
    \qquad
    u\rightarrow0+.
    \label{eq:main-free-energy-asymptotic}
\end{align}
Consequently,
\begin{align}
    \overline F_\beta(\alpha,u)
    &=
    \overline F_\beta(u)
    +
    o(u^3),
    \qquad
    u\rightarrow0+.
    \label{eq:quenched-homogeneous-asymptotic}
\end{align}
\end{maintheorem}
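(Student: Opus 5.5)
The upper bound is free. By the annealed comparison \eqref{eq:quenched-excess-comparison}, $\overline F_\beta(\alpha,u)\le \overline F_\beta(u)$. Combined with \eqref{eq:homogeneous-sharp-asymptotic}, this gives $\overline F_\beta(\alpha,u)\le \overline F_{\mathrm{homo}}(\beta,u)+o(u^3)$. Moreover $\overline F_\beta(\alpha,u)=0$ for $u\le 0$, so $h_c(\beta,\alpha)\ge h_w(\beta)$, which is already contained in \eqref{eq:basic-critical-bounds}. Everything therefore reduces to the matching quenched lower bound
\begin{align*}
    \liminf_{N\rightarrow\infty}\frac{1}{N^2}\E\left[\log\frac{\mathcal Z_{N,\beta}^{+,n,h,\alpha,\omega}}{\mathcal Z_{N,\beta}}\right]
    \ge
    a_1J^{2n}u-\frac{2a_2(J^3-J^4)}{1-J^4}J^{3n}-o(u^3),
\end{align*}
uniformly over $n$ in the window $J^n\asymp u$, where the maximizer of \eqref{eq:homogeneous-wetting-variational} lies. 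Since the right-hand side is positive for small $u>0$, this bound also yields $h_c(\beta,\alpha)\le h_w(\beta)$, and both \eqref{eq:main-critical-point} and \eqref{eq:main-free-energy-asymptotic} follow.

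For the lower bound, I would first remove the hard wall. I expand $\mathbf 1_{\{\phi\ge0\}}$ together with the reward $e^{(h+\eta_x)\mathbf 1_{\{\phi(x)=0\}}}$ and rewrite the ratio as an expectation under the unconstrained SOS law with boundary height $n$. The weight then becomes a product over connected components $C$ of $\{\phi\le 0\}$. Each component contributes a factor that depends on $h+\eta$ on $C$ and on the boundary of $C$. For a singleton the factor is $\approx e^{h+\eta_x}(1-J^2)$, i.e. $e^{u+\eta_x}$ to leading order. A nearest-neighbour pair contributes the negative correction that produces the $J^3$-type coefficient $2(J^3-J^4)/(1-J^4)$, as in \cite{lacoin2018wetting}. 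After reflection ($\phi\mapsto n-\phi$), contacts become rare excursions to height $\ge n$. The one-point and two-point probabilities are then $a_1J^{2n}$ and $a_2J^{3n}$. Components of size $\ge 3$ have probability $O(J^{4n})=o(u^3)$ per site, so they can be bounded crudely.

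Next I would coarse-grain. I fix a mesoscopic scale $\ell$ with $1\ll\ell\ll u^{-\kappa}$ and restrict to configurations with no long contours; by Peierls this costs $e^{-c\beta\ell}N^2\ll N^2u^3$. I then condition on the external contours crossing the grid lines. This makes the field conditionally independent over cells and contour regions, so the expected logarithm splits as a sum over regions. Inside each cell core I use Theorem~\ref{thm:exponential-convergence-SOS}, via a local comparison, to replace the conditioned law by $\P_\beta$ up to $e^{-c\beta\ell}$. I keep singletons and pairs sharply and lower bound everything else by occupied-edge penalties. The cells then contribute $\ell^2\bigl(a_1J^{2n}u-\tfrac{2a_2(J^3-J^4)}{1-J^4}J^{3n}\bigr)(1+o(1))$ on average.

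Two points remain. The first is disorder, which I would handle cell by cell. Jensen gives $\E\log Z_{\mathrm{cell}}\ge \E\log(1+X)$, where $X$ is a sum of weakly dependent terms of the form $(e^{\eta_x}-1)\mathbf 1_{\{\text{peak at }x\}}$, each of size $J^{2n}\asymp u^2$. Using $\log(1+X)\ge X-X^2$ together with the fact that $\E[e^{\eta}]=1$, the disorder enters only at second order: $\E X^2\lesssim \ell^2 u^2\cdot u^2=\ell^2u^4=o(\ell^2u^3)$. This is where wetting differs from pinning, since the reward only acts on rare peaks. The second, and in my view the main obstacle, is that components of the contact set straddle cell boundaries and contour regions, which breaks the factorization. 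I would treat it as follows. In a boundary strip of width $\ell^{\epsilon}$ around each cell, and in the contour regions, I discard the positive rewards entirely. I bound the negative contributions of components touching these sets by a rough occupied-edge count, whose expectation is $O(J^{3n})$ per edge. Because the strips and contour regions occupy a fraction $o(1)$ of the volume, this gives an error of $o(N^2u^3)$. Finally, letting $N\rightarrow\infty$, then $\ell\rightarrow\infty$ slowly as $u\rightarrow0$, and optimizing over $n$ completes the lower bound.
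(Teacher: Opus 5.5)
Your reduction of the theorem to the quenched lower bound follows the paper closely, and so does your outline of that lower bound: hard-wall removal and reflection, the window $J^n\asymp u$, a short-contour restriction, conditioning on grid-crossing external contours, local comparison with $\P_\beta$ in cell cores, sharp singletons and pairs, occupied-edge penalties for all other components, and a centered second-moment bound for the disorder. The disorder step, however, has a genuine gap.

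Written exactly, the cell partition function is $A\,X^\omega$ with $X^\omega=\mu^{\mathrm{det}}\bigl[\exp\bigl\{\sum_x\eta_x^{(\alpha)}\mathbf 1_{\{\phi(x)\le0\}}\bigr\}\bigr]$. Here $X^\omega-1$ is not a sum of single-peak terms; it contains products over the whole contact set. Moreover $\E[(X^\omega-1)^2]=(\mu^{\mathrm{det}})^{\otimes2}\bigl[e^{\Delta_\alpha\mathcal R_{12}}\bigr]-1$, where $\Delta_\alpha=\lambda(2\alpha)-2\lambda(\alpha)>0$ is fixed (and arbitrarily large as $\alpha$ grows), and $\mathcal R_{12}$ is the number of common contacts of two replicas. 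Your bound $\E X^2\lesssim\ell^2u^4$ follows only if $e^{\Delta_\alpha\mathcal R_{12}}-1\lesssim\mathcal R_{12}$, i.e.\ if the overlap is bounded. Otherwise your only control of large overlaps is the multipoint estimate $C_{\beta,k}e^{-3\beta n\sqrt k}$ per replica. Against the $\binom{\ell^2}{k}$ choices of sites and the factor $e^{\Delta_\alpha k}$, this yields nothing for large $k$. The Jensen step you invoke does not produce a linear $X$ either; a linear expansion is exact only after limiting the number of contacts.

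The missing ingredient is a contact truncation:
\begin{itemize}
\item insert $\mathbf 1\bigl\{\sum_{x\in R}\mathbf 1_{\{\phi(x)\le0\}}\le k_0\bigr\}$, with $k_0$ fixed, in every cell and every contour region $R$ (this only decreases the weight);
\item pay at most $C\ell^{2k_0}e^{-3\beta n\sqrt{k_0}}$ for it via the multipoint estimate;
\item then use $X^\omega\ge1/2$, $\log t\ge(t-1)-(t-1)^2$ and $e^{\Delta_\alpha\mathcal R_{12}}-1\le C_{\alpha,k_0}\mathcal R_{12}$ to get $\E\log X^\omega\ge-C\sum_x\mu^{\mathrm{det}}(\phi(x)\le0)^2\ge-C\ell^2J^{4n}$.
\end{itemize}
This is the role of the paper's $D_z^\kappa$ with $\kappa=100$. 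It is also the main reason $\ell$ must be at most a small power of $u^{-1}$.

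Two further points need repair.

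First, the scales. The cost $e^{-c\beta\ell}$ you quote corresponds to a contour cutoff of the order of the cell size. In that case grid-crossing contours reach deep into cells, and your cores, at distance $\ell^{\epsilon}$ from the grid, are no longer shielded. You need three separated scales tied to $u$:
\begin{itemize}
\item a contour cutoff $L\gg\log(1/u)$, so that $e^{-cL}=o(u^3)$;
\item a core buffer much larger than $L$, which also makes the local-comparison error $o(u^3)$;
\item a cell size much larger than the buffer but at most a small power of $u^{-1}$.
\end{itemize}
The paper takes $L=n^4$, buffer $4n^4$ and $M=\lfloor u^{-\chi}\rfloor$ with $\chi<1/50$. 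In particular, $\ell$ cannot simply be sent to infinity ``slowly''.

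Second, in the boundary strips and contour regions you may drop the $u$-reward but not the disorder. The variable $\eta_x^{(\alpha)}$ is not bounded below in general, and a first-order (Jensen) treatment of it loses order $J^{2n}\asymp u^2$ per site. These sets have density only of order $n^4/M\asymp|\log u|^4u^{\chi}$, which is not $o(u)$, so that loss is not $o(u^3)$. Keep each $\eta_x^{(\alpha)}$ in the region containing $x$ (the regions partition $\Lambda_N$), and run the centered, truncated second-moment argument in every region. The only term that then fails to factorize is the cross-region occupied-edge count. The paper handles it by Jensen under the product $\mu_\Gamma^{\omega,\mathrm{mix}}$ of the disorder-tilted local measures. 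The two endpoints of a cross edge lie in different regions with independent disorder, so each cross edge costs $O(J^{4n})$.
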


The upper bound required for \eqref{eq:main-free-energy-asymptotic} follows from the annealed comparison and the homogeneous asymptotic \eqref{eq:homogeneous-sharp-asymptotic}.
Indeed,
\begin{align}
    \overline F_\beta(\alpha,u)
    &\le
    \overline F_\beta(u)
    =
    \overline F_{\mathrm{homo}}(\beta,u)
    +
    o(u^3).
\end{align}
Thus the main task is to establish the matching quenched lower bound in Proposition~\ref{prop:sharp-lower-bound}; see \eqref{eq:sharp-quenched-lower-bound}.
Similarly, the annealed comparison gives
\begin{align}
    h_c(\beta,\alpha)
    &\ge
    h_w(\beta),
\end{align}
whereas \eqref{eq:sharp-quenched-lower-bound}, together with \eqref{eq:Fwet-order}, implies
\begin{align}
    \overline F_\beta(\alpha,u)
    &>
    0
\end{align}
for all sufficiently small $u>0$.
Hence
\begin{align}
    h_c(\beta,\alpha)
    &\le
    h_w(\beta),
\end{align}
which yields \eqref{eq:main-critical-point}.

\begin{remark}
The independence of $h_c(\beta,\alpha)$ from $\alpha$ is a consequence of the centered normalization of the disorder.
If the interaction is instead written in the uncentered form
\begin{align*}
    h+\alpha\omega_x,
\end{align*}
then the corresponding critical point is
\begin{align}
    h_w(\beta)-\lambda(\alpha),
\end{align}
and therefore depends on $\alpha$.
\end{remark}

\subsection{Proof strategy and technical difficulties}
\label{subsec:2.4}

The upper bound in Theorem~\ref{thm:main-result} follows directly from the annealed comparison and the sharp homogeneous asymptotic \eqref{eq:homogeneous-sharp-asymptotic}.
Thus, the main task is to establish the matching quenched lower bound, stated in Proposition~\ref{prop:sharp-lower-bound}.

We first remove the hard-wall constraint using the alternative representation developed in Subsection~\ref{subsec:3.1}; see Lemma~\ref{lemma:alternative-representation-disordered}.
The resulting interaction is expressed in terms of the non-positive level set $\phi^{-1}(\mathbb Z_-)$ and the shifted wall penalty $\overline H$, whose basic properties are summarized in Lemma~\ref{lemma:Hbar-basic-properties}.
A change of boundary condition followed by reflection then turns the problem into one concerning high-level excursions of the zero-boundary SOS field; see Lemma~\ref{lemma:change-boundary-disordered}.

The homogeneous variational formula identifies the relevant localization heights.
More precisely, Lemma~\ref{lem:localization-window} in Subsection~\ref{subsec:4.1} shows that the maximizing heights satisfy
\begin{align}
    J^n \asymp u,
\end{align}
or equivalently
\begin{align}
    n
    =
    \frac{1}{2\beta}\log\frac1u
    +
    O_\beta(1);
\end{align}
see also \eqref{eq:localization-height-scale}.

For such a height $n$, we restrict the SOS field to configurations without long contours.
By Lemma~\ref{lem:good-event} and \eqref{eq:restriction-to-good-event}, the cost of this restriction is $o(u^3)$ at the level of the free energy.
We then introduce the mesoscopic scale
\begin{align}
    M=\lfloor u^{-\chi}\rfloor,
    \qquad
    0<\chi<\frac1{50},
\end{align}
as in \eqref{eq:coarse-graining-scale}, and condition on the external contours crossing the coarse-graining grid; see Subsection~\ref{subsec:4.2}.
This decomposes the box into regions over which the underlying SOS field is conditionally independent, as expressed in \eqref{eq:conditional-factorization}.

At this point the main technical difficulty appears.
In the disordered pinning model of \cite{lacoin2021solid}, the interaction is sitewise.
Hence, if the coarse-graining produces a partition
\begin{align}
    \Lambda_N
    =
    R_1\dot\cup\cdots\dot\cup R_m,
\end{align}
then the interaction weight factorizes pointwise:
\begin{align}
    \exp\left\{
        \sum_{x\in\Lambda_N}
        \bigl(u+\eta_x^{(\alpha)}\bigr)
        \mathbf 1_{\{\phi(x)=0\}}
    \right\}
    =
    \prod_{i=1}^m
    \exp\left\{
        \sum_{x\in R_i}
        \bigl(u+\eta_x^{(\alpha)}\bigr)
        \mathbf 1_{\{\phi(x)=0\}}
    \right\}.
    \label{eq:proof-strategy-pinning-factorization}
\end{align}
Together with the conditional independence of the SOS field, this is the factorization mechanism used in \cite[Equations~(4.6)--(4.7)]{lacoin2021solid}.

For the wetting model, however, the shifted wall penalty depends on the connected components of
\begin{align}
    A(\phi):=\phi^{-1}(\mathbb Z_-).
\end{align}
Although $\overline H$ is additive over distinct connected components, it is not in general additive under the spatial partition produced by the coarse-graining:
\begin{align}
    \overline H_\beta(A(\phi))
    \neq
    \sum_{i=1}^m
    \overline H_\beta\bigl(A(\phi)\cap R_i\bigr)
\end{align}
in general, since a single connected component of $A(\phi)$ may intersect several regions.
Thus the SOS measure factorizes, but the wetting interaction does not factorize pointwise.

This obstruction is resolved in Subsection~\ref{subsec:4.4}.
Lemma~\ref{lem:sharp-local-coarse-graining} replaces the non-factorized wall interaction by a pointwise lower bound consisting of local cell contributions together with lower-order cross-region terms.
Combining this decomposition with the conditional factorization yields Proposition~\ref{prop:coarse-grained-lower-bound-decomposition}.

The remaining estimates are carried out in Section~5.
The local comparison in Lemma~\ref{lem:local-comparison-conditioned-cell} and the peak estimates of Proposition~\ref{prop:conditioned-cell-peak-estimates} recover the sharp coefficients $a_1(\beta)$ and $a_2(\beta)$ after conditioning.
These are converted into cluster-count estimates in Lemma~\ref{lem:sharp-cluster-counts-cell}.
The resulting sharp cell contribution is obtained in Lemma~\ref{lem:sharp-deterministic-cell-expansion}, while Lemma~\ref{lem:disorder-correction-sharp-cell} shows that the centered disorder contributes only a lower-order correction.
Finally, Lemma~\ref{lem:contour-region-cross-edge-errors} controls the remaining contour-region and cross-region terms.

Combining these estimates gives, uniformly over the localization window,
\begin{align}
    \overline F_\beta(\alpha,u)
    \ge
    a_1(\beta)uJ^{2n}
    -
    2a_2(\beta)\frac{J^3-J^4}{1-J^4}J^{3n}
    -
    o(u^3).
\end{align}
Optimizing over $n$ as in Lemma~\ref{lem:localization-window} yields
\begin{align}
    \overline F_\beta(\alpha,u)
    \ge
    \overline F_{\mathrm{homo}}(\beta,u)
    -
    o(u^3),
\end{align}
which proves Proposition~\ref{prop:sharp-lower-bound}.

\subsection{Further questions}\label{subsec:2.5}

Our main result shows that, at the level of the leading near-critical asymptotics, the quenched disorder does not modify the behavior of the homogeneous wetting model.
More precisely, by \eqref{eq:main-free-energy-asymptotic},
\begin{align}
    \overline F_\beta(\alpha,u)
    =
    \overline F_{\mathrm{homo}}(\beta,u)
    +
    o(u^3),
    \qquad
    u\rightarrow 0+,
\end{align}
where the leading term is given by the optimization over the integer localization height in \eqref{eq:homogeneous-wetting-variational}.
This raises the question of whether the resemblance with the homogeneous model persists at the finer scale at which individual layering transitions can be resolved.

For the homogeneous SOS wetting model, this question is by now well understood.
In \cite{lacoin2020wetting}, it was proved that, for $\beta$ sufficiently large, there exists a decreasing sequence $(h_n^*(\beta))_{n\geq 1}$ converging to $h_w(\beta)$ at which the homogeneous free energy is not differentiable.
Between two consecutive transition points, there is a unique translation-invariant Gibbs state localized around a fixed integer height, whereas at $h_n^*(\beta)$ at least two ergodic Gibbs states coexist, with respective typical heights $n-1$ and $n$.
Thus the changes of the preferred localization height suggested by the variational picture of \cite{lacoin2018wetting} correspond, in the homogeneous model, to genuine first-order layering transitions.

It is not clear that the same picture should persist in the presence of quenched disorder.
In our model the random variable $\eta_x^{(\alpha)}$ is coupled directly to the local contact observable, and, by \eqref{eq:contact-fraction-derivative}, the derivative of the quenched free energy with respect to $h$ is precisely the asymptotic contact density.
The rounding mechanism of Aizenman and Wehr \cite{aizenman1990rounding} therefore suggests a different scenario.
Their general result shows that, in dimension two, quenched randomness can eliminate first-order discontinuities in thermodynamic observables conjugate to the randomized parameter.
This suggests that, for fixed $\alpha>0$, the jump in the contact fraction associated with the homogeneous layering transitions may be rounded, so that the sharp first-order transitions are replaced by smoother crossovers.

We emphasize, however, that the result of \cite{aizenman1990rounding} does not apply directly to the present SOS model.
In particular, their general framework is formulated for bounded single-site state spaces, whereas the SOS height variable is unbounded.
Establishing an appropriate rounding result in the present setting would therefore require additional control of the infinite-volume Gibbs states.
It would be particularly interesting to determine whether, for every fixed $\alpha>0$, the quenched free energy is differentiable at the points corresponding to the homogeneous layering transitions.

Such a rounding scenario is not in contradiction with \eqref{eq:main-free-energy-asymptotic}.
Indeed, the latter determines the free energy only up to an $o(u^3)$ error and hence does not resolve the much finer structure in a small neighborhood of an individual crossing between two competing localization heights.
Disorder may therefore be invisible at the leading $u^3$ scale while still playing a decisive role at the finer scale at which a single layering transition is observed.

A second, closely related question concerns the actual height of the interface under the quenched Gibbs measure.
Let $n_u$ be a maximizer of the homogeneous variational problem.
By Lemma~\ref{lem:localization-window} and \eqref{eq:localization-height-scale},
\begin{align}
    J^{n_u}
    \asymp
    u,
    \qquad
    n_u
    =
    \frac{1}{2\beta}\log\frac1u
    +
    O_\beta(1).
\end{align}
Since the quenched free energy has the same leading asymptotic as the homogeneous one, it is natural to expect that, away from the narrow regions where two neighboring variational branches compete, the quenched interface is also localized around the corresponding height $n_u$.
Proving such a statement would give a probabilistic interpretation of the variational optimizer and would complement the free-energy result of Theorem~\ref{thm:main-result}.

If the layering transitions are indeed rounded by disorder, a further question is to describe the transition region in which the preferred height changes from $n$ to $n-1$.
In particular, one would like to understand the width of this crossover, the distribution of the interface height inside it, and the possible structure of the corresponding quenched Gibbs states.
This would clarify how the sharp layering picture of the homogeneous model is modified by spatial disorder, despite the fact that the leading free-energy asymptotics remains unchanged.

It would also be interesting to determine at which order beyond $u^3$ the dependence on the disorder first appears, and whether the same picture persists for other planar lattices or in higher dimensions.
\section{Preliminaries}\label{chap:3}

\subsection{Alternative representation and wall penalty}
\label{subsec:3.1}

We begin with the reformulation of the homogeneous wetting problem developed in Subsection~3.1 of \cite{lacoin2018wetting}.
The same representation remains valid in the present disordered setting after replacing the homogeneous contact reward by a site-dependent random reward.
Its purpose is to remove the positivity constraint and rewrite the wetting problem in terms of visits of an unconstrained SOS field to the non-positive half-plane.
A subsequent change of boundary condition will then turn these visits into high-level excursions of the unconstrained SOS field.

Set
\begin{align}
    \mathbb Z_-
    &:=
    \mathbb Z\cap(-\infty,0].
    \label{eq:Z-minus}
\end{align}
For a finite set $\Gamma\subset\mathbb Z^2$, define
\begin{align}
    \mathcal Z_{\Gamma,\beta}^{+}
    &:=
    \sum_{\psi\in\Omega_\Gamma^+}
    \exp\bigl\{-\beta\mathcal H_\Gamma(\psi)\bigr\},
    \qquad
    H_{\beta}(\Gamma)
    :=
    \log\mathcal Z_{\Gamma,\beta}^{+}.
    \label{eq:wall-partition-H}
\end{align}
We use the convention $H_{\beta}(\varnothing)=0$.
Since our analysis is carried out near the homogeneous wetting point, we introduce the shifted wall penalty
\begin{align}
    \overline H_{\beta}(\Gamma)
    &:=
    H_{\beta}(\Gamma)
    -
    h_w(\beta)|\Gamma|.
    \label{eq:Hbar-def}
\end{align}

The first lemma removes the hard-wall constraint and expresses the disordered wetting partition function as an expectation under the unconstrained SOS measure.
It is the disordered wetting counterpart of Lemma~3.1 and the shifted representation~(3.11) in \cite{lacoin2018wetting}.
The only modification is that the homogeneous reward attached to $\phi^{-1}(\mathbb Z_-)$ is replaced by the site-dependent reward $u+\eta_x^{(\alpha)}$.

\begin{lemma}[Alternative representation for disordered wetting; Adapted from \ \cite{lacoin2018wetting}, Lemma~3.1 and (3.11)]
\label{lemma:alternative-representation-disordered}
For every finite $\Lambda\subset\mathbb Z^2$, every $u\in\mathbb R$, and every realization of the disorder,
\begin{align}
    \frac{
        \mathcal Z_{\Lambda,\beta}^{+,h_w(\beta)+u,\alpha,\omega}
    }{
        \mathcal Z_{\Lambda,\beta}
    }
    &=
    \E_{\Lambda,\beta}
    \left[
        \exp\left\{
            \sum_{x\in\phi^{-1}(\mathbb Z_-)}
            \bigl(u+\eta_x^{(\alpha)}\bigr)
            -
            \overline H_{\beta}\bigl(\phi^{-1}(\mathbb Z_-)\bigr)
        \right\}
    \right].
    \label{eq:alternative-representation-disordered}
\end{align}
\end{lemma}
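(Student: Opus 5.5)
The plan is to prove the identity by a direct configuration-wise decomposition: split every unconstrained configuration $\phi\in\Omega_\Lambda$ into its positive part and its non-positive part, and show that the SOS energy splits additively between the two. Summing over the non-positive part on a fixed set then produces exactly $\mathcal Z^+_{A,\beta}=e^{H_\beta(A)}$. The disorder plays no role here: it only enters through the sitewise weight $u+\eta_x^{(\alpha)}$, which is a function of the set $A=\phi^{-1}(\mathbb Z_-)$ and the site. So the argument is the same as for the homogeneous model, with $h$ replaced sitewise.

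\emph{Step 1: energy splitting.} Fix $\phi\in\Omega_\Lambda$ and set $A:=\phi^{-1}(\mathbb Z_-)$, $\psi:=\max(\phi,0)\in\Omega^+_\Lambda$ and $\rho:=(-\phi)|_A\in\Omega_A^+$. Then $\psi^{-1}(0)=A$. We check edge by edge (boundary condition $0$ throughout) that
\begin{align*}
    \mathcal H_\Lambda(\phi)=\mathcal H_\Lambda(\psi)+\mathcal H_A(\rho),
\end{align*}
where $\mathcal H_A$ is the Hamiltonian on $A$ with zero boundary condition on $\partial A$.
\begin{itemize}
    \item For an edge with both endpoints outside $A$ (including edges from $\Lambda\setminus A$ to $\partial\Lambda$), $\phi=\psi$ on both endpoints, and $\rho$ does not see the edge.
    \item For an edge inside $A$, $|\phi(x)-\phi(y)|=|\rho(x)-\rho(y)|$, while $\psi\equiv0$ there.
    \item For $x\in A$ and $y\sim x$ with $y\in\Lambda\setminus A$, we have $\phi(y)\ge1>0\ge\phi(x)$. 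Hence $|\phi(y)-\phi(x)|=\phi(y)+|\phi(x)|=|\psi(y)-\psi(x)|+|\rho(x)-0|$. The second summand is the boundary term of $\mathcal H_A$, since $y\in\partial A$.
    \item For $x\in A$ and $y\in\partial\Lambda$, $|\phi(x)-0|=\rho(x)$ and $\psi(x)=0$. Note $y\in\partial A$ as well.
\end{itemize}
Conversely, the map $\phi\mapsto(\psi,\rho)$ is a bijection from $\Omega_\Lambda$ onto the pairs with $\psi\in\Omega^+_\Lambda$ and $\rho\in\Omega^+_{\psi^{-1}(0)}$, with inverse $\phi=\psi-\rho\mathbf 1_{A}$.

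\emph{Step 2: summation.} Multiply the integrand on the right of \eqref{eq:alternative-representation-disordered} by $\mathcal Z_{\Lambda,\beta}$ and write it as a sum over $\phi$. Group the terms according to $\psi$ and sum over $\rho$ first. For fixed $\psi$ with $A=\psi^{-1}(0)$ this sum equals $\sum_{\rho\in\Omega^+_A}e^{-\beta\mathcal H_A(\rho)}=e^{H_\beta(A)}$. This cancels against the factor $e^{-\overline H_\beta(A)}$ and leaves $e^{h_w(\beta)|A|}$. What remains is
\begin{align*}
    \sum_{\psi\in\Omega^+_\Lambda}\exp\Bigl\{-\beta\mathcal H_\Lambda(\psi)+\sum_{x\in\Lambda}\bigl(h_w(\beta)+u+\eta^{(\alpha)}_x\bigr)\mathbf 1_{\{\psi(x)=0\}}\Bigr\}
    =\mathcal Z^{+,h_w(\beta)+u,\alpha,\omega}_{\Lambda,\beta}.
\end{align*}
Dividing by $\mathcal Z_{\Lambda,\beta}$ gives the claim. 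The only step requiring care is the boundary-edge bookkeeping in Step 1, namely that the edges from $A$ to both $\Lambda\setminus A$ and $\partial\Lambda$ are exactly the boundary terms of $\mathcal H_A$ with zero boundary condition. This is why the strict positivity $\phi\ge1$ off $A$ is essential.
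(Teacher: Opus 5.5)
Your proof is correct and is the same positive/negative-part decomposition that the paper invokes by citing Lacoin's Lemma~3.1 and the shift leading to (3.11). The paper only cites that argument, while you write it out: the edge-by-edge splitting $\mathcal H_\Lambda(\phi)=\mathcal H_\Lambda(\psi)+\mathcal H_A(\rho)$, the bijection $\phi\leftrightarrow(\psi,\rho)$, and the summation over $\rho$ that produces $e^{H_\beta(A)}$ (the convention $H_\beta(\varnothing)=0$ covers $A=\varnothing$, and all summands are nonnegative, so the regrouping is legitimate), with the disorder entering only as a sitewise weight on $A$.
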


\begin{proof}[Proof of Lemma ~\ref{lemma:alternative-representation-disordered}]
The proof follows the positive/negative-part decomposition in \cite[Lemma~3.1]{lacoin2018wetting}, together with the shift leading to \cite[(3.11)]{lacoin2018wetting}.
The only change is the replacement
\begin{align*}
    u\bigl|\phi^{-1}(\mathbb Z_-)\bigr|
    \qquad\text{by}\qquad
    \sum_{x\in\phi^{-1}(\mathbb Z_-)}
    \bigl(u+\eta_x^{(\alpha)}\bigr).
\end{align*}
Since the additional disorder term depends only on the set $\phi^{-1}(\mathbb Z_-)$, it does not affect the positive/negative-part decomposition.
\end{proof}

To use Lemma~\ref{lemma:alternative-representation-disordered} sharply, we need to distinguish isolated visits from connected clusters containing several sites.
Isolated visits produce the leading positive contribution to the free energy, while nearest-neighbor pairs produce the first non-trivial deterministic wall penalty.
The following result is the shifted formulation of the wall-penalty estimates in \cite[Lemma~3.2]{lacoin2018wetting}, together with the additivity property stated immediately before that lemma.
It will allow us later to retain the singleton and pair contributions sharply and control all larger clusters by a rough linear bound.

We call two finite sets $\Gamma_1,\Gamma_2\subset\mathbb Z^2$ separated if they are disjoint and there is no nearest-neighbor edge joining them.

\begin{lemma}[Wall penalty in the wetting representation; Adapted from \cite{lacoin2018wetting}, Lemma~3.2]
\label{lemma:Hbar-basic-properties}
Assume that $\beta$ is sufficiently large.
If $\Gamma_1$ and $\Gamma_2$ are separated, then
\begin{align}
    \overline H_{\beta}(\Gamma_1\cup\Gamma_2)
    &=
    \overline H_{\beta}(\Gamma_1)
    +
    \overline H_{\beta}(\Gamma_2).
    \label{eq:Hbar-additivity}
\end{align}
Consequently, $\overline H_{\beta}(\Gamma)$ is additive over the connected components of $\Gamma$.

Moreover, for every $x\in\mathbb Z^2$,
\begin{align}
    \overline H_{\beta}(\{x\})
    &=
    0.
    \label{eq:Hbar-singleton}
\end{align}
If $x\sim y$, then
\begin{align}
    \overline H_{\beta}(\{x,y\})
    &=
    \log\left(
        \frac{1-J(\beta)^4}{1-J(\beta)^3}
    \right).
    \label{eq:Hbar-doubleton}
\end{align}
We set
\begin{align}
    \ell_2(\beta)
    &:=
    \log\left(
        \frac{1-J(\beta)^4}{1-J(\beta)^3}
    \right).
    \label{eq:ell2-def}
\end{align}
Finally, there exist constants $c_1(\beta),c_2(\beta)>0$ such that, for every finite connected set $\Gamma\subset\mathbb Z^2$ with $|\Gamma|\ge2$,
\begin{align}
    c_1(\beta)|\Gamma|
    &\le
    \overline H_{\beta}(\Gamma)
    \le
    c_2(\beta)|\Gamma|.
    \label{eq:Hbar-linear-bound}
\end{align}
\end{lemma}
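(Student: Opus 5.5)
The plan is to compute everything directly from the definition $H_\beta(\Gamma)=\log\mathcal Z^+_{\Gamma,\beta}$, where the boundary height is $0$, so every edge from $\Gamma$ to $\partial\Gamma$ contributes $|\psi(x)|=\psi(x)$. Most of the statements are exact and hold for every $\beta>0$; "$\beta$ large" is only needed to match the lemma's conventions.

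\emph{Additivity.} Suppose $\Gamma_1,\Gamma_2$ are separated.
\begin{itemize}
\item No edge of $\mathbb Z^2$ joins $\Gamma_1$ and $\Gamma_2$, so every internal edge of $\Gamma_1\cup\Gamma_2$ lies inside $\Gamma_1$ or inside $\Gamma_2$.
\item For $x\in\Gamma_1$, every neighbour outside $\Gamma_1$ also lies outside $\Gamma_1\cup\Gamma_2$. Hence the boundary edges of $\Gamma_1$ in the union are exactly its boundary edges on its own, and similarly for $\Gamma_2$.
\end{itemize}
Therefore $\mathcal H_{\Gamma_1\cup\Gamma_2}(\psi)=\mathcal H_{\Gamma_1}(\psi|_{\Gamma_1})+\mathcal H_{\Gamma_2}(\psi|_{\Gamma_2})$ identically. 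The positivity constraint is a product constraint, so $\mathcal Z^+$ factorizes. Since $|\Gamma|$ is additive, \eqref{eq:Hbar-additivity} follows. Iterating over connected components gives the consequence.

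\emph{Singleton and doubleton.}
\begin{itemize}
\item For $\Gamma=\{x\}$ we have $\mathcal H(\psi)=4\psi(x)$, so $\mathcal Z^+=\sum_{k\ge0}J^{2k}=(1-J^2)^{-1}$. This gives $H_\beta(\{x\})=h_w(\beta)$ by \eqref{eq:wetting-point-J}, hence \eqref{eq:Hbar-singleton}.
\item For $\{x,y\}$ with $x\sim y$ and heights $(a,b)$, the energy is $3a+3b+|a-b|$. The diagonal $a=b=k$ contributes $\sum_k J^{3k}=(1-J^3)^{-1}$.
\item Writing $(a,b)=(k,k+m)$ or $(k+m,k)$ with $m\ge1$, the off-diagonal terms contribute $2(1-J^3)^{-1}J^2(1-J^2)^{-1}$.
\item Summing, $\mathcal Z^+=(1+J^2)\big/\bigl((1-J^3)(1-J^2)\bigr)$. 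Subtracting $2h_w=-2\log(1-J^2)$ gives $\log\bigl((1-J^4)/(1-J^3)\bigr)$, which is \eqref{eq:Hbar-doubleton}.
\end{itemize}

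\emph{Lower linear bound.} The key identity is $|a-b|=a+b-2\min(a,b)$ for $a,b\ge0$. Each site of $\Gamma$ has exactly four incident edges (internal or boundary), so
\begin{align*}
\mathcal H_\Gamma(\psi)=4\sum_{x\in\Gamma}\psi(x)-2\sum_{\{x,y\}\subset\Gamma,\,x\sim y}\min(\psi(x),\psi(y)).
\end{align*}
Hence $\overline H_\beta(\Gamma)=\log \mathbf E\bigl[\exp\{2\beta\sum_{e}\min(\psi(x),\psi(y))\}\bigr]$. Here, under $\mathbf E$, the $\psi(x)$ are i.i.d.\ geometric with $\mathbf P(\psi(x)\ge k)=J^{2k}$.

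By Jensen, $\overline H_\beta(\Gamma)\ge 2\beta\,|E(\Gamma)|\,\mathbf E[\min]$, where
\begin{align*}
\mathbf E[\min]=\sum_{k\ge1}J^{4k}=J^4/(1-J^4).
\end{align*}
If $\Gamma$ is connected with $|\Gamma|\ge2$, then $|E(\Gamma)|\ge|\Gamma|-1\ge|\Gamma|/2$. This gives $c_1(\beta)=\beta J^4/(1-J^4)$. The same identity also reproves nonnegativity of $\overline H_\beta$, which is a useful consistency check.

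\emph{Upper linear bound.} This is the only step needing a small idea, since the exponential moment above is not obviously finite uniformly. I would simply drop the positivity constraint, $\mathcal Z^+_\Gamma\le\mathcal Z_\Gamma$, and bound $\mathcal Z_\Gamma$ by a spanning-forest argument.
\begin{itemize}
\item Since $\Gamma$ is finite, every site is joined to $\partial\Gamma$ through a path in $\Gamma\cup\partial\Gamma$.
\item Choose a spanning forest rooted at $\partial\Gamma$ and assign to each $x\in\Gamma$ the distinct edge to its parent.
\item Dropping all other (nonnegative) terms gives $\mathcal H_\Gamma(\psi)\ge\sum_{x\in\Gamma}|\psi(x)-\psi(\mathrm{parent}(x))|$, with $\psi\equiv0$ on $\partial\Gamma$.
\item The change of variables to increments then gives
\begin{align*}
\mathcal Z_\Gamma\le\Bigl(\sum_{m\in\mathbb Z}e^{-\beta|m|}\Bigr)^{|\Gamma|}=\Bigl(\frac{1+e^{-\beta}}{1-e^{-\beta}}\Bigr)^{|\Gamma|}.
\end{align*}
\end{itemize}
Therefore $\overline H_\beta(\Gamma)\le|\Gamma|\bigl(\log\frac{1+e^{-\beta}}{1-e^{-\beta}}+h_w(\beta)\bigr)=:c_2(\beta)|\Gamma|$, which completes \eqref{eq:Hbar-linear-bound}.
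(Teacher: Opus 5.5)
Your proof is correct, but it follows a different route from the paper. The paper gives no argument at all: it cites \cite[Lemma~3.2]{lacoin2018wetting} together with the additivity remark preceding it. You instead derive every claim directly from the definition of $H_\beta$. Additivity comes from the exact factorization of $\mathcal Z^+_{\Gamma,\beta}$ over separated sets, using the zero boundary condition and the product form of the positivity constraint. The singleton and pair values come from two geometric series, giving $(1-J^2)^{-1}$ and $(1+J^2)/\bigl((1-J^2)(1-J^3)\bigr)$. For the lower bound you use the identity $\mathcal H_\Gamma(\psi)=4\sum_x\psi(x)-2\sum_{e}\min(\psi(x),\psi(y))$, which writes $\overline H_\beta(\Gamma)$ as a log-exponential moment under i.i.d.\ geometric heights, and then apply Jensen. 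For the upper bound you use a spanning-forest bound on the unconstrained partition function. The citation buys brevity; your route buys a self-contained argument with explicit constants. Your $c_1(\beta)=\beta J^4/(1-J^4)$ is far from the true per-edge order $J^3$ visible in $\ell_2(\beta)$, but this does not matter here: the paper only uses additivity, the exact singleton and pair values, and the upper constant $c_2(\beta)$ (through $C_\star(\beta)$). One point of your framing should be corrected: the large-$\beta$ hypothesis is not merely a convention. Your computations hold for every $\beta>0$ for the functional $H_\beta(\Gamma)+|\Gamma|\log(1-J^2)$. However, $\overline H_\beta$ is defined with $h_w(\beta)$, the critical point of \eqref{eq:homogeneous-wetting-point}. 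Its identification with $-\log(1-J^2)$ in \eqref{eq:wetting-point-J} is Lacoin's low-temperature theorem, and that is exactly where the hypothesis enters your singleton, doubleton, and lower-bound steps.
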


\begin{proof}[Proof of Lemma ~\ref{lemma:Hbar-basic-properties}]
The statement follows directly from \cite[Lemma~3.2]{lacoin2018wetting} and the additivity property stated immediately before that lemma, in the low-temperature regime in which \eqref{eq:wetting-point-J} holds after introducing
\begin{align*}
    \overline H_{\beta}(\Gamma)
    =
    H_{\beta}(\Gamma)-h_w(\beta)|\Gamma|.
\end{align*}
\end{proof}

Lemma~\ref{lemma:Hbar-basic-properties} makes the contribution of the different connected components of $\phi^{-1}(\mathbb Z_-)$ explicit.
If $\Gamma$ is such a connected component, then its contribution to the exponent in \eqref{eq:alternative-representation-disordered} is
\begin{align}
    \sum_{x\in\Gamma}
    \bigl(u+\eta_x^{(\alpha)}\bigr)
    -
    \overline H_{\beta}(\Gamma).
    \label{eq:component-contribution}
\end{align}
In particular, an isolated component $\{x\}$ contributes
\begin{align*}
    u+\eta_x^{(\alpha)},
\end{align*}
whereas a nearest-neighbor component $\{x,y\}$ contributes
\begin{align*}
    2u
    +
    \eta_x^{(\alpha)}
    +
    \eta_y^{(\alpha)}
    -
    \ell_2(\beta).
\end{align*}
Thus the disorder modifies only the site-dependent reward, while the deterministic penalty associated with non-singleton components is the same as in the homogeneous wetting model.

The representation above is still formulated in terms of visits to the non-positive half-plane.
For the lower-bound argument, we instead want to use the known estimates for rare high-level excursions of the unconstrained SOS field.
The next lemma converts $\phi^{-1}(\mathbb Z_-)$ into the high-level set $\phi^{-1}[n,\infty)$.
It is the disordered wetting counterpart of \cite[Lemma~5.2]{lacoin2018wetting}; the boundary-condition comparison and reflection argument are unchanged because the disorder remains attached to the same spatial sites.

\begin{lemma}[High-level representation for disordered wetting; Adapted from\ \cite{lacoin2018wetting}, Lemma~5.2]
\label{lemma:change-boundary-disordered}
Fix $n\in\mathbb Z_+$.
For every $u\in\mathbb R$,
\begin{align}
    \overline F_\beta(\alpha,u)
    &=
    \lim_{N\rightarrow\infty}
    \frac{1}{N^2}
    \log
    \E_{N,\beta}
    \left[
        \exp\left\{
            \sum_{x\in\phi^{-1}[n,\infty)}
            \bigl(u+\eta_x^{(\alpha)}\bigr)
            -
            \overline H_{\beta}\bigl(\phi^{-1}[n,\infty)\bigr)
        \right\}
    \right].
    \label{eq:change-boundary-disordered}
\end{align}
The limit in \eqref{eq:change-boundary-disordered} holds $\P$-almost surely and in $L^1(\P)$.
\end{lemma}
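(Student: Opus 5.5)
The plan is to run the argument of \cite[Lemma~5.2]{lacoin2018wetting} essentially verbatim, keeping the disorder attached to spatial sites throughout. We start from Lemma~\ref{lemma:alternative-representation-disordered}, which writes $\overline F_N^{0,\omega}$ as $N^{-2}\log$ of the $\E_{N,\beta}$-expectation of
\begin{align*}
    W^\omega(A):=\exp\Bigl\{\sum_{x\in A}\bigl(u+\eta_x^{(\alpha)}\bigr)-\overline H_\beta(A)\Bigr\},
    \qquad
    A=\phi^{-1}(\mathbb Z_-).
\end{align*}
The important feature is that $W^\omega$ depends on $\phi$ only through the set $A$, and does so in a $\phi$-independent, site-labelled way.

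The first step is the change of boundary condition. We replace the zero boundary condition of $\P_{N,\beta}$ by the boundary height $-n$, which is the same as writing $\phi=\psi-n$ with $\psi\sim\P_{N,\beta}$. Then $\phi^{-1}(\mathbb Z_-)$ becomes $\psi^{-1}(-\infty,n]$. To compare boundary conditions, we restrict to the box $\Lambda_{N-2}$ (or to an annulus of width one) and pay only $e^{O(nN\beta)}$ for the boundary layer. Concretely, we bound from both sides by forcing the heights on $\partial$-adjacent sites to interpolate, and we bound $W^\omega$ restricted to the boundary strip crudely. Since $\overline H_\beta\ge0$ on components of size at least $2$, vanishes on singletons, and is additive by Lemma~\ref{lemma:Hbar-basic-properties}, modifying $A$ on $O(N)$ sites changes $\log W^\omega$ by at most $\sum_{x\in\text{strip}}(|u|+|\eta_x^{(\alpha)}|)+C\,O(N)$. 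This is $O(N)$ almost surely by the law of large numbers applied to the strip. The effect on the limit is therefore $O(1/N)\to0$.

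The second step is the reflection $\psi\mapsto-\psi$. The law $\P_{N,\beta}$ with zero boundary condition is invariant under it, and it maps $\psi^{-1}(-\infty,n]$ to $\psi^{-1}[-n,\infty)$. Combining this with the shift in the first step (choosing the signs so that the boundary ends up at $0$), we arrive at the set $\phi^{-1}[n,\infty)$. The weight carries the same site labels, so the expression inside the expectation becomes exactly that of \eqref{eq:change-boundary-disordered}.

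Finally, the existence of the limit almost surely and in $L^1$, and its identification with $\overline F_\beta(\alpha,u)$, follow from Proposition~\ref{prop:free-energy-basic-properties} with $n=0$, because the two finite-volume quantities differ by $O(N^{-1}(1+\sum_{\text{strip}}|\eta_x|/N))$. This term tends to $0$ almost surely, and also in $L^1$ since $\E|\eta_0|<\infty$.

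I expect the main obstacle to be the boundary comparison. Changing boundary heights alters the components of $A$ that touch the strip, and a single component may extend deep into the bulk. This is handled by the additivity of $\overline H_\beta$ together with the linear bound \eqref{eq:Hbar-linear-bound}: cutting a component along the strip changes $\overline H_\beta$ by at most $C$ times the number of strip sites and edges removed. I would first verify this monotone-cut inequality and then let the rest follow.
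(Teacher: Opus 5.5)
Your outline (start from Lemma~\ref{lemma:alternative-representation-disordered}, change the boundary height, reflect, then transfer convergence from Proposition~\ref{prop:free-energy-basic-properties}) is the right one. However, the way you carry out the boundary change rests on a step you have not justified.

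By forcing the strip heights to interpolate, you alter the contact set $A$. You then claim this changes $\log W^\omega$ by $O(N)$ plus strip disorder. The properties you cite do not give this:
\begin{itemize}
\item Additivity, $\overline H_\beta\ge0$ on non-singletons, and $\overline H_\beta(\{x\})=0$ say nothing about how $\overline H_\beta(\mathcal C)$ compares with $\sum_i\overline H_\beta(\mathcal C_i)$ when deleting $k$ strip sites cuts a component $\mathcal C$ into pieces $\mathcal C_i$.
\item The linear bound \eqref{eq:Hbar-linear-bound} does not fill this in. It only places both quantities in intervals of width of order $(c_2(\beta)-c_1(\beta))|\mathcal C|$, and $|\mathcal C|$ may be of order $N^2$, not $k$.
\end{itemize}

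What does hold is one-sided. Write $\P^+_{\Gamma,\beta}$ for the zero-boundary SOS measure on $\Gamma$ restricted to $\Omega^+_\Gamma$. Then $\mathcal Z^+_{\Gamma\setminus\{x\},\beta}=\mathcal Z^+_{\Gamma,\beta}\,\P^+_{\Gamma,\beta}(\psi(x)=0)$, hence $\overline H_\beta(\Gamma\setminus\{x\})\le\overline H_\beta(\Gamma)+h_w(\beta)$. The reverse increment $-\log\P^+_{\Gamma,\beta}(\psi(x)=0)-h_w(\beta)$ is not uniformly bounded: entropic repulsion drives it to infinity for $x$ deep inside large domains. So a two-sided cut inequality would need a real estimate on $\P^+_{\Gamma,\beta}(\psi(x)=0)$, which you have not supplied.

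The interpolation also has an energy cost you have not controlled. Resetting strip heights changes the energy of the edges to adjacent interior sites by amounts depending on $\phi$ there. This is not $O(\beta nN)$ uniformly in $\phi$.

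This obstacle is self-inflicted, and the paper's proof avoids it by never modifying $\phi$. For the same configuration, $\mathcal H^n_{\Lambda_N}(\phi)$ and $\mathcal H_{\Lambda_N}(\phi)$ differ only on the $4N$ boundary edges, each by at most $n$. So $|\mathcal H^n_{\Lambda_N}(\phi)-\mathcal H_{\Lambda_N}(\phi)|\le4nN$. Since $\mathcal Z_{\Lambda_N,\beta}$ does not depend on the boundary height, $e^{-4\beta nN}\E_{N,\beta}[G]\le\E^n_{N,\beta}[G]\le e^{4\beta nN}\E_{N,\beta}[G]$ for every nonnegative $G$. In particular this holds for $G=W^\omega(\phi^{-1}(\mathbb Z_-))$, which is left untouched.

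Then $\psi=n-\phi$ maps $\P^n_{N,\beta}$ to $\P_{N,\beta}$ and $\phi^{-1}(\mathbb Z_-)$ onto $\psi^{-1}[n,\infty)$, with the same site labels. Mind the sign here. With boundary height $-n$, as in your first step, you end at $\psi^{-1}[-n,\infty)$, which is essentially the whole box rather than the rare high-level set. The boundary height must be $+n$.

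Finally, the resulting error $4\beta n/N$ is deterministic. So almost-sure and $L^1(\P)$ convergence transfer directly from Lemma~\ref{lemma:alternative-representation-disordered} and Proposition~\ref{prop:free-energy-basic-properties}, with no law of large numbers needed for the strip disorder.
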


\begin{proof}
The proof follows \cite[Lemma~5.2]{lacoin2018wetting}.
The change from zero boundary condition to boundary height $n$ has only a boundary-order cost, since
\begin{align*}
\left|
H_{\Lambda_N}^{n}(\phi)
-
H_{\Lambda_N}(\phi)
\right|
&\le
4nN.
\end{align*}
Under the reflection $\psi=n-\phi$, the set $\phi^{-1}(\mathbb Z_-)$ is mapped to $\psi^{-1}[n,\infty)$.
Since this reflection changes only the height variable and leaves the spatial sites unchanged, the disorder reward $\eta_x^{(\alpha)}$ is unaffected.
Combining this identity with Lemma~\ref{lemma:alternative-representation-disordered}, Proposition~\ref{prop:free-energy-basic-properties}, and the boundary-order estimate above yields \eqref{eq:change-boundary-disordered}.
The same comparison preserves both the $\P$-almost-sure and the $L^1(\P)$ convergence.
\end{proof}

Consequently, the lower-bound problem is reduced to studying rare connected components of the high-level set
\begin{align*}
    \phi^{-1}[n,\infty)
\end{align*}
under the unconstrained SOS measure.
The required high-level estimates are collected in the following subsections.

\subsection{Contour representation and restricted SOS measures}
\label{subsec:3.2}

We recall the contour representation of the low-temperature SOS measure and introduce the contour-restricted measures that will be used throughout the coarse-graining argument.
The notation and the basic facts below follow \cite[Section~4.2]{lacoin2018wetting} and \cite[Section~3.1]{lacoin2021solid}.
The main point for our purposes is that the contour representation remains available after restricting the collection of admissible contours.
This will allow us later to identify the conditional laws arising from the coarse-graining procedure as contour-restricted SOS measures.

We first recall the geometric contour notation.
Let $(\mathbb Z^2)^*$ denote the dual lattice of $\mathbb Z^2$.
Two adjacent dual edges meeting at a dual vertex $x^*$ are said to be \emph{linked} if they lie on the same side of the line passing through $x^*$ and making an angle $\pi/4$ with the horizontal axis.
A contour sequence is a finite sequence of distinct dual edges $(e_1,\ldots,e_m)$ such that consecutive edges share an endpoint, with the cyclic convention $e_{m+1}=e_1$, and such that whenever four edges of the sequence meet at the same dual vertex, the two consecutive pairs are linked according to the above convention.

A \emph{geometric contour} is a set of dual edges $\widetilde\gamma$ which can be ordered as a contour sequence.
We write
\begin{align}
    |\widetilde\gamma|
    &:=
    \text{the number of dual edges in }\widetilde\gamma
    \label{eq:geometric-contour-length}
\end{align}
for its length.
The set of lattice vertices enclosed by $\widetilde\gamma$ is denoted by
\begin{align*}
    \overline\gamma
    \subset
    \mathbb Z^2,
\end{align*}
and will be referred to as the \emph{interior} of the contour.

A \emph{signed contour} is a pair consisting of a geometric contour and a sign:
\begin{align*}
    \gamma
    &=
    \bigl(
        \widetilde\gamma,
        \varepsilon(\gamma)
    \bigr),
    \qquad
    \varepsilon(\gamma)\in\{-1,+1\}.
\end{align*}
We denote by $\mathsf{Cont}$ the collection of all signed contours in $\mathbb Z^2$, and, for a finite set $\Lambda\subset\mathbb Z^2$, set
\begin{align}
    \mathsf{Cont}_\Lambda
    &:=
    \left\{
        \gamma\in\mathsf{Cont}:
        \overline\gamma\subset\Lambda
    \right\}.
    \label{eq:contours-in-domain}
\end{align}

For a signed contour $\gamma$, let $\Delta_\gamma$ denote its lattice neighborhood.
More precisely, $\Delta_\gamma$ consists of the lattice sites at Euclidean distance $1/2$ from $\widetilde\gamma$, together with the sites at distance $1/\sqrt 2$ from a dual vertex at which two non-linked edges of $\widetilde\gamma$ meet.
We decompose this neighborhood into its internal and external parts:
\begin{align}
    \Delta_\gamma^-
    &:=
    \Delta_\gamma\cap\overline\gamma,
    \qquad
    \Delta_\gamma^+
    :=
    \Delta_\gamma\cap\overline\gamma^{\,c}.
    \label{eq:internal-external-contour-neighborhoods}
\end{align}

Let $\Lambda\subset\mathbb Z^2$ be finite and let $n\in\mathbb Z$.
Given $\phi\in\Omega_\Lambda$, we use the convention $\phi(x)=n$ for $x\notin\Lambda$.
A signed contour $\gamma\in\mathsf{Cont}_\Lambda$ is said to be a \emph{contour of $\phi$ with boundary height $n$} if
\begin{align}
    k(\gamma,\phi)
    &:=
    \min_{
        \substack{
            x\in\Delta_\gamma^-\\
            y\in\Delta_\gamma^+
        }
    }
    \varepsilon(\gamma)
    \bigl(
        \phi(x)-\phi(y)
    \bigr)
    >
    0.
    \label{eq:contour-intensity-definition}
\end{align}
Since $\phi$ is integer-valued, $k(\gamma,\phi)\in\mathbb N$.
We call this quantity the \emph{intensity} of the contour.

A \emph{cylinder} is a contour together with a positive integer intensity.
We write
\begin{align*}
    \widehat\gamma
    &=
    (\gamma,k),
    \qquad
    k=k(\widehat\gamma)\in\mathbb N,
\end{align*}
and associate with it the cylinder function
\begin{align}
    \varphi_{\widehat\gamma}(x)
    &:=
    \varepsilon(\gamma)
    k(\widehat\gamma)
    \mathbf 1_{\overline\gamma}(x).
    \label{eq:cylinder-function}
\end{align}
For a configuration $\phi$ with boundary height $n$, we denote by
\begin{align*}
    \widehat\Upsilon_n(\phi)
\end{align*}
the collection of all its cylinders, and by
\begin{align*}
    \Upsilon_n(\phi)
\end{align*}
the corresponding collection of signed contours.
When the boundary height is zero, we simply write $\widehat\Upsilon(\phi)$ and $\Upsilon(\phi)$.

We next recall the compatibility relation between contours.
Two cylinders $\widehat\gamma$ and $\widehat\gamma'$ are said to be \emph{compatible} if both are cylinders of the configuration
\begin{align*}
    \varphi_{\widehat\gamma}
    +
    \varphi_{\widehat\gamma'}.
\end{align*}
This condition depends only on the underlying signed contours and not on their intensities.
We therefore also say that two signed contours $\gamma$ and $\gamma'$ are compatible whenever the cylinders $(\gamma,1)$ and $(\gamma',1)$ are compatible.
A collection of signed contours or cylinders is called compatible if its elements are pairwise compatible.

Equivalently, compatible contours do not cross one another, their interiors are either disjoint or nested, and their signs satisfy the usual local compatibility conditions along their internal and external neighborhoods; see \cite[Section~4.2]{lacoin2018wetting}.

We say that a finite set $\Lambda\subset\mathbb Z^2$ is simply connected if it is the interior of a geometric contour.
On a simply connected domain, the cylinder representation determines the field uniquely.
The following standard facts collect the parts of the contour representation that will be used later.

\begin{lemma}[Contour and cylinder representation]
\label{lem:contour-cylinder-representation}
Let $\Lambda\subset\mathbb Z^2$ be finite and simply connected, and let $n\in\mathbb Z$.

For every $\phi\in\Omega_\Lambda$, the cylinder collection $\widehat\Upsilon_n(\phi)$ is compatible.
Conversely, every compatible collection of cylinders in $\Lambda$ determines a unique SOS configuration with boundary height $n$.

More precisely, for every $x\in\Lambda$,
\begin{align}
    \phi(x)
    &=
    n
    +
    \sum_{
        \widehat\gamma
        \in
        \widehat\Upsilon_n(\phi)
    }
    \varepsilon(\gamma)
    k(\widehat\gamma)
    \mathbf 1_{\overline\gamma}(x).
    \label{eq:cylinder-reconstruction}
\end{align}
Moreover,
\begin{align}
    \mathcal H_\Lambda^n(\phi)
    &=
    \sum_{
        \widehat\gamma
        \in
        \widehat\Upsilon_n(\phi)
    }
    k(\widehat\gamma)
    |\widetilde\gamma|.
    \label{eq:contour-energy-representation}
\end{align}
\end{lemma}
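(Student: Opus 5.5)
We will prove the lemma by induction on the number of nontrivial cylinders, in the standard way for low-temperature SOS contour expansions (cf. \cite[Section~4.2]{lacoin2018wetting}). The argument has three parts: compatibility of the cylinder collection, the reconstruction formula \eqref{eq:cylinder-reconstruction} together with the converse, and the energy identity \eqref{eq:contour-energy-representation}.

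\textbf{Compatibility.} Fix $\phi\in\Omega_\Lambda$, extended by $n$ outside $\Lambda$. For each level $j$, consider the level sets $\{\phi\ge j\}$ (for $j>n$) and $\{\phi\le j\}$ (for $j<n$). The boundary of each such level set, viewed as a set of dual edges, splits into geometric contours; four-edge vertices are split according to the $\pi/4$ linking rule. Each contour $\gamma$ of $\phi$ in the sense of \eqref{eq:contour-intensity-definition} arises in this way, and its intensity $k(\gamma,\phi)$ counts the number of consecutive levels whose boundary contains $\widetilde\gamma$. Two distinct contours come from nested level sets, or from disjoint components of one level set. Hence their interiors are nested or disjoint, they do not cross, and the sign conditions along $\Delta_\gamma^\pm$ hold. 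This is exactly the statement that $\varphi_{\widehat\gamma}+\varphi_{\widehat\gamma'}$ has both as cylinders, which gives compatibility.

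\textbf{Reconstruction and converse.} We prove \eqref{eq:cylinder-reconstruction} by a layer-cake identity. For $x\in\Lambda$ with $\phi(x)>n$, write
\begin{align*}
    \phi(x)-n=\sum_{j=n+1}^{\infty}\mathbf 1_{\{\phi(x)\ge j\}}.
\end{align*}
Since $\Lambda$ is simply connected, $\mathbf 1_{\{\phi\ge j\}}(x)$ equals the alternating count of positive and negative contours of level $j$ that enclose $x$. Summing over $j$ and grouping equal geometric contours into cylinders produces the intensities. For the converse, take a compatible family and set $\phi:=n+\sum\varphi_{\widehat\gamma}$. The task is to show that its cylinder collection is precisely the given family. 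Order the family by inclusion of interiors and induct on its size. Adding an innermost, or outermost, compatible cylinder changes $\phi$ by a constant on $\overline\gamma$. Compatibility guarantees that the discrepancy across $\widetilde\gamma$ has sign $\varepsilon(\gamma)$ and size exactly $k$, and that no other contour is created or destroyed. Uniqueness then follows from the reconstruction formula itself.

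\textbf{Energy.} For \eqref{eq:contour-energy-representation}, each edge $\{x,y\}$ with $x\in\Lambda$, including boundary edges, is dual to exactly one dual edge $e$. We have
\begin{align*}
    |\phi(x)-\phi(y)|=\sum_{j}\mathbf 1_{\{e\text{ separates level }j\}},
\end{align*}
and every such level crossing belongs to exactly one cylinder containing $e$, counted with multiplicity $k$. Summing over edges gives the identity. The factor $1/2$ in \eqref{eq:SOS-Hamiltonian} only ensures that each internal edge is counted once.

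\textbf{Main obstacle.} The delicate step is the bookkeeping at dual vertices where four contour edges meet. There we must check that the linking convention makes the decomposition into contours well defined, and that these vertices are compatible with the definition of $\Delta_\gamma$ in \eqref{eq:internal-external-contour-neighborhoods}. This is what makes both the intensity and compatibility definitions consistent. I would handle it by a local case check on the $2\times2$ block around the vertex, following \cite[Section~4.2]{lacoin2018wetting} and \cite[Section~3.1]{lacoin2021solid}.
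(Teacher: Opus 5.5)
\paragraph{Comparison with the paper.} The paper does not prove this lemma. It recalls it as a standard fact of the low-temperature contour representation and points to \cite[Section~4.2]{lacoin2018wetting} and \cite[Section~3.1]{lacoin2021solid}. Your level-set/layer-cake outline is the usual argument behind those references. The forward direction (compatibility, \eqref{eq:cylinder-reconstruction}, \eqref{eq:contour-energy-representation}) is sound modulo the local check at four-edge vertices and non-linked corners, which you correctly single out.

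Three small corrections in that part:
\begin{itemize}
\item Two contours of $\phi$ can also be the outer boundary and a hole boundary of the same component of a level set.
\item What compatibility requires is a condition on $\mathbf 1_{\overline{\gamma'}}$ over $\Delta_\gamma$. For instance, when a positive $\gamma$ encloses a negative $\gamma'$, you need $\Delta_\gamma^-\cap\overline{\gamma'}=\varnothing$. This must be derived from the two gap inequalities of $\phi$, and it is part of the same vertex check.
\item Simple connectedness is what places the level-set boundary components in $\mathsf{Cont}_\Lambda$. The alternating-count identity itself holds for any finite set.
\end{itemize}

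\paragraph{The gap: the converse.} Write $\phi_{\mathcal G}:=n+\sum_{\widehat\gamma\in\mathcal G}\varphi_{\widehat\gamma}$ for a compatible family $\mathcal G$. Summing the pairwise compatibility conditions only shows that $k(\gamma,\phi_{\mathcal G})\ge k(\widehat\gamma)$. Each other cylinder contributes a nonnegative correction on $\Delta_\gamma^-\times\Delta_\gamma^+$ that vanishes at some pair. Pairwise information does not produce a single pair at which all corrections vanish, and it says nothing about newly created contours. So ``compatibility guarantees size exactly $k$ and no contour is created or destroyed'' is asserted rather than proved.

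You can close it as follows.
\begin{itemize}
\item \emph{Exact intensities.} Use strong induction and apply the hypothesis to $\mathcal G\setminus\{\widehat\gamma\}$ for every $\widehat\gamma\in\mathcal G$, not only an innermost one. The correction equals $k(\gamma,\phi_{\mathcal G\setminus\{\widehat\gamma\}})$. If it were positive, the signed contour $\gamma$ would be carried by some cylinder $(\gamma,m)\in\mathcal G\setminus\{\widehat\gamma\}$. But two distinct cylinders on the same signed contour are never compatible. Hence $\mathcal G\subset\widehat\Upsilon_n(\phi_{\mathcal G})$ with the correct intensities, so nothing in $\mathcal G$ is destroyed.
\item \emph{No extra cylinders.} Use the forward direction. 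By \eqref{eq:contour-energy-representation}, $\mathcal H^n_\Lambda(\phi_{\mathcal G})=\sum_{\mathcal G}k|\widetilde\gamma|+\sum_{\widehat\Upsilon_n(\phi_{\mathcal G})\setminus\mathcal G}k|\widetilde\delta|$. On the other hand, the triangle inequality edge by edge, together with $\mathcal H^0_\Lambda(\varphi_{\widehat\gamma})=k(\widehat\gamma)|\widetilde\gamma|$, gives $\mathcal H^n_\Lambda(\phi_{\mathcal G})\le\sum_{\mathcal G}k|\widetilde\gamma|$. Therefore $\widehat\Upsilon_n(\phi_{\mathcal G})=\mathcal G$.
\end{itemize}

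A local alternative also works. When an innermost $\widehat\gamma_0$ is added, the old field is constant on $\overline\gamma_0$ and at most that constant on $\Delta^+_{\gamma_0}$. So each affected level set becomes a disjoint union with $\overline\gamma_0$, and the linking rule splits its boundary into $\widetilde\gamma_0$ and the old components. Either route supplies the missing idea; your uniqueness step then goes through as written.
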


The identities \eqref{eq:cylinder-reconstruction} and \eqref{eq:contour-energy-representation} are the reason why the SOS measure admits a particularly simple description in terms of its contours.
We recall this description next.

Let $\mathbb Q_{\Lambda,\beta}$ be the product probability measure on collections
\begin{align*}
    \Xi
    \subset
    \mathsf{Cont}_\Lambda
\end{align*}
under which the indicators $\mathbf 1_{\{\gamma\in\Xi\}}$ are independent and
\begin{align}
    \mathbb Q_{\Lambda,\beta}
    \bigl(
        \gamma\in\Xi
    \bigr)
    &=
    e^{-\beta|\widetilde\gamma|},
    \qquad
    \gamma\in\mathsf{Cont}_\Lambda.
    \label{eq:independent-contour-measure}
\end{align}
The law of the contour ensemble under the SOS measure is obtained by conditioning this product measure on compatibility.

\begin{lemma}[Distribution of contours and intensities]
\label{lem:contour-distribution-domination}
Let $\Lambda$ be finite and simply connected and let $n\in\mathbb Z$.
Under $\P_{\Lambda,\beta}^n$,
\begin{align}
    \operatorname{Law}
    \bigl(
        \Upsilon_n(\phi)
    \bigr)
    &=
    \mathbb Q_{\Lambda,\beta}
    \left(
        \Xi\in\cdot
        \,\middle|\,
        \Xi
        \text{ is compatible}
    \right).
    \label{eq:contour-law-product-conditioned}
\end{align}
In particular, the contour collection $\Upsilon_n(\phi)$ is stochastically dominated, with respect to inclusion, by the independent contour process $\mathbb Q_{\Lambda,\beta}$.

Conditionally on $\Upsilon_n(\phi)$, the intensities of the corresponding cylinders are independent geometric random variables.
More precisely, if $\gamma\in\Upsilon_n(\phi)$, then
\begin{align}
    \P_{\Lambda,\beta}^n
    \left(
        k(\widehat\gamma)=m
        \,\middle|\,
        \Upsilon_n(\phi)
    \right)
    &=
    \left(
        1-e^{-\beta|\widetilde\gamma|}
    \right)
    e^{-\beta|\widetilde\gamma|(m-1)},
    \qquad
    m\ge1.
    \label{eq:conditional-cylinder-intensity}
\end{align}
\end{lemma}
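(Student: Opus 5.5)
The plan is to derive both parts of Lemma~\ref{lem:contour-distribution-domination} from the bijection in Lemma~\ref{lem:contour-cylinder-representation}, which converts the SOS weight into a product weight over cylinders.

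\textbf{Step 1: weight of a contour collection.} By Lemma~\ref{lem:contour-cylinder-representation}, $\phi\mapsto\widehat\Upsilon_n(\phi)$ is a bijection between $\Omega_\Lambda$ and compatible finite collections of cylinders in $\mathsf{Cont}_\Lambda$. Moreover, \eqref{eq:contour-energy-representation} gives
\begin{align*}
    e^{-\beta\mathcal H^n_\Lambda(\phi)}
    =
    \prod_{\widehat\gamma\in\widehat\Upsilon_n(\phi)}
    e^{-\beta k(\widehat\gamma)|\widetilde\gamma|}.
\end{align*}
Compatibility depends only on the signed contours, not on the intensities. Hence, for a fixed compatible collection $\Xi$ of signed contours, the intensities $(k_\gamma)_{\gamma\in\Xi}$ range freely over $\mathbb N^\Xi$. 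Summing the product over all such intensities gives
\begin{align*}
    \P^n_{\Lambda,\beta}\bigl(\Upsilon_n(\phi)=\Xi\bigr)
    \propto
    \prod_{\gamma\in\Xi}
    \frac{e^{-\beta|\widetilde\gamma|}}{1-e^{-\beta|\widetilde\gamma|}}.
\end{align*}

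\textbf{Step 2: matching with the product measure.} Under $\mathbb Q_{\Lambda,\beta}$ the inclusion probability of $\gamma$ is some $p_\gamma$, so
\begin{align*}
    \mathbb Q_{\Lambda,\beta}(\Xi)
    \propto
    \prod_{\gamma\in\Xi}\frac{p_\gamma}{1-p_\gamma}.
\end{align*}
To match Step 1 we need $p_\gamma/(1-p_\gamma)=e^{-\beta|\widetilde\gamma|}/(1-e^{-\beta|\widetilde\gamma|})$, which holds exactly when $p_\gamma=e^{-\beta|\widetilde\gamma|}$. This is \eqref{eq:independent-contour-measure}. Conditioning $\mathbb Q_{\Lambda,\beta}$ on compatibility then removes the normalization and yields \eqref{eq:contour-law-product-conditioned}.

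\textbf{Step 3: intensities.} Still from Step 1, conditionally on $\Upsilon_n(\phi)=\Xi$ the weight of the intensities factorizes as $\prod_{\gamma\in\Xi} e^{-\beta k_\gamma|\widetilde\gamma|}$ over $\mathbb N^\Xi$. The intensities are therefore independent. Normalizing each factor gives the geometric law \eqref{eq:conditional-cylinder-intensity}.

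\textbf{Step 4: stochastic domination.} The event that $\Xi$ is compatible is decreasing, since removing contours from a pairwise compatible family keeps it pairwise compatible. A product measure conditioned on a decreasing event is stochastically dominated by the unconditioned product measure. One way to see this is Harris--FKG: for every increasing $f$, $f$ and $\mathbf 1_{\{\text{compatible}\}}$ are negatively correlated, so
\begin{align*}
    \mathbb Q(f\mid\text{compatible})\le\mathbb Q(f).
\end{align*}
Strassen's theorem then upgrades this to a monotone coupling.

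\textbf{Main obstacle.} The only delicate point is Step 1. It must use that compatibility is genuinely independent of the intensities, as stated after the definition of compatible cylinders. It also needs the bijection on simply connected $\Lambda$. Everything else reduces to geometric sums.
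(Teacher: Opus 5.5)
Your proof is correct and is essentially the standard argument the paper relies on: the cylinder bijection and energy identity give a product weight, summing the geometric series over intensities yields the odds $e^{-\beta|\widetilde\gamma|}/(1-e^{-\beta|\widetilde\gamma|})$, which matches $\mathbb Q_{\Lambda,\beta}$ conditioned on compatibility, and the domination follows from Harris--FKG because compatibility is a decreasing event. That last step is exactly the justification the paper gives after the lemma. The paper recalls the law identity from Lacoin's earlier works without deriving it, so your Steps 1--3 just make that standard computation explicit.
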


The compatibility conditioning in \eqref{eq:contour-law-product-conditioned} is decreasing with respect to inclusion.
Hence the stochastic domination in Lemma~\ref{lem:contour-distribution-domination} follows from the FKG--Harris inequality for the product contour measure.
This domination will be used repeatedly to control the occurrence of long contours.

For the coarse-graining argument we also need versions of the SOS measure in which only a prescribed family of contours is allowed.
Let
\begin{align*}
    \mathcal L
    \subset
    \mathsf{Cont}_\Lambda
\end{align*}
be a fixed family of signed contours.
We define the \emph{contour-restricted SOS measure} by
\begin{align}
    \P_{\mathcal L,\Lambda,\beta}^n
    (\cdot)
    &:=
    \P_{\Lambda,\beta}^n
    \left(
        \,\cdot\,
        \middle|
        \Upsilon_n(\phi)
        \subset
        \mathcal L
    \right).
    \label{eq:contour-restricted-sos-measure}
\end{align}
When $n=0$, we omit the superscript and write
\begin{align*}
    \P_{\mathcal L,\Lambda,\beta}.
\end{align*}
We denote the corresponding expectation by $\E_{\mathcal L,\Lambda,\beta}^n$, or simply by $\E_{\mathcal L,\Lambda,\beta}$ when $n=0$.

The contour representation above remains valid under this restriction.
More precisely, under $\P_{\mathcal L,\Lambda,\beta}^n$, the contour collection has the law of the independent contour process restricted to $\mathcal L$ and conditioned on compatibility:
\begin{align}
    \operatorname{Law}_{
        \P_{\mathcal L,\Lambda,\beta}^n
    }
    \bigl(
        \Upsilon_n(\phi)
    \bigr)
    &=
    \mathbb Q_{\Lambda,\beta}
    \left(
        \Xi\in\cdot
        \,\middle|\,
        \Xi\subset\mathcal L,
        \,
        \Xi
        \text{ is compatible}
    \right).
    \label{eq:restricted-contour-law}
\end{align}
Consequently, for every increasing function $F$ of the contour collection,
\begin{align}
    \E_{\mathcal L,\Lambda,\beta}^n
    \left[
        F\bigl(\Upsilon_n(\phi)\bigr)
    \right]
    &\le
    \mathbb E_{\mathbb Q_{\Lambda,\beta}}
    \left[
        F\bigl(\Xi\cap\mathcal L\bigr)
    \right]
    \nonumber\\
    &\le
    \mathbb E_{\mathbb Q_{\Lambda,\beta}}
    \left[
        F(\Xi)
    \right].
    \label{eq:restricted-contour-domination}
\end{align}
Thus all Peierls-type upper bounds obtained from the independent contour process remain valid uniformly under contour restrictions.

We shall need to quantify how far a site is from a contour that is forbidden by the restriction.
Following \cite[Section~3.2]{lacoin2021solid}, for $x\in\Lambda$ we define
\begin{align}
    d
    \left(
        x,
        \mathcal L^c
    \right)
    &:=
    \inf_{
        \gamma
        \in
        \mathsf{Cont}
        \setminus
        \mathcal L
    }
    \;
    \max_{
        y\in\overline\gamma
    }
    |x-y|_1,
    \label{eq:distance-to-forbidden-contours}
\end{align}
with the convention that the infimum over the empty set is $+\infty$.
Here the complement is taken in the collection of all signed contours in $\mathbb Z^2$.
In particular, if every contour contained in $\Lambda$ is allowed, then
\begin{align}
    d
    \left(
        x,
        \mathcal L^c
    \right)
    &\ge
    d
    \left(
        x,
        \Lambda^c
    \right).
    \label{eq:forbidden-contour-distance-domain}
\end{align}
The quantity $d(x,\mathcal L^c)$ will measure the boundary and contour-restriction error in the high-level peak estimates of Subsection~\ref{subsec:3.3}.

Finally, we introduce the notion of external contours, which will be used in the coarse-graining decomposition.
Let $\Gamma$ be a compatible collection of signed contours.
A contour $\gamma\in\Gamma$ is called \emph{external} if its interior is maximal under inclusion, that is, if for every $\gamma'\in\Gamma$,
\begin{align}
    \overline{\gamma'}
    &\subset
    \overline\gamma
    \qquad\text{or}\qquad
    \overline{\gamma'}
    \cap
    \overline\gamma
    =
    \varnothing.
    \label{eq:external-contour-definition}
\end{align}
We denote by
\begin{align*}
    \Upsilon_n^{\mathrm{ext}}(\phi)
\end{align*}
the collection of external contours of $\Upsilon_n(\phi)$.
Their interiors are pairwise disjoint.

The contour representation also yields the spatial factorization that will be used later.
Conditionally on the realization of the external contours, the SOS fields inside their interiors are independent.
The same property remains valid under a contour restriction $\P_{\mathcal L,\Lambda,\beta}^n$, with the corresponding induced boundary heights and allowed-contour families.
More generally, after fixing a collection of outer cylinders, the remaining cylinders are partitioned according to the connected regions in which they are contained, and the corresponding SOS fields factorize over these regions.

This conditional-independence property is the basis of the coarse-graining construction in Subsection~\ref{subsec:4.2}.
In particular, the conditional cell laws appearing there, and the reflected laws used in Appendix~\ref{app:local-comparison}, can be identified with contour-restricted SOS measures of the form \eqref{eq:contour-restricted-sos-measure}.

The uniform one-point, two-point, and multipoint estimates for these restricted measures are stated in the next subsection.

\subsection{High-level peak estimates}
\label{subsec:3.3}

We collect here the high-level estimates for the low-temperature SOS measure that will be used in the coarse-graining argument.
There are two types of estimates that we need.
First, we require bounds which remain uniform under restrictions on the admissible contours, since the conditional cell measures arising later are naturally of this form.
Second, in the bulk of a cell we need the sharp infinite-volume asymptotics of isolated one-site peaks and nearest-neighbor two-site peaks in order to recover the constants $a_1(\beta)$ and $a_2(\beta)$ appearing in \eqref{eq:a1-beta}--\eqref{eq:a2-beta}.

We begin with the estimates which are uniform under contour restrictions.
Recall the contour-restricted measure $\P_{\mathcal L,\Lambda,\beta}$ and the distance $d(x,\mathcal L^c)$ introduced in Subsection~\ref{subsec:3.2}.
The following result is adapted from \cite[Proposition~3.4]{lacoin2021solid}.
The preceding contour representation was recalled explicitly for simply connected domains.
For the peak estimates below, however, we use the more general contour-restricted formulation of \cite[Proposition~3.4]{lacoin2021solid}, which is uniform over arbitrary finite domains $\Lambda$ and admissible contour families $\mathcal L$.

\begin{proposition}[Contour-restricted peak estimates]
\label{prop:restricted-peak-estimates}
Assume that $\beta$ is sufficiently large.
There exists $C_\beta<\infty$ such that, uniformly over finite simply connected domains $\Lambda\subset\mathbb Z^2$, families of admissible contours $\mathcal L\subset\mathsf{Cont}_\Lambda$, and integers $n\ge1$, the following estimates hold.

For every $x\in\Lambda$,
\begin{align}
    \P_{\mathcal L,\Lambda,\beta}
    \bigl(
        \phi(x)\ge n
    \bigr)
    &\le
    C_\beta J^{2n}.
    \label{eq:restricted-one-point}
\end{align}
For every pair of distinct sites $x,y\in\Lambda$,
\begin{align}
    \P_{\mathcal L,\Lambda,\beta}
    \left(
        \min\{\phi(x),\phi(y)\}\ge n
    \right)
    &\le
    C_\beta J^{3n}.
    \label{eq:restricted-two-point}
\end{align}

Moreover, the one-point probability admits the sharp approximation
\begin{align}
    \left|
        \P_{\mathcal L,\Lambda,\beta}
        \bigl(
            \phi(x)\ge n
        \bigr)
        -
        a_1(\beta)J^{2n}
    \right|
    &\le
    C_\beta
    \left(
        J^{3n}
        +
        e^{-d(x,\mathcal L^c)}
    \right).
    \label{eq:restricted-sharp-one-point}
\end{align}

Finally, for every fixed $k\ge1$, there exists $C_{\beta,k}<\infty$ such that, for any collection of distinct sites $x_1,\ldots,x_k\in\Lambda$,
\begin{align}
    \P_{\mathcal L,\Lambda,\beta}
    \left(
        \phi(x_i)\ge n
        \text{ for all }1\le i\le k
    \right)
    &\le
    C_{\beta,k}
    e^{-3\beta n\sqrt{k}}.
    \label{eq:restricted-multipoint}
\end{align}
\end{proposition}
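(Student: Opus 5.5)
The statement is essentially a restatement of \cite[Proposition~3.4]{lacoin2021solid}, and the plan is to reduce each estimate to a Peierls-type bound on the independent contour process, using the domination \eqref{eq:restricted-contour-domination}. By Lemma~\ref{lem:contour-cylinder-representation}, the event $\{\phi(x)\ge n\}$ with zero boundary height forces the positive cylinders surrounding $x$ to have total intensity at least $n$. The energy cost is $\sum k(\widehat\gamma)|\widetilde\gamma|$ by \eqref{eq:contour-energy-representation}, and each contour surrounding $x$ has length at least $4$. Summing over nested families of contours around $x$, with the geometric intensities of \eqref{eq:conditional-cylinder-intensity}, gives a bound of the form $\sum_{k}\,(\text{number of ways})\,e^{-4\beta n}(1+O(e^{-2\beta}))^{n}$. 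The $e^{-2\beta}$ corrections come from replacing length-$4$ contours by longer ones, and for $\beta$ large this should be at most $C_\beta J^{2n}=C_\beta e^{-4\beta n}$. The restriction to $\mathcal L$ only removes contours, and the event is increasing in the set of positive contours around $x$, so the first inequality of \eqref{eq:restricted-contour-domination} makes \eqref{eq:restricted-one-point} uniform in $\Lambda$ and $\mathcal L$.

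For the two-point bound \eqref{eq:restricted-two-point} and the multipoint bound \eqref{eq:restricted-multipoint}, the same strategy applies, but the geometric input changes: one needs an isoperimetric lower bound on the cost of lifting several sites. If $x_1,\dots,x_k$ all lie at height at least $n$, then for each level $j\le n$ the level set $\{\phi\ge j\}$ contains all $k$ sites. Its boundary in the dual lattice therefore has length at least $\min\{\text{perimeter of a set containing }k\text{ points}\}\ge 4\sqrt k$, and at least $6$ when $k=2$ (a domino). Summing over levels gives total energy at least $3n\cdot 2$ for two points, hence $e^{-6\beta n}=J^{3n}$, and at least $4n\sqrt k\ge 3n\sqrt k$ in general. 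The combinatorial entropy of choosing the level lines is absorbed for $\beta$ large via a Peierls count $\sum_{\gamma}e^{-(\beta-C)|\widetilde\gamma|}$. To make this rigorous under the conditioned measure, I would bound the probability by the $\mathbb Q$-weight of compatible cylinder collections realizing the event, dividing by the partition function of the remaining contours. This works because removing the realizing cylinders from any compatible collection leaves a compatible collection, which is the standard injection argument.

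The sharp one-point approximation \eqref{eq:restricted-sharp-one-point} is the main obstacle. My plan has two steps. First, show that up to an error $C_\beta J^{3n}$, the event $\{\phi(x)\ge n\}$ is realized by a tower of $n$ length-$4$ contours around $x$ plus a bounded perturbation. Any configuration involving a contour around $x$ of length at least $6$ at linearly many levels costs an extra factor, which I expect to be absorbed into $J^{3n}$ after choosing $\beta$ large, though the precise bookkeeping is delicate. Second, conditionally on the local tower, the probability factorizes as $J^{2n}$ times the probability that the environment contours allow it. This factor depends only on contours near $x$ and converges to $a_1(\beta)$. The restricted and unrestricted measures differ only through contours in $\mathsf{Cont}\setminus\mathcal L$, all of which reach distance $d(x,\mathcal L^c)$ from $x$. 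A cluster expansion or coupling of the compatible contour ensembles, in the spirit of Theorem~\ref{thm:exponential-convergence-SOS}, should then give an error $e^{-c\beta d(x,\mathcal L^c)}\le e^{-d(x,\mathcal L^c)}$ for $\beta$ large. Since this is exactly the content of \cite[Proposition~3.4]{lacoin2021solid}, I would in practice verify that its hypotheses (simply connected $\Lambda$, arbitrary $\mathcal L\subset\mathsf{Cont}_\Lambda$, zero boundary height) match ours and import the statement, reproducing only the domination step above.
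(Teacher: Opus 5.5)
Your fallback of checking the hypotheses and importing \cite[Proposition~3.4]{lacoin2021solid} is exactly what the paper does. It gives no independent proof: only that citation, the remark that the Peierls-type bounds survive the restriction by \eqref{eq:restricted-contour-domination}, and a correction you must not miss. The displayed multipoint bound \cite[(3.18)]{lacoin2021solid} lacks $n$ in the exponent, so importing the statement verbatim gives a useless estimate. The $n$-dependent form \eqref{eq:restricted-multipoint} has to be read off from the proof there (Appendix~B.4, (B.19)). Your isoperimetric level-set argument does supply it, since the energy is at least $4n\sqrt k$ and the surplus over the target $3n\sqrt k$ pays for the entropy. Note also that \eqref{eq:restricted-contour-domination} concerns functions of the contour set. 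For $\{\phi(x_i)\ge n\ \forall i\}$, apply it to the conditional probability given $\Upsilon(\phi)$ after discarding negative contours. That probability is increasing because the restriction does not change the conditional geometric law of the intensities.

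Where your sketch does fail is \eqref{eq:restricted-one-point} and \eqref{eq:restricted-two-point}. Their exponents equal the minimal energies $4n$ and $6n$, so there is no slack.
\begin{itemize}
\item A bound $e^{-4\beta n}(1+O(e^{-2\beta}))^n$ is not $O(e^{-4\beta n})$ uniformly in $n$ for any fixed $\beta$. It is what you get by choosing a contour independently at each level instead of grouping levels into cylinders.
\item Absorbing entropy by replacing $\beta$ with $\beta-C$ loses a factor $e^{4Cn}$, resp.\ $e^{6Cn}$.
\end{itemize}
The missing idea is that the unique minimal contour carries no entropy, so its intensity must be summed exactly. This is the length-four contour $\delta_x$ around $x$, or, for $x\sim y$, the length-six contour $\delta_e$ around the domino. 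Every other contour $\delta$ pays a positive excess per unit of intensity: $\ell_\delta-4\ge\ell_\delta/3$ in the one-point case, and $\ell_\delta-3r_\delta\ge\ell_\delta/4$ with $r_\delta=|\overline\delta\cap\{x,y\}|$ in the two-point case. Let $a$ (and $b$) be the intensity carried at $x$ (and $y$) by the other contours. Sum the exceptional intensity over $m\ge(n-a)_+$ and use $e^{-4\beta(n-a)_+}\le e^{-4\beta n}e^{4\beta a}$, resp.\ $e^{-6\beta(n-\min\{a,b\})_+}\le e^{-6\beta n}e^{3\beta(a+b)}$. What remains is the $n$-independent product $\prod_{\delta}(1-e^{-\beta(\ell_\delta-4)})^{-1}$ (resp.\ with $\ell_\delta-3r_\delta$), which is finite for $\beta$ large. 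This is exactly Step~2 of the proof of Proposition~\ref{prop:conditioned-cell-peak-estimates}. For non-adjacent $x,y$ there is no exceptional contour, since every contour enclosing both has length at least eight.

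Your heuristic for \eqref{eq:restricted-sharp-one-point} also misplaces the error. A length-six contour around $x$ carried for $cn$ levels with $c<1$ costs only an extra factor $J^{cn}\gg J^{n}$. Such decorations therefore belong to the convergent series defining $a_1(\beta)$, and only the tail of that series produces the $O(J^{3n})$ error. Since you, like the paper, defer this estimate to the cluster expansion, the plan survives this point.
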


\begin{remark}
In the displayed statement of \cite[Proposition~3.4, (3.18)]{lacoin2021solid}, the factor $n$ is missing from the exponent in the multipoint estimate.
The proof in \cite[Appendix~B.4, in particular (B.19)]{lacoin2021solid} gives the $n$-dependent estimate \eqref{eq:restricted-multipoint}, which is the form used here.
\end{remark}

The rough bounds \eqref{eq:restricted-one-point}, \eqref{eq:restricted-two-point}, and \eqref{eq:restricted-multipoint} follow from Peierls-type estimates and are stable under the contour restriction because of the stochastic domination \eqref{eq:restricted-contour-domination}.
The sharp estimate \eqref{eq:restricted-sharp-one-point} additionally uses the low-temperature cluster expansion.
Notice that the error produced by the contour restriction becomes exponentially small when $x$ is far from the forbidden contours.

We next recall the corresponding sharp estimates for the unrestricted SOS measure.
By \cite[Propositions~4.5 and~4.6]{lacoin2018wetting}, for nearest-neighbor sites $x\sim y$,
\begin{align}
    \left|
        \P_\beta
        \bigl(
            \phi(x)\ge n
        \bigr)
        -
        a_1(\beta)J^{2n}
    \right|
    &\le
    C_\beta J^{3n},
    \label{eq:infinite-volume-sharp-one-point}
    \\
    \left|
        \P_\beta
        \left(
            \min\{\phi(x),\phi(y)\}\ge n
        \right)
        -
        a_2(\beta)J^{3n}
    \right|
    &\le
    C_\beta nJ^{4n}.
    \label{eq:infinite-volume-sharp-two-point}
\end{align}
In particular,
\begin{align}
    \P_\beta
    \bigl(
        \phi(x)\ge n
    \bigr)
    &\asymp
    J^{2n},
    \nonumber\\
    \P_\beta
    \left(
        \min\{\phi(x),\phi(y)\}\ge n
    \right)
    &\asymp
    J^{3n},
    \label{eq:rough-infinite-volume-one-two-point}
\end{align}
uniformly in $n\ge1$.

For the lower-bound argument, we need slightly more precise information than \eqref{eq:infinite-volume-sharp-one-point}--\eqref{eq:infinite-volume-sharp-two-point}.
Namely, we must distinguish exact singleton and exact two-site components of the high-level set from larger clusters.

For $x\in\mathbb Z^2$ and $n\ge1$, define the exact singleton event
\begin{align}
    S_{x,n}^{\mathrm{hi}}
    &:=
    \left\{
        \{x\}
        \text{ is a connected component of }
        \phi^{-1}[n,\infty)
    \right\}.
    \label{eq:high-level-singleton-event}
\end{align}
For a nearest-neighbor edge $e=\{x,y\}$, define the exact pair event
\begin{align}
    P_{e,n}^{\mathrm{hi}}
    &:=
    \left\{
        \{x,y\}
        \text{ is a connected component of }
        \phi^{-1}[n,\infty)
    \right\}.
    \label{eq:high-level-pair-event}
\end{align}

The following consequence of the estimates in \cite[Section~4.3]{lacoin2018wetting} is the form that will be used later.

\begin{corollary}[Exact high-level clusters]
\label{cor:exact-peak-clusters}
Assume that $\beta$ is sufficiently large.
There exists $C_\beta<\infty$ such that, for every $n\ge1$, $x\in\mathbb Z^2$, and every nearest-neighbor edge $e=\{x,y\}$,
\begin{align}
    \left|
        \P_\beta
        \bigl(
            S_{x,n}^{\mathrm{hi}}
        \bigr)
        -
        a_1(\beta)J^{2n}
    \right|
    &\le
    C_\beta J^{3n},
    \label{eq:infinite-volume-exact-singleton}
    \\
    \left|
        \P_\beta
        \bigl(
            P_{e,n}^{\mathrm{hi}}
        \bigr)
        -
        a_2(\beta)J^{3n}
    \right|
    &\le
    C_\beta nJ^{4n},
    \label{eq:infinite-volume-exact-pair}
    \\
    \P_\beta
    \left(
    \{
        \phi(x)\ge n,\,
        \phi(y)\ge n
    \}
    \setminus
    P_{e,n}^{\mathrm{hi}}
    \right)
    &\le
    C_\beta nJ^{4n}.
    \label{eq:infinite-volume-pair-in-larger-cluster}
\end{align}
\end{corollary}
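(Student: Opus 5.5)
We reduce the three estimates to the sharp one- and two-point bounds \eqref{eq:infinite-volume-sharp-one-point}--\eqref{eq:infinite-volume-sharp-two-point}, together with multipoint Peierls bounds that control larger clusters. All probabilities are taken under the translation-invariant measure $\P_\beta$, and we work at a fixed $n\ge1$.

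\emph{Singletons.} The event $S^{\mathrm{hi}}_{x,n}$ is $\{\phi(x)\ge n\}$ minus the event that some neighbour $y\sim x$ also has $\phi(y)\ge n$. Hence
\begin{align*}
0\le \P_\beta(\phi(x)\ge n)-\P_\beta(S^{\mathrm{hi}}_{x,n})\le \sum_{y\sim x}\P_\beta\bigl(\min\{\phi(x),\phi(y)\}\ge n\bigr)\le 4C_\beta J^{3n},
\end{align*}
where the last bound uses \eqref{eq:rough-infinite-volume-one-two-point}. Combining this with \eqref{eq:infinite-volume-sharp-one-point} gives \eqref{eq:infinite-volume-exact-singleton}.

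\emph{Pairs.} The set difference in \eqref{eq:infinite-volume-pair-in-larger-cluster} is contained in the union, over the six sites $z\notin\{x,y\}$ adjacent to $\{x,y\}$, of the events $\{\phi(x),\phi(y),\phi(z)\ge n\}$. Since $\{x,y,z\}$ is connected, it suffices to show
\begin{align*}
\P_\beta\bigl(\phi\ge n \text{ on a connected triple}\bigr)\le C_\beta nJ^{4n}.
\end{align*}
This is the main obstacle. The general multipoint estimate \eqref{eq:restricted-multipoint} only gives $e^{-3\beta n\sqrt3}=J^{3\sqrt3 n/2}$. Since $3\sqrt3/2\approx2.6<4$, it is too weak. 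Instead, we would redo the Peierls/cylinder argument of \cite[Section~4.3]{lacoin2018wetting} for three connected sites.

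The idea is to decompose the event $\{\phi\ge n$ on the triple$\}$ according to the nested cylinders whose interiors contain the three sites. Suppose $n$ levels of contours surround the triple. If a level's contour contains all three sites, its length is at least $8$ for a straight triple and at least $8$ for an L-shape. If it contains two of the sites, its length is at least $6$; if it contains one, at least $4$. A contour of length $\ell$ at a given level costs $e^{-\beta\ell}=J^{\ell/2}$. Each level must contribute roughly $J^{3}$ per site group, but the cheapest way to lift three sites is to split them into groups, for instance a pair plus a singleton. The point is that once the sites separate, the costs add rather than share a contour: a shared contour gives $J^{4}$ per level, while a split gives at least $J^{3}\cdot J^{2}$ per level. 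The only regime where the cost per level stays below $J^4$ is when the contour simultaneously encloses a pair, at cost $J^3$, with the third site lifted separately over only a few levels. A short direct enumeration of these mixed configurations would be done here. Summing over the level $j$ at which the sites split produces the factor $n$, and yields $C_\beta nJ^{4n}$. This matches the error in \eqref{eq:infinite-volume-sharp-two-point}, whose proof in \cite[Proposition~4.6]{lacoin2018wetting} is exactly this kind of enumeration, so we would quote its internal estimate.

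Given \eqref{eq:infinite-volume-pair-in-larger-cluster}, we have
\begin{align*}
\P_\beta(P^{\mathrm{hi}}_{e,n})=\P_\beta\bigl(\min\{\phi(x),\phi(y)\}\ge n\bigr)-O(nJ^{4n}),
\end{align*}
and \eqref{eq:infinite-volume-sharp-two-point} then gives \eqref{eq:infinite-volume-exact-pair}.
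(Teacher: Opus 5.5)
\textbf{Gap: the connected-triple estimate is never proved.} Most of your reductions are correct. The singleton bound via $\{\phi(x)\ge n\}\setminus S^{\mathrm{hi}}_{x,n}\subset\bigcup_{y\sim x}\{\phi(x)\ge n,\ \phi(y)\ge n\}$ works, and it is more elementary than the paper, which reads \eqref{eq:infinite-volume-exact-singleton} off \cite[Proposition~4.6, (4.12)]{lacoin2018wetting}. The inclusion over the six boundary sites of $\{x,y\}$ is correct, and so is the final subtraction giving \eqref{eq:infinite-volume-exact-pair}.

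Both pair bounds, however, rest on $\P_\beta(\phi\ge n\text{ on a connected triple})\le C_\beta nJ^{4n}$, which you flag as the main obstacle and then leave unproved. Nothing quoted in the paper supplies it: as you note, \eqref{eq:restricted-multipoint} gives only $J^{3\sqrt3 n/2}$, which is not even $o(J^{3n})$. Appealing to an internal estimate of \cite[Proposition~4.6]{lacoin2018wetting} does not identify a stated result.

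The heuristic you sketch is also wrong on its main points. Splitting the triple is never cheaper than sharing a contour. A pair plus a singleton costs $J^{3}\cdot J^{2}=J^5$ per unit of intensity, against $J^4$ for a shared contour of length $8$. So summing over the level at which the sites split gives a geometric series $\sum_j J^{4n}J^{n-j}\le CJ^{4n}$ and produces no factor $n$.

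The factor $n$ is genuine but has a different origin. An L-shaped triple is enclosed by two distinct contours of length $8$: the boundary of the L-tromino and the boundary of the $2\times2$ block containing it. There are $n+1$ ways of splitting total intensity $n$ between these two nested cylinders, which gives $\P_\beta(\phi\ge n\text{ on the L})\asymp nJ^{4n}$. For a straight triple the length-$8$ contour is unique and the bound is $CJ^{4n}$.

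\textbf{How to close it.} You can run a Peierls union bound modelled on Step~2 of the proof of Proposition~\ref{prop:conditioned-cell-peak-estimates}, now with positive cylinders covering the triple $S$. Prove it uniformly for $\P_{N,\beta}$ by cylinder erasure, then pass to $\P_\beta$ with Theorem~\ref{thm:exponential-convergence-SOS}. Fix the intensities of all contours other than the length-$8$ ones enclosing $S$, and let $a_v$ be their total contribution at $v\in S$. Summing the length-$8$ intensities jointly gives at most $C(N'+1)e^{-8\beta N'}$, with $N'=(n-\min_v a_v)_+\le n$. Using $e^{8\beta\min_v a_v}\le e^{(8\beta/3)\sum_v a_v}$, each remaining contour contributes a factor $(1-e^{-\beta(\ell_\delta-8r_\delta/3)})^{-1}$, where $r_\delta=|\overline\delta\cap S|$. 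Since $\ell_\delta-8r_\delta/3\ge\ell_\delta/9$ for all of them, the product is bounded for $\beta$ large, and you get $C_\beta nJ^{4n}$.

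Alternatively, follow the paper, which needs no triple estimate. It takes $\P_\beta(q(\phi,x,n)=2)$ directly from \cite[(4.12)]{lacoin2018wetting}. It splits $\{q(\phi,x,n)=2\}$ into the four equiprobable events $P^{\mathrm{hi}}_{\{x,y\},n}$ to obtain \eqref{eq:infinite-volume-exact-pair}. It then gets \eqref{eq:infinite-volume-pair-in-larger-cluster} by subtraction from \eqref{eq:infinite-volume-sharp-two-point}, which is the reverse of your order.
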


\begin{proof}[Proof of Corollary ~\ref{cor:exact-peak-clusters}]
Let $q(\phi,x,n)$ denote the cardinality of the connected component of $\phi^{-1}[n,\infty)$ containing $x$, with the convention $q(\phi,x,n)=0$ when $\phi(x)<n$.
By \cite[Proposition~4.6, (4.12)]{lacoin2018wetting},
\begin{align}
    \left|
        \P_\beta
        \bigl(
            q(\phi,x,n)=1
        \bigr)
        -
        a_1(\beta)J^{2n}
    \right|
    &\le
    C_\beta J^{3n},
    \label{eq:q-one-infinite-volume}
    \\
    \left|
        \P_\beta
        \bigl(
            q(\phi,x,n)=2
        \bigr)
        -
        4a_2(\beta)J^{3n}
    \right|
    &\le
    C_\beta nJ^{4n}.
    \label{eq:q-two-infinite-volume}
\end{align}
Since
\begin{align*}
    S_{x,n}^{\mathrm{hi}}
    &=
    \{
        q(\phi,x,n)=1
    \},
\end{align*}
the first estimate follows immediately from \eqref{eq:q-one-infinite-volume}.

For the second estimate, observe that the event $\{q(\phi,x,n)=2\}$ is the disjoint union, over the four nearest neighbors $y$ of $x$, of the events $P_{\{x,y\},n}^{\mathrm{hi}}$.
By translation and lattice symmetries under $\P_\beta$, these four events have the same probability.
Hence
\begin{align*}
    \P_\beta
    \bigl(
        P_{e,n}^{\mathrm{hi}}
    \bigr)
    &=
    \frac14
    \P_\beta
    \bigl(
        q(\phi,x,n)=2
    \bigr),
\end{align*}
and \eqref{eq:infinite-volume-exact-pair} follows from \eqref{eq:q-two-infinite-volume}.

Finally, since
\begin{align*}
    P_{e,n}^{\mathrm{hi}}
\subset
\{\phi(x)\ge n,\phi(y)\ge n\},
\end{align*}
we have
\begin{align*}
&
\P_\beta
\left(
    \{\phi(x)\ge n,\phi(y)\ge n\}
    \setminus
    P_{e,n}^{\mathrm{hi}}
\right)
\\
&=
\P_\beta
\bigl(
    \phi(x)\ge n,\phi(y)\ge n
\bigr)
-
\P_\beta
\bigl(
    P_{e,n}^{\mathrm{hi}}
\bigr).
\end{align*}
Therefore, by \eqref{eq:infinite-volume-sharp-two-point} and \eqref{eq:infinite-volume-exact-pair},
\begin{align*}
&
\P_\beta
\left(
    \{\phi(x)\ge n,\phi(y)\ge n\}
    \setminus
    P_{e,n}^{\mathrm{hi}}
\right)
\\
&\le
\left|
\P_\beta
\bigl(
    \phi(x)\ge n,\phi(y)\ge n
\bigr)
-
a_2(\beta)J^{3n}
\right|+
\left|
\P_\beta
\bigl(
    P_{e,n}^{\mathrm{hi}}
\bigr)
-
a_2(\beta)J^{3n}
\right|
\\
&\le
C_\beta nJ^{4n}.
\end{align*}
This proves \eqref{eq:infinite-volume-pair-in-larger-cluster}.
\end{proof}

The distinction between \eqref{eq:restricted-one-point}--\eqref{eq:restricted-multipoint} and Corollary~\ref{cor:exact-peak-clusters} will be important later.
The contour-restricted estimates are robust enough to be applied throughout the random cell regions produced by the coarse-graining, whereas the exact constants $a_1(\beta)$ and $a_2(\beta)$ are recovered only in the deterministic cores by comparison with the infinite-volume measure.
This is carried out in Subsection~\ref{subsec:5.1}.

\section{Coarse-grained reduction of the lower bound}
\label{chap:4}

The annealed comparison already yields the upper bound in Theorem~\ref{thm:main-result}.
Thus, the main remaining step is to establish the following matching quenched lower bound.

\begin{proposition}[Sharp quenched lower bound]
\label{prop:sharp-lower-bound}
Assume that $\beta$ is sufficiently large and fix $\alpha\ge0$.
Then, as $u\rightarrow 0+$,
\begin{align}
\overline{F}_\beta(\alpha,u)
\ge
\overline{F}_{\mathrm{homo}}(\beta,u)-o(u^3).
\label{eq:sharp-quenched-lower-bound}
\end{align}
\end{proposition}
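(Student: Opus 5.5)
We follow the strategy already outlined in Subsection~\ref{subsec:2.4}. We fix $n$ in the localization window $J^n\asymp u$ and work with the high-level representation of Lemma~\ref{lemma:change-boundary-disordered}. Since that limit holds in $L^1(\P)$, it suffices to show that, for each such $n$,
\begin{align*}
\liminf_{N\to\infty}\frac{1}{N^2}\E\log \E_{N,\beta}\Bigl[e^{\sum_{x\in\phi^{-1}[n,\infty)}(u+\eta_x^{(\alpha)})-\overline H_\beta(\phi^{-1}[n,\infty))}\Bigr]
\ge a_1(\beta)uJ^{2n}-\frac{2a_2(\beta)(J^3-J^4)}{1-J^4}J^{3n}-o(u^3),
\end{align*}
uniformly over the window. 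Maximizing over $n$ then gives \eqref{eq:sharp-quenched-lower-bound}.

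\emph{Reduction.} First we restrict the expectation to configurations with no contour longer than a suitable power of $u^{-1}$. This is a good event whose probability is bounded below by $e^{-o(u^3)N^2}$, by Peierls bounds and the domination \eqref{eq:restricted-contour-domination}. Next we introduce a grid of mesh $M=\lfloor u^{-\chi}\rfloor$ and condition on the external contours that cross it. This splits $\Lambda_N$ into random regions, and the conditional laws on these regions are independent contour-restricted SOS measures. Each cell then contains a deterministic core at distance $\gg\log(1/u)$ from the cell boundary and from any crossing contour. We decompose the wall term pointwise. Exact singleton components in cores contribute $u+\eta_x$. Exact nearest-neighbor pair components in cores contribute $2u+\eta_x+\eta_y-\ell_2(\beta)$. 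Every other site of $\phi^{-1}[n,\infty)$ that belongs to a non-singleton component is bounded below by $-(c_2+|u|+|\eta_x|)$, per site or per occupied edge, using \eqref{eq:Hbar-linear-bound} and additivity \eqref{eq:Hbar-additivity}. The crucial point is that this bound is a sum of local terms, so it factorizes across regions; a component crossing region boundaries is simply charged this rough per-site cost. Jensen's inequality, applied cell by cell under the conditional product measure, then turns the logarithm into expectations. The global lower bound thus splits into (i) sharp core terms, (ii) rough contour-region terms, and (iii) a cross-edge correction.

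\emph{Estimates.} For the cores we compare the conditioned law with $\P_\beta$. Theorem~\ref{thm:exponential-convergence-SOS} and \eqref{eq:restricted-sharp-one-point} give an error of order $e^{-c\,\mathrm{dist}}\ll u^3$. Corollary~\ref{cor:exact-peak-clusters} then shows that the expected number of singletons per site is $a_1J^{2n}+O(J^{3n})$, and the expected number of exact pairs per edge is $a_2J^{3n}+O(nJ^{4n})$. There are two edges per site, and $\ell_2-J^{2n}$-type corrections are absorbed; after the homogeneous bookkeeping of \cite{lacoin2018wetting}, the coefficient of $J^{3n}$ is $2a_2(J^3-J^4)/(1-J^4)$. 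Here the combination with the singleton term $-J^{2n}$ type correction reproduces $\ell_2$, and I expect to need to match this carefully against the formula. Components of size $\ge3$ and pairs sitting inside larger clusters cost $O(nJ^{4n})+O(e^{-3\beta n\sqrt3})=o(u^3)$, by \eqref{eq:restricted-multipoint} and \eqref{eq:infinite-volume-pair-in-larger-cluster}.

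\emph{Main obstacle: disorder.} The hardest step is to show that the disorder changes the quenched logarithm of each cell's contribution only by $o(M^2u^3)$. A naive Jensen bound keeps $\E[\eta_x]=-\lambda(\alpha)<0$ per contact, which is of order $J^{2n}\asymp u^2\gg u^3$. This is fatal, so Jensen must not be applied in $\phi$ alone. Instead, for each cell we apply Jensen only to the non-sharp parts, and we keep the exact cell partition function $Z_{\rm cell}^\omega=E[\exp(\sum_{\rm singletons}(u+\eta_x)+\dots)]$, whose annealed mean reproduces the homogeneous weight. We then run a second-moment argument: since contacts are sparse (density $J^{2n}$), $\mathrm{Var}(Z^\omega/\E Z^\omega)\lesssim M^2J^{4n}(e^{\lambda(2\alpha)-2\lambda(\alpha)}-1)$, and for $\chi$ small this is $o(1)$, with quenched$=$annealed up to $o(M^2u^3)$ once one truncates on the event where the ratio is bounded below. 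A truncation $|\eta_x|\le\log(1/u)$, with a controlled cost, handles the heavy tails. Finally, the contour-region and cross-edge terms are supported on a vanishing fraction $O(u^{\chi}+u^{c})$ of sites, with density $J^{2n}$ times $O(\log(1/u))$ penalties, so they contribute $o(N^2u^3)$ provided $\chi$ is tuned appropriately.
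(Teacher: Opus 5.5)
Your skeleton matches the paper's: high-level representation, localization window, short-contour restriction, grid of mesh $M=\lfloor u^{-\chi}\rfloor$, cores, and a replica second moment for the disorder in the cells. The gap is in the error accounting off the cores, and it is fatal as written.

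The sites outside the cores are the boundary strips, the contour interiors, and every site adjacent to a different region; the grid lines alone give at least of order $N^2/M$ of the latter. Together they form a fraction at least of order $1/M\approx u^{\chi}$ of $\Lambda_N$, and \emph{isolated} contacts there have density $J^{2n}\asymp u^2$. Any loss of order one per contact on that set costs at least of order $u^{2+\chi}$ per site of $\Lambda_N$. This covers your charge $-(c_2+|u|+|\eta_x|)$, the $O(\log(1/u))$ penalty after truncating $\eta$, and Jensen applied to the ``non-sharp parts'', which costs $\lambda(\alpha)$ per contact exactly like the naive Jensen you rightly reject in the bulk. Since $u^{2+\chi}$ is not $o(u^3)$ for $\chi<1$, your final estimate (``density $J^{2n}$ times $O(\log(1/u))$ penalties'') fails.

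Tuning $\chi$ does not help, because $\chi<1$ is forced by your own cell analysis. You need $M^2J^{2n}\to0$ so that a cell has $O(1)$ contacts, the disorder ratio stays above $1/2$, and the logarithm can be linearised. Nor can you charge only sites of genuinely non-singleton components while keeping the bound local: for a site next to another region, its singleton status depends on the neighbouring region's field, so that bound does not factorise.

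The missing idea is that an isolated contact outside the cores may lose at most the reward $u$. The paper does this as follows:
\begin{itemize}
\item It keeps $\eta^{(\alpha)}_x\mathbf 1_{\{\phi(x)\le0\}}$ for every contact site inside the local partition function of its region.
\item It drops only the nonnegative $u$ where singleton status cannot be certified, at cost $uJ^{2n}n^4/M=o(u^3)$.
\item It charges the wall penalty per occupied edge via $\overline H_\beta(\mathcal C)\le c_2|\mathcal C|\le 2c_2|E(\mathcal C)|$; occupied edges have density $J^{3n}$.
\end{itemize}
The only non-local term left is the number of occupied cross-region edges, which carries no disorder. Jensen under the product of the tilted local measures then costs only $J^{4n}$ per such edge, since its endpoints lie in independent factors. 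The disorder in contour regions is handled by the same replica second moment as in the cells, at cost $J^{4n}$ per site.

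Smaller points:
\begin{itemize}
\item $\E[(X-1)^2]=\mu^{\otimes2}[e^{\Delta_\alpha\mathcal R_{12}}]-1$ is $O(M^2J^{4n})$ only if the overlap $\mathcal R_{12}$ is bounded, and truncating $\eta$ does not achieve that. The paper imposes at most $100$ contacts per cell, at cost $CM^{200}J^{15n}\le u^{10}$ by \eqref{eq:restricted-multipoint}.
\item $e^{-3\beta n\sqrt3}=J^{\frac{3\sqrt3}{2}n}\approx u^{2.6}$ is not $o(u^3)$. Clusters of size at least three must instead be charged to an occupied edge that is not an exact pair, using \eqref{eq:infinite-volume-pair-in-larger-cluster}.
\item The coefficient $\theta_2=1-e^{-\ell_2}$ comes simply from $e^{-\ell_2I_2}\ge1-\theta_2I_2$ in the first-order expansion of the sparse cell weight, with no singleton correction. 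Jensen on the pair term would give $\ell_2$, and $\ell_2-\theta_2\approx J^6/2$ shifts the free energy by $\Theta(u^3)$.
\item Theorem~\ref{thm:exponential-convergence-SOS} and the one-point bound do not transfer exact-pair probabilities to the conditioned cell law. That needs a comparison for intersections of increasing and decreasing local events, as in Lemma~\ref{lem:local-comparison-conditioned-cell}.
\end{itemize}
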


Section~\ref{chap:4} reduces Proposition~\ref{prop:sharp-lower-bound} to estimates on coarse-grained cells and exceptional regions.
These estimates are established in Section~\ref{chap:5}.

By Lemma~\ref{lemma:change-boundary-disordered}, the problem can be reformulated in terms of the rare high-level set of the unconstrained SOS field, with the interaction given by the random contact reward and the shifted wall penalty introduced in Subsection~\ref{subsec:3.1}.

We first identify, from the variational formula \eqref{eq:homogeneous-wetting-variational}, the range of heights relevant at order $u^3$.
We then restrict to configurations with no long contours and introduce a coarse-grained decomposition under which the SOS field becomes conditionally independent across the resulting regions.
Finally, we isolate the sharp one-site and nearest-neighbor contributions from the lower-order geometric and cross-region terms, leading to the coarse-grained lower bound used in Section~\ref{chap:5}.

\subsection{Finite-volume reduction and localization height}
\label{subsec:4.1}

We first introduce the finite-volume quantity to which the coarse-graining argument will be applied and identify the range of boundary heights relevant at order $u^3$.
Fix an integer boundary height $n\ge1$.

Recall the notation introduced in Subsection~\ref{subsec:2.2} and \eqref{eq:Hbar-def}.
We define
\begin{align}
S_N^\omega(\phi)
&:=
\sum_{x\in\phi^{-1}(\mathbb Z_-)}
\left(
u+\eta_x^{(\alpha)}
\right)
-
\overline H_\beta
\bigl(
\phi^{-1}(\mathbb Z_-)
\bigr).
\label{eq:finite-volume-wall-functional}
\end{align}
The dependence of $S_N^\omega$ on $u$ and $\alpha$ is suppressed from the notation.
The corresponding reduced finite-volume partition function is
\begin{align}
Z_{N,n}^\omega(u)
&:=
\E_{N,\beta}^n
\left[
\exp\left\{
S_N^\omega(\phi)
\right\}
\right].
\label{eq:finite-volume-reduced-partition}
\end{align}

By reflection, this is precisely the finite-volume high-level functional appearing in Lemma~\ref{lemma:change-boundary-disordered}.
Indeed, if $\psi:=n-\phi$, then under $\P_{N,\beta}^n$ the field $\psi$ has law $\P_{N,\beta}$ and $\phi^{-1}(\mathbb Z_-)=\psi^{-1}[n,\infty)$.
Consequently,
\begin{align}
Z_{N,n}^\omega(u)
=
\E_{N,\beta}
\left[
\exp\left\{
\sum_{x\in\psi^{-1}[n,\infty)}
\left(
u+\eta_x^{(\alpha)}
\right)
-
\overline H_\beta
\bigl(
\psi^{-1}[n,\infty)
\bigr)
\right\}
\right].
\label{eq:reflection-finite-volume-partition}
\end{align}
It follows from Lemma~\ref{lemma:change-boundary-disordered} and Proposition~\ref{prop:free-energy-basic-properties} that, for every fixed $n\ge1$,
\begin{align}
\overline{F}_\beta(\alpha,u)
&=
\lim_{N \rightarrow\infty}
\frac{1}{N^2}
\log Z_{N,n}^\omega(u)
\nonumber\\
&=
\lim_{N\rightarrow\infty}
\frac{1}{N^2}
\E\log Z_{N,n}^\omega(u).
\label{eq:free-energy-finite-volume-representation}
\end{align}
The first convergence holds $\P$-almost surely and in $L^1(\P)$.
Although the boundary height will later depend on $u$, it is fixed when the limit $N\rightarrow\infty$ is taken.
We therefore seek a lower bound on $\E\log Z_{N,n}^\omega(u)$ that is uniform over the relevant choices of $n$.

Recall from \eqref{eq:ell2-def} that
\begin{align*}
\ell_2(\beta)
&=
\log\left(
\frac{1-J^4}{1-J^3}
\right)
\end{align*}
is the shifted wall penalty associated with a nearest-neighbor pair.
Set
\begin{align}
\theta_2(\beta)
&:=
1-e^{-\ell_2(\beta)}
=
\frac{J^3-J^4}{1-J^4}.
\label{eq:theta2-def}
\end{align}
Then the homogeneous variational function \eqref{eq:homogeneous-wetting-variational} can be written as
\begin{align}
\overline{F}_{\mathrm{homo}}(\beta,u)
&=
\max_{m\in\mathbb Z_+}
\left\{
a_1(\beta)J^{2m}u
-
2a_2(\beta)\theta_2(\beta)J^{3m}
\right\}.
\label{eq:homogeneous-variational-theta2}
\end{align}
The following lemma identifies the scale of the maximizing height.

\begin{lemma}[Localization window]
\label{lem:localization-window}
Let $n_u$ be a maximizer in \eqref{eq:homogeneous-variational-theta2}.
There exist constants $0<c_\beta<C_\beta<\infty$ such that, for all sufficiently small $u>0$,
\begin{align}
c_\text{loc}(\beta) u
\le
J^{n_u}
\le
C_\text{loc}(\beta) u.
\label{eq:localization-window}
\end{align}
\end{lemma}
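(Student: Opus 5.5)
We treat \eqref{eq:homogeneous-variational-theta2} as an elementary one-dimensional optimization. Write $A:=a_1(\beta)$, $B:=2a_2(\beta)\theta_2(\beta)$, which are positive and finite for $\beta$ large by the discussion after \eqref{eq:a1-beta}--\eqref{eq:a2-beta} and \eqref{eq:theta2-def}. Set $t:=J^m\in(0,1]$ and $g_u(t):=Aut^2-Bt^3$. Then the objective is $g_u(J^m)$, and $n_u$ maximizes $g_u$ over the discrete set $\{J^m:m\in\mathbb Z_+\}$. The continuous maximizer is $t_*=\frac{2Au}{3B}$, with $g_u(t_*)=\frac{4A^3}{27B^2}u^3$. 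Writing $t=su$, we get $g_u(su)=u^3\bigl(As^2-Bs^3\bigr)=:u^3\,G(s)$, so everything reduces to the fixed function $G(s)=As^2-Bs^3$.

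\emph{Lower bound on the maximum.} For $u$ small, $t_*<1$, so there is $m\in\mathbb Z_+$ with $J t_*< J^m\le t_*$, that is, $s=J^m/u\in(Js_*,s_*]$ with $s_*=2A/(3B)$. Since $G$ is increasing on $(0,s_*]$, we get
\begin{align*}
\overline F_{\mathrm{homo}}(\beta,u)\ge u^3 G(Js_*)=:\kappa u^3,
\end{align*}
and $\kappa>0$ because $0<Js_*<s_*<A/B$.

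\emph{Localizing $n_u$.} Let $s_u:=J^{n_u}/u$. Then $u^3G(s_u)\ge\kappa u^3$, so $G(s_u)\ge\kappa>0$. Since $G(s)\le As^2$, this forces $s_u\ge\sqrt{\kappa/A}$. Moreover, $G(s)>0$ forces $s<A/B$. Hence
\begin{align*}
\sqrt{\kappa/A}\,u\le J^{n_u}\le (A/B)\,u,
\end{align*}
which is \eqref{eq:localization-window} with $c_{\mathrm{loc}}=\sqrt{\kappa/A}$ and $C_{\mathrm{loc}}=A/B$.

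The only point needing care is the lower bound on the maximum. It is there that the discreteness of the admissible set $\{J^m\}$ enters: consecutive points differ by the fixed factor $J$, so some admissible point lies within a factor $J$ of $t_*$. The condition that $u$ be small is used only to guarantee $t_*<1$, so that such an $m\ge0$ exists.
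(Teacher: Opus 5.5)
Your proof is correct and follows essentially the same route as the paper. In both arguments, an admissible $m$ with $J^m$ within a factor $J$ of a point of order $u$ gives $\overline F_{\mathrm{homo}}(\beta,u)\ge \kappa u^3$; positivity of the maximum then gives $J^{n_u}\le (A/B)u$, and the bound $G(s)\le As^2$ gives the lower bound on $J^{n_u}$. The only difference is cosmetic: the paper compares with a small multiple $\rho_\beta u$, chosen so that the cubic term is at most half the quadratic one, rather than with the continuous maximizer $t_*=2Au/(3B)$ and the rescaled function $G$.
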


\begin{proof}[Proof of Lemma ~\ref{lem:localization-window}]
Choose $\rho_\beta>0$ sufficiently small that $2a_2(\beta)\theta_2(\beta)\rho_\beta\le a_1(\beta)/2$, and, for each sufficiently small $u$, choose $m=m(u)\in\mathbb Z_+$ such that $J\rho_\beta u<J^m\le\rho_\beta u$.
By \eqref{eq:homogeneous-variational-theta2},
\begin{align}
\overline{F}_{\mathrm{homo}}(\beta,u)
&\ge
a_1(\beta)uJ^{2m}
-
2a_2(\beta)\theta_2(\beta)J^{3m}
\notag\\
&\ge
\frac{a_1(\beta)}{2}uJ^{2m}
\notag\\
&\ge
\frac{a_1(\beta)}{2}J^2\rho_\beta^2u^3.
\label{eq:positive-variational-lower-bound}
\end{align}

If $J^n> a_1(\beta)u/ \bigl(2a_2(\beta)\theta_2(\beta)\bigr)$, then the corresponding variational term is negative.
Since the maximum is positive by \eqref{eq:positive-variational-lower-bound}, every maximizer satisfies
\begin{align*}
J^{n_u}
\le
\frac{a_1(\beta)}
{2a_2(\beta)\theta_2(\beta)}u.
\end{align*}

For the lower bound, choose $c_\beta>0$ sufficiently small that $c_\beta<J\rho_\beta/\sqrt{2}$.
If $J^n<c_\beta u$, then
\begin{align*}
a_1(\beta)uJ^{2n}
-
2a_2(\beta)\theta_2(\beta)J^{3n}
&\le
a_1(\beta)c_\beta^2u^3
<
\overline{F}_{\mathrm{homo}}(\beta,u),
\end{align*}
where the last inequality follows from \eqref{eq:positive-variational-lower-bound}.
Hence such an $n$ cannot be a maximizer.
\end{proof}

Fix constants $c_\beta$ and $C_\beta$ as in Lemma~\ref{lem:localization-window}.
In view of the variational formula, it is now sufficient to prove, uniformly over integers $n$ satisfying
\begin{align}
c_\text{loc}(\beta) u
\le
J^n
\le
C_\text{loc}(\beta) u,
\label{eq:admissible-localization-window}
\end{align}
that
\begin{align}
\overline{F}_\beta(\alpha,u)
&\ge
a_1(\beta)uJ^{2n}
-
2a_2(\beta)\theta_2(\beta)J^{3n}
-
o(u^3).
\label{eq:fixed-height-sharp-lower-bound}
\end{align}
Indeed, choosing $n=n_u$ and using \eqref{eq:homogeneous-variational-theta2} then gives Proposition~\ref{prop:sharp-lower-bound}.

From now on, we fix an integer $n=n(u)$ satisfying \eqref{eq:admissible-localization-window}.
In particular,
\begin{align}
J^n
&\asymp u,
\qquad
n
=
\frac{1}{2\beta}
\log\frac{1}{u}
+
O_\beta(1).
\label{eq:localization-height-scale}
\end{align}

\subsection{Good-contour restriction and coarse-graining}
\label{subsec:4.2}

We now introduce the coarse-grained decomposition used in the lower-bound argument, following the construction of \cite[Section~4.2]{lacoin2021solid}.
Recall from \eqref{eq:localization-height-scale} that $J^n\asymp u$.
We first restrict the SOS field to configurations containing no long contours; the corresponding free-energy cost is negligible at order $u^3$.
We then condition on the external contours intersecting a grid of mesh size $M$.
Once these contours are fixed, the SOS field decomposes into conditionally independent fields on the resulting regions.

This is the same conditional-independence mechanism as in the disordered pinning setting of \cite{lacoin2021solid}.
In the present wetting model, however, this does not by itself yield a factorization of the interaction: the shifted wall penalty depends on connected components of the contact set, which may intersect more than one coarse-grained region.
This additional difficulty will be addressed in Subsection~\ref{subsec:4.4}.

Recall that $\Upsilon_n(\phi)$ denotes the collection of signed contours of $\phi$ with boundary condition $n$.
For a contour $\gamma$, we write $\widetilde\gamma$ for its geometric contour and $\overline\gamma$ for its interior.
Define
\begin{align}
A_{N,n}
&:=
\left\{
|\widetilde\gamma|\le n^4
\text{ for every }
\gamma\in\Upsilon_n(\phi)
\right\}.
\label{eq:good-event}
\end{align}
We denote the SOS measure conditioned on $A_{N,n}$ by
\begin{align*}
    \widetilde{\P}_{N,\beta}^n(\,\cdot\,)
    &:=
    \P_{N,\beta}^n
    \bigl(
        \,\cdot\mid A_{N,n}
    \bigr),
\end{align*}
and write $\widetilde{\E}_{N,\beta}^n$ for the corresponding expectation.

\begin{lemma} \cite[Lemma~4.2]{lacoin2021solid}
\label{lem:good-event}
Assume that $\beta>2\log 3$.
There exist constants $c=c(\beta)>0$ and $C=C(\beta)<\infty$ such that, for every $N,n\ge1$,
\begin{align}
\P_{N,\beta}^n(A_{N,n})
&\ge
\exp\left\{
-CN^2e^{-cn^4}
\right\}.
\label{eq:good-event-probability}
\end{align}
\end{lemma}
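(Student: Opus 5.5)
The plan is a Peierls-type lower bound via FKG for the contour process. Write $A_{N,n}=\bigcap_{\gamma\in\mathsf{Cont}_{\Lambda_N},\,|\widetilde\gamma|>n^4}\{\gamma\notin\Upsilon_n(\phi)\}$. Each event $\{\gamma\notin\Upsilon_n(\phi)\}$ is decreasing in the contour collection. By Lemma~\ref{lem:contour-distribution-domination}, under $\P^n_{N,\beta}$ the contour collection is the product measure $\mathbb Q_{\Lambda_N,\beta}$ conditioned on the decreasing event of compatibility. I would use the standard fact that a product measure conditioned on a decreasing event is still positively associated for decreasing events. Indeed, for decreasing $D_1,D_2$ and compatibility event $K$, Harris gives $\mathbb Q(D_1D_2K)\mathbb Q(K)\ge \mathbb Q(D_1K)\mathbb Q(D_2K)$, since $D_iK$ are decreasing. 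This only needs Harris applied to decreasing events, so it is legitimate. Iterating over the finitely many long contours gives
\begin{align*}
\P^n_{N,\beta}(A_{N,n})\ge \prod_{\gamma:\,|\widetilde\gamma|>n^4}\P^n_{N,\beta}\bigl(\gamma\notin\Upsilon_n(\phi)\bigr).
\end{align*}

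Next I bound each factor. By the stochastic domination in Lemma~\ref{lem:contour-distribution-domination},
\begin{align*}
\P^n_{N,\beta}\bigl(\gamma\in\Upsilon_n(\phi)\bigr)\le e^{-\beta|\widetilde\gamma|}\le e^{-\beta n^4}\le \tfrac12 .
\end{align*}
Using $\log(1-p)\ge -2p$ for $p\le 1/2$, we obtain
\begin{align*}
\log\P^n_{N,\beta}(A_{N,n})\ge -2\sum_{\gamma:\,|\widetilde\gamma|>n^4}e^{-\beta|\widetilde\gamma|}.
\end{align*}

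Finally I count contours. Each contour of length $\ell$ with nonempty interior in $\Lambda_N$ passes through a dual edge adjacent to a site of $\Lambda_N$, and there are at most $C N^2$ such starting edges. From a starting edge, there are at most $3^{\ell}$ continuations, since a contour sequence has at most three choices at each step. There are also two signs. Hence
\begin{align*}
\sum_{\gamma:\,|\widetilde\gamma|>n^4}e^{-\beta|\widetilde\gamma|}\le 2CN^2\sum_{\ell>n^4}3^\ell e^{-\beta\ell}.
\end{align*}
Since $\beta>2\log 3$, we have $3e^{-\beta}\le e^{-\beta/2}$. The geometric series is therefore at most $C'e^{-(\beta/2)n^4}$, which gives the claim with $c=\beta/2$.

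The only delicate step is the correlation inequality under the compatibility conditioning. If the association argument feels shaky, I would instead argue directly with a map. Removing all long contours from a compatible family $\Xi$ keeps it compatible, because compatibility is pairwise and therefore decreasing. This map sends each configuration of $\mathbb Q$ weight $w$ to the truncated family, whose weight is $w/\prod e^{-\beta|\widetilde\gamma|}$ over the removed contours. Summing over preimages then bounds $\mathbb Q(\text{compatible})$ by $\mathbb Q(\text{compatible}\cap \text{no long})\prod(1+e^{-\beta|\widetilde\gamma|})$ after normalising. That gives $\P(A)\ge \exp\{-\sum e^{-\beta|\widetilde\gamma|}\}$ directly, and the counting then finishes as above.
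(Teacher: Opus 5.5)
\textbf{Gap in Step 1.} The positive-association step is false. Harris, applied under the product measure to the decreasing events $D_1K$ and $D_2K$, gives only $\mathbb Q(D_1D_2K)\ge\mathbb Q(D_1K)\,\mathbb Q(D_2K)$. The inequality you actually need is $\mathbb Q(D_1D_2K)\,\mathbb Q(K)\ge\mathbb Q(D_1K)\,\mathbb Q(D_2K)$. It has an extra factor $\mathbb Q(K)\le 1$ on the left, so it is strictly stronger, and it fails in general. A product measure conditioned on a decreasing event need not be positively associated, and a hard-core constraint such as compatibility typically produces \emph{negative} correlations between absence events. For example, let $\gamma_1,\gamma_2$ be incompatible, set $p_i=e^{-\beta|\widetilde\gamma_i|}$, and let $K$ be the event that they are not both present. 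Then
\[
\mathbb Q(\gamma_1,\gamma_2\notin\Xi\mid K)=\frac{(1-p_1)(1-p_2)}{1-p_1p_2}<\frac{(1-p_1)(1-p_2)}{(1-p_1p_2)^2}=\mathbb Q(\gamma_1\notin\Xi\mid K)\,\mathbb Q(\gamma_2\notin\Xi\mid K).
\]
So your factor-by-factor bound $\P^n_{N,\beta}(A_{N,n})\ge\prod_\gamma\P^n_{N,\beta}(\gamma\notin\Upsilon_n(\phi))$ is unjustified as argued.

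\textbf{How to repair it.} Do not factorize under the conditioned law at all. The event $A_{N,n}$ is decreasing in the contour collection. Harris applied to the two decreasing events $A_{N,n}$ and $K$ under $\mathbb Q_{\Lambda_N,\beta}$ gives $\mathbb Q(A_{N,n}\cap K)\ge\mathbb Q(A_{N,n})\,\mathbb Q(K)$. This is exactly the stochastic domination of Lemma~\ref{lem:contour-distribution-domination}, applied to the increasing event $A_{N,n}^c$. Hence
\[
\P^n_{N,\beta}(A_{N,n})\ge\mathbb Q_{\Lambda_N,\beta}(A_{N,n})=\prod_{\gamma\in\mathsf{Cont}_{\Lambda_N}:\,|\widetilde\gamma|>n^4}\bigl(1-e^{-\beta|\widetilde\gamma|}\bigr),
\]
where the product is now exact by independence under $\mathbb Q_{\Lambda_N,\beta}$. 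From here your remaining steps finish the proof unchanged: the bound $\log(1-p)\ge-2p$, the $CN^2\,3^\ell$ contour count, and $3e^{-\beta}\le e^{-\beta/2}$ for $\beta>2\log 3$.

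Your fallback injection argument is this same domination done by hand, and it is sound once the weights are corrected. Under the normalized $\mathbb Q_{\Lambda_N,\beta}$, deleting $\gamma$ multiplies the weight by $(1-e^{-\beta|\widetilde\gamma|})/e^{-\beta|\widetilde\gamma|}$, not by $e^{\beta|\widetilde\gamma|}$. Summing over preimages therefore produces the factor $\prod(1-e^{-\beta|\widetilde\gamma|})^{-1}$ rather than $\prod(1+e^{-\beta|\widetilde\gamma|})$, which yields the same bound as above. The paper itself gives no argument beyond citing Lacoin's Lemma~4.2. The one-line domination above is the natural proof given the paper's Lemma~\ref{lem:contour-distribution-domination}, and your fallback should be promoted to the main argument.
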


\begin{proof}[Proof of Lemma ~\ref{lem:good-event}]
This is \cite[Lemma~4.2]{lacoin2021solid}.
\end{proof}

Consequently, for every nonnegative random weight $G^\omega(\phi)$,
\begin{align}
\E\log
\E_{N,\beta}^n
\left[
G^\omega(\phi)
\right]
&\ge
-CN^2e^{-cn^4}
+
\E\log
\widetilde{\E}_{N,\beta}^n
\left[
G^\omega(\phi)
\right].
\label{eq:restriction-to-good-event}
\end{align}
Since $n\asymp|\log u|$ by \eqref{eq:localization-height-scale}, we have $e^{-cn^4}=o(u^p)$ for every $p>0$.
Hence the first term on the right-hand side of \eqref{eq:restriction-to-good-event} is $o(N^2u^3)$, uniformly over the localization window.

We next introduce the coarse-graining grid.
Fix
\begin{align}
M
&:=
\lfloor u^{-\chi}\rfloor,
\qquad
0<\chi<\frac{1}{50}.
\label{eq:coarse-graining-scale}
\end{align}
Since the free-energy limit exists, it is enough to let $N\rightarrow\infty$ along multiples of $M$.
We therefore write $N=kM$ for some integer $k\ge1$.

Let
\begin{align}
H_M
&:=
\left\{
x^*=(x_1^*,x_2^*)\in(\Z^2)^*:
x_1^*-\frac12\in M\Z
\text{ or }
x_2^*-\frac12\in M\Z
\right\}
\label{eq:dual-grid}
\end{align}
be the dual grid of mesh size $M$.
It divides $\Lambda_N$ into the deterministic square cells
\begin{align}
C_z
&:=
Mz+\Lambda_M,
\qquad
z\in\llbracket0,k-1\rrbracket^2.
\label{eq:deterministic-cells}
\end{align}
Let $\Upsilon_n^{\mathrm{ext}}(\phi)$ denote the collection of external contours, namely those contours whose interiors are maximal under inclusion.
We define the coarse-graining datum by
\begin{align}
U(\phi)
&:=
\left\{
\gamma\in\Upsilon_n^{\mathrm{ext}}(\phi):
\widetilde\gamma\cap H_M\ne\varnothing
\right\}.
\label{eq:grid-crossing-contours}
\end{align}
Let
\begin{align}
    \mathcal U_{N,n,M}
    &:=
    \left\{
        \Gamma:
        \widetilde{\P}_{N,\beta}^n
        \bigl(
            U=\Gamma
        \bigr)>0
    \right\}.
    \label{eq:admissible-coarse-grainings}
\end{align}
be the collection of admissible realizations of the coarse-graining datum.
Fix $\Gamma\in\mathcal U_{N,n,M}$.

Since the contours in $\Gamma$ are external, their interiors are pairwise disjoint.
We remove these interiors from the deterministic cells and define
\begin{align}
B_z
&:=
C_z
\setminus
\bigcup_{\gamma\in\Gamma}
\overline\gamma.
\label{eq:random-cell-region}
\end{align}
The sets $B_z$ need not be connected.
Together with the contour interiors, however, they form the disjoint decomposition
\begin{align}
\Lambda_N
&=
\left(
\mathop{\dot\bigcup}_{
z\in\llbracket0,k-1\rrbracket^2
}
B_z
\right)
\mathbin{\dot\cup}
\left(
\mathop{\dot\bigcup}_{\gamma\in\Gamma}
\overline\gamma
\right).
\label{eq:region-decomposition}
\end{align}

We denote the corresponding collection of regions by
\begin{align}
\mathcal R_{\Gamma}
&:=
\left\{
B_z:
z\in\llbracket0,k-1\rrbracket^2
\right\}
\cup
\left\{
\overline\gamma:
\gamma\in\Gamma
\right\}.
\label{eq:region-family}
\end{align}

For $R\in\mathcal R_{\Gamma}$, let
\begin{align}
\nu_R(\,\cdot\,)
&:=
\widetilde{\P}_{N,\beta}^n
\left(
\phi|_R\in\,\cdot\,
\,\middle|\,
U=\Gamma
\right).
\label{eq:conditional-region-law}
\end{align}
For cell regions we write
\begin{align}
    \nu_z:=\nu_{B_z},    
\end{align}\label{eq:defnu_z}
while for contour regions we write $\nu_\gamma:=\nu_{\overline\gamma}$.

Once the grid-crossing external contours are fixed, every remaining contour is contained in exactly one region of $\mathcal R_{\Gamma}$.
Moreover, conditioning on the good event $A_{N,n}$ preserves the product structure across these regions.
Consequently, by the contour representation and the additivity of the contour energy, for every collection of bounded measurable functions $(F_R)_{R\in\mathcal R_{\Gamma}}$,
\begin{align}
&
\widetilde{\E}_{N,\beta}^n
\left[
\prod_{R\in\mathcal R_{\Gamma}}
F_R(\phi|_R)
\,\middle|\,
U=\Gamma
\right]
\nonumber\\
&
=
\prod_{R\in\mathcal R_{\Gamma}}
\nu_R[F_R].
\label{eq:conditional-factorization}
\end{align}
This is the factorization underlying \cite[(4.6)--(4.8)]{lacoin2021solid}.

The conditional factorization above concerns the SOS measure.
The wetting interaction itself remains nonlocal across the regions because of the shifted wall penalty.
We next record the geometry of the coarse-grained regions before resolving this interaction in Subsection~\ref{subsec:4.4}.

\subsection{Geometry of the coarse-grained regions}
\label{subsec:4.3}

We now record the geometric properties of the coarse-grained regions introduced in Subsection~\ref{subsec:4.2}.
Recall that $M=\lfloor u^{-\chi}\rfloor$, with $0<\chi<1/50$, and that we consider $N=kM$.
We use throughout the notation introduced there: $H_M$ denotes the dual grid of mesh size $M$, $C_z$ the deterministic square cells, and $\Gamma\in\mathcal U_{N,n,M}$ an admissible realization of the coarse-graining datum.
The corresponding regions are
\begin{align*}
    \mathcal R_{\Gamma}
    &=
    \{B_z\}_z
    \cup
    \left\{
        \overline\gamma:
        \gamma\in\Gamma
    \right\}.
\end{align*}

For a finite set $R\subset\Lambda_N$, let
\begin{align*}
    E(R)
    &:=
    \left\{
        \{x,y\}\subset R:
        x\sim y
    \right\}.
\end{align*}
We define the set of edges whose endpoints belong to distinct regions of $\mathcal R_{\Gamma}$ by
\begin{align}
    E_{\mathrm{cr}}(\Gamma)
    &:=
    E(\Lambda_N)\;
    \setminus
    \bigcup_{R\in\mathcal R_{\Gamma}}
    E(R).
    \label{eq:cross-edges}
\end{align}
We define the number of occupied cross-edges by
\begin{align}
    X_{\mathrm{cr}}(\phi)
    &:=
    \sum_{\{x,y\}\in E_{\mathrm{cr}}(\Gamma)}
    \mathbf 1_{\{\phi(x)\le0,\ \phi(y)\le0\}}.
    \label{eq:occupied-cross-edges}
\end{align}

The sharp one-site and two-site contributions will be extracted only from a deterministic core of each cell.
We define
\begin{align}
    C_z^\circ
    &:=
    \left\{
        x\in C_z:
        \operatorname{dist}(x,\partial C_z)\ge4n^4
    \right\}.
    \label{eq:deterministic-core}
\end{align}
The width $4n^4$ is chosen so that the core, together with its nearest-neighbor enlargement, remains separated from every contour intersecting the coarse-graining grid.
This will ensure that the local events used later to identify exact singleton and two-site components are measurable with respect to the corresponding cell region $B_z$.

By \eqref{eq:localization-height-scale} and \eqref{eq:coarse-graining-scale},
\begin{align*}
    n
    &=
    \frac{1}{2\beta}
    \log\frac{1}{u}
    +
    O_\beta(1),
    \qquad
    \frac{M}{n^4}
    \longrightarrow\infty
    \quad\text{as }u\rightarrow 0+.
\end{align*}
We henceforth assume that $u$ is sufficiently small that
\begin{align*}
    M
    &\ge16n^4.
\end{align*}

The grid-crossing contours are short and intersect $H_M$, and are therefore confined to an $O(n^4)$-neighborhood of the coarse-graining grid.
The following lemma records the geometric estimates needed in the sequel.
Its proof is deferred to Appendix~\ref{app:coarse-grained-geometry}.

\begin{lemma}[Geometry of the coarse-grained regions]
\label{lem:coarse-grained-geometry}
There exists a constant $C<\infty$ such that, uniformly over $\Gamma\in\mathcal U_{N,n,M}$, the following estimates hold.

For every cell $C_z$,
\begin{align}
\left\{
y\in C_z:
\operatorname{dist}(y,C_z^\circ)\le1
\right\}
&\subset
B_z,
\label{eq:core-nearest-neighbor-enlargement}
\\
|C_z^\circ|
&=
M^2+O(Mn^4),
\\
|E(C_z^\circ)|
&=
2M^2+O(Mn^4),
\label{eq:core-cardinality-estimates}
\\
\left|
E(B_z)\setminus E(C_z^\circ)
\right|
&\le
CMn^4.
\label{eq:cell-boundary-edge-estimate}
\end{align}

Moreover, for every $\gamma\in\Gamma$,
\begin{align}
|\overline\gamma|
&\le
Cn^8,
\label{eq:single-contour-region-size}
\end{align}
and
\begin{align}
\sum_{\gamma\in\Gamma}
|\overline\gamma|
&\le
C\frac{N^2n^4}{M},
\label{eq:total-contour-volume}
\\
|E_{\mathrm{cr}}(\Gamma)|
&\le
C\frac{N^2n^4}{M}.
\label{eq:cross-edge-count}
\end{align}
\end{lemma}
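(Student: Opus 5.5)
The whole lemma rests on one elementary fact. On the good event $A_{N,n}$, every contour $\gamma\in\Gamma$ has $|\widetilde\gamma|\le n^4$. The interior of a contour of length $L$ lies inside a box of side at most $L/2$, so it has diameter at most $n^4/2$ and area at most $L^2/16$. Since $\widetilde\gamma$ meets $H_M$, every site of $\overline\gamma$ lies within $\ell^1$-distance about $n^4$ of the grid $H_M$. All the estimates follow from this localization together with counting.

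\textbf{Step 1: single contours and the cores.} The area bound gives \eqref{eq:single-contour-region-size} directly, since $|\overline\gamma|\le n^8/16$. For \eqref{eq:core-nearest-neighbor-enlargement}, take $y\in C_z$ with $\operatorname{dist}(y,C_z^\circ)\le1$. Then $\operatorname{dist}(y,\partial C_z)\ge 4n^4-1$. Suppose $y\in\overline\gamma$ for some $\gamma\in\Gamma$. Then $y$ would be within roughly $n^4+1$ of a grid line. A grid line is either the boundary line of $C_z$ or at distance $\ge M$ from it, and $M\ge16n^4$, so this is a contradiction. Hence $y\in B_z$.

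For the core sizes, $C_z^\circ$ is a square of side $M-O(n^4)$. This gives $|C_z^\circ|=M^2+O(Mn^4)$ and $|E(C_z^\circ)|=2M^2+O(Mn^4)$. Since $B_z\subset C_z$, the set $E(B_z)\setminus E(C_z^\circ)$ consists of edges of $C_z$ with at least one endpoint in the strip $C_z\setminus C_z^\circ$. That strip has $O(Mn^4)$ sites, each carrying at most four edges, which gives \eqref{eq:cell-boundary-edge-estimate}.

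\textbf{Step 2: total contour volume.} The interiors in $\Gamma$ are pairwise disjoint, being external. Each is contained in the $(n^4+1)$-neighborhood of $H_M$ inside $\Lambda_N$. That neighborhood has at most $C\,(N/M)\cdot N\cdot n^4$ sites: there are $2k=2N/M$ grid lines, each of length $N$, thickened by $O(n^4)$. Disjointness then gives \eqref{eq:total-contour-volume}.

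\textbf{Step 3: cross-edges.} This is the only step needing care. An edge $\{x,y\}\in E_{\mathrm{cr}}(\Gamma)$ has endpoints in distinct regions, so there are three cases:
\begin{itemize}
\item[(a)] one endpoint lies in some $\overline\gamma$, and the other lies outside it;
\item[(b)] the endpoints lie in two different contour interiors;
\item[(c)] the endpoints lie in $B_z$ and $B_{z'}$ with $z\neq z'$.
\end{itemize}
In cases (a) and (b) the edge touches $\bigcup_\gamma\overline\gamma$. Each site has degree at most four, so there are at most $4\sum_\gamma|\overline\gamma|$ such edges. Case (c) forces $x\in C_z$ and $y\in C_{z'}$ with $z\ne z'$, so the edge crosses the dual grid. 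There are at most $2kN=2N^2/M$ such edges. Summing the cases yields \eqref{eq:cross-edge-count}.

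The main obstacle is making the localization claim rigorous: every site of $\overline\gamma$ lies within distance $O(n^4)$ of $H_M$. This needs the fact that a contour's interior is contained in the bounding box of $\widetilde\gamma$ (in the $\pi/4$-linked convention the interior is still enclosed by the dual curve), together with the fact that a dual edge of $\widetilde\gamma$ touches $H_M$. Once this is established, everything else is counting.
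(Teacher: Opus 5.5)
Your proposal is correct and follows essentially the same route as the paper's proof in Appendix~A. Both arguments localize each short grid-crossing contour (length at most $n^4$) within an $O(n^4)$-neighborhood of $H_M$, then count: the boundary strip for the core and edge estimates, a quadratic area bound for single interiors, disjointness of external interiors for the total volume, and a split of cross-edges into grid-crossing edges plus edges touching contour interiors, with at most four edges per site.
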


\begin{proof}[Proof of Lemma~\ref{lem:coarse-grained-geometry}]
The proof is given in Appendix~\ref{app:coarse-grained-geometry}.
\end{proof}

We conclude by recording the scale consequences of the preceding lemma.
By \eqref{eq:localization-height-scale} and \eqref{eq:coarse-graining-scale},
\begin{align}
    \frac{n^4}{M}
    &=
    O\left(
        |\log u|^4u^\chi
    \right)
    =
    o(1).
    \label{eq:coarse-graining-boundary-ratio}
\end{align}
Furthermore, since $J^n\asymp u$,
\begin{align}
    &\left(
        \sum_{\gamma\in\Gamma}
        |\overline\gamma|
        +
        |E_{\mathrm{cr}}(\Gamma)|
        +
        \sum_z
        \left|
            E(B_z)\setminus E(C_z^\circ)
        \right|
    \right)
    J^{3n}
    \nonumber\\
    &\qquad\le
    C\frac{N^2n^4}{M}J^{3n}
    \nonumber\\
    &\qquad=
    O\left(
        N^2|\log u|^4u^{3+\chi}
    \right)
    =
    o(N^2u^3).
    \label{eq:exceptional-regions-negligible}
\end{align}
Thus the leading-order contribution can be extracted from the deterministic cores $C_z^\circ$, while cell-boundary edges, contour regions, and cross-edges may be treated by rough estimates without affecting the sharp $N^2u^3$ scale.
In the next subsection, we use this geometric decomposition to rewrite the shifted wall penalty in terms of local singleton, pair, and occupied-edge observables.

\subsection{Decomposition of the wall interaction}
\label{subsec:4.4}

We now rewrite the wall interaction in a form compatible with the coarse-grained decomposition.
The leading contribution inside each cell will be extracted from exact singleton and exact two-site components contained in the deterministic core $C_z^\circ$.
All other non-singleton components will be controlled through the number of occupied edges.

Recall the contact set $\phi^{-1}(\mathbb Z_-)$.
For a cell $C_z$, define the number of exact singleton components contained in its deterministic core by
\begin{align}
    I_{1,z}(\phi)
    &:=
    \sum_{x\in C_z^\circ}
    \mathbf 1_{
        \left\{
            \{x\}
            \text{ is a connected component of }\phi^{-1}(\mathbb Z_-)
        \right\}
    }.
    \label{eq:cell-singleton-count}
\end{align}
Similarly, define the number of exact two-site components contained in the core by
\begin{align}
    I_{2,z}(\phi)
    &:=
    \sum_{\{x,y\}\in E(C_z^\circ)}
    \mathbf 1_{
        \left\{
            \{x,y\}
            \text{ is a connected component of }\phi^{-1}(\mathbb Z_-)
        \right\}
    }.
    \label{eq:cell-pair-count}
\end{align}
Thus, $I_{2,z}$ counts two-site components rather than sites.

We also introduce the total number of occupied edges in the cell region $B_z$:
\begin{align}
    Q_z(\phi)
    &:=
    \sum_{\{x,y\}\in E(B_z)}
    \mathbf 1_{\{\phi(x)\le0, \phi(y)\le0\}}.
    \label{eq:cell-occupied-edge-count}
\end{align}
The occupied edges belonging to exact pair components counted by $I_{2,z}$ are separated from the remaining occupied edges by setting
\begin{align}
    R_z(\phi)
    &:=
    Q_z(\phi)-I_{2,z}(\phi).
    \label{eq:cell-edge-remainder}
\end{align}
Every exact two-site component counted by $I_{2,z}$ contributes exactly one occupied edge to $Q_z$.
Consequently,
\begin{align*}
    R_z(\phi)
    &\ge0.
\end{align*}

For the cutoff used later in the cell expansion, define
\begin{align}
    D_z^\kappa
    &:=
    \left\{
        \sum_{x\in B_z}
    \mathbf 1_{\{\phi(x)\le0\}}\le\kappa
    \right\}.
    \label{eq:cell-contact-count-and-cutoff}
\end{align}
The cutoff is an auxiliary restriction whose probability cost will be shown to be negligible.
Besides simplifying the deterministic cell expansion, it bounds the contact overlap of two replicas by $\kappa$.
This allows the exponential overlap term in the disorder second-moment estimate to be controlled by the expected overlap.
Here $\kappa$ is a fixed sufficiently large integer; eventually we take $\kappa=100$.

By \eqref{eq:core-nearest-neighbor-enlargement}, all sites needed to determine whether a vertex in $C_z^\circ$ is an exact singleton, or whether an edge in $E(C_z^\circ)$ is an exact two-site component, belong to $B_z$.
Consequently,
\begin{align*}
    I_{1,z},\quad
    I_{2,z},\quad
    Q_z,\quad
    R_z,\quad
    D_z^\kappa
\end{align*}
are measurable with respect to the restriction $\phi|_{B_z}$.

Recall that $\ell_2(\beta)$ denotes the shifted wall penalty of a nearest-neighbor pair; see \eqref{eq:Hbar-doubleton}.
We set
\begin{align}
    C_\star(\beta)
    &:=
    \max\left\{
        2c_2(\beta),
        \ell_2(\beta)
    \right\},
    \label{eq:coarse-graining-penalty-constant}
\end{align}
where $c_2(\beta)$ is the constant appearing in \eqref{eq:Hbar-linear-bound}.
The choice of $C_\star(\beta)$ ensures that a rough penalty per occupied edge dominates the wall penalty of all non-singleton components that will not be treated sharply.
For a cell $B_z$, define the sharp local potential
\begin{align}
V_z(\phi)
&:=
uI_{1,z}(\phi)
-
\ell_2(\beta)I_{2,z}(\phi)
-
C_\star(\beta)R_z(\phi).
\label{eq:sharp-cell-potential}
\end{align}

\begin{lemma}[Coarse-grained decomposition of the wall interaction]
\label{lem:sharp-local-coarse-graining}
Fix $u\ge0$ and $\Gamma\in\mathcal U_{N,n,M}$.
On the event $A_{N,n}\cap\{U=\Gamma\}$,
\begin{align}
S_N^\omega(\phi)
\ge{}&
\sum_z
\left[
\sum_{x\in B_z}
\eta_x^{(\alpha)}
\mathbf 1_{{\phi(x)\le0}}
+
V_z(\phi)
\right]
\nonumber\\
&+
\sum_{\gamma\in\Gamma}
\left[
\sum_{x\in\overline\gamma}
\bigl(
u+\eta_x^{(\alpha)}
\bigr)
\mathbf 1_{{\phi(x)\le0}}
-
C_\star(\beta)
\sum_{\{x,y\}\in E(\overline\gamma)}
\mathbf 1_{{\phi(x)\le0,\phi(y)\le0}}
\right]
\nonumber\\
&-
C_\star(\beta)X_{\mathrm{cr}}(\phi).
\label{eq:sharp-local-coarse-graining}
\end{align}
\end{lemma}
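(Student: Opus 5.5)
Write $A:=\phi^{-1}(\mathbb Z_-)$ and split $S_N^\omega(\phi)$ into a sitewise part and the penalty $-\overline H_\beta(A)$. The sitewise part $\sum_{x\in A}(u+\eta_x^{(\alpha)})$ factorizes over the partition \eqref{eq:region-decomposition} exactly. For contour regions we keep it as is. For cell regions we keep $\sum_{x\in B_z}\eta_x^{(\alpha)}\mathbf 1_{\{\phi(x)\le0\}}$, and of the reward $u\,|A\cap B_z|$ we retain only $u I_{1,z}(\phi)$, discarding the rest; this is legitimate because $u\ge0$ and $I_{1,z}\le|A\cap B_z|$. The whole task is therefore to bound $\overline H_\beta(A)$ from above by
\begin{align*}
\sum_z\bigl[\ell_2(\beta)I_{2,z}+C_\star R_z\bigr]
+C_\star\sum_{\gamma\in\Gamma}\sum_{\{x,y\}\in E(\overline\gamma)}\mathbf 1_{\{\phi(x)\le0,\phi(y)\le0\}}
+C_\star X_{\mathrm{cr}}.
\end{align*}
Write $Q_{\mathrm{tot}}$ for the total number of occupied edges of $\Lambda_N$. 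Every edge of $E(\Lambda_N)$ lies either in exactly one $E(R)$, $R\in\mathcal R_\Gamma$, or in $E_{\mathrm{cr}}(\Gamma)$. Using $Q_z=I_{2,z}+R_z$, the right-hand side therefore equals
\begin{align*}
C_\star Q_{\mathrm{tot}}-(C_\star-\ell_2)\sum_z I_{2,z}.
\end{align*}

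By additivity \eqref{eq:Hbar-additivity}, $\overline H_\beta(A)=\sum_K\overline H_\beta(K)$ over connected components $K$ of $A$, and we compare component by component. Each occupied edge lies in exactly one component, so $Q_{\mathrm{tot}}=\sum_K|E(K)|$. Singletons contribute $0$ on both sides by \eqref{eq:Hbar-singleton}. A two-site component $\{x,y\}$ has $\overline H_\beta=\ell_2$. If $\{x,y\}\in E(C_z^\circ)$ for some $z$, it is counted once in $I_{2,z}$ and its right-hand contribution is $C_\star-(C_\star-\ell_2)=\ell_2$, an equality. Otherwise its contribution is $C_\star\ge\ell_2$ by \eqref{eq:coarse-graining-penalty-constant}.

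For a component with $|K|\ge3$, connectedness gives $|E(K)|\ge|K|-1\ge|K|/2$. Hence, by \eqref{eq:Hbar-linear-bound},
\begin{align*}
\overline H_\beta(K)\le c_2|K|\le 2c_2|E(K)|\le C_\star|E(K)|.
\end{align*}
The only point to verify is that such a $K$ contains no edge counted in any $I_{2,z}$. This holds by definition, since $I_{2,z}$ counts only edges that are themselves whole components. The same remark shows that a pair counted in $I_{2,z}$ is not double-counted through another cell, because $E(C_z^\circ)\subset E(B_z)$ and the cells are disjoint.

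Summing over components gives the required upper bound on $\overline H_\beta(A)$, and \eqref{eq:sharp-local-coarse-graining} follows. The argument is purely deterministic and pointwise; the events $A_{N,n}$ and $\{U=\Gamma\}$ enter only to fix the partition $\mathcal R_\Gamma$. The main care is bookkeeping: checking that the edge sets $E(R)$ and $E_{\mathrm{cr}}(\Gamma)$ partition $E(\Lambda_N)$, and that the $I_{2,z}$-edges are exactly the occupied edges receiving the sharp weight $\ell_2$ instead of $C_\star$. Measurability of these quantities with respect to $\phi|_{B_z}$ is not needed for this lemma itself.
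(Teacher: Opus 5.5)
Your proof is correct and follows essentially the paper's argument. You use additivity of $\overline H_\beta$ over connected components, $\overline H_\beta=0$ on singletons and $\ell_2(\beta)$ on pairs, and $\overline H_\beta(\mathcal C)\le 2c_2(\beta)|E(\mathcal C)|\le C_\star(\beta)|E(\mathcal C)|$ for every other component, with each occupied edge charged exactly once. The only difference is that you first split off the sitewise reward and then write the penalty side globally as $C_\star Q_{\mathrm{tot}}-(C_\star-\ell_2)\sum_z I_{2,z}$, which is a tidy way of doing the same bookkeeping; the inclusion $C_z^\circ\subset B_z$ that you use implicitly for $I_{1,z}\le|A\cap B_z|$ holds for every admissible $\Gamma$ by \eqref{eq:core-nearest-neighbor-enlargement}.
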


\begin{proof}[Proof of Lemma ~\ref{lem:sharp-local-coarse-graining}]
By the additivity of $\overline H_\beta$ over the connected components of $\phi^{-1}(\mathbb Z_-)$, it is enough to compare the two sides of \eqref{eq:sharp-local-coarse-graining} separately for each connected component $\mathcal C$ of $\phi^{-1}(\mathbb Z_-)$.

If $\mathcal C=\{x\}$, then $\overline H_\beta(\mathcal C)=0$ by \eqref{eq:Hbar-singleton}.
If $x\in C_z^\circ$, the full contribution $u+\eta_x^{(\alpha)}$ is retained through $I_{1,z}$; if $x\in\overline\gamma$ for some $\gamma\in\Gamma$, it is retained by the contour-region term.
If $x\in B_z\setminus C_z^\circ$, only $\eta_x^{(\alpha)}$ is retained.
In every case the discarded part is nonnegative since $u\ge0$.

Suppose next that $\mathcal C=\{x,y\}$ is an exact two-site component contained in some $C_z^\circ$.
Then it contributes one unit to $I_{2,z}$ and one occupied edge to $Q_z$, so its contribution to $R_z=Q_z-I_{2,z}$ vanishes.
The right-hand side retains
\begin{align*}
\eta_x^{(\alpha)}
+
\eta_y^{(\alpha)}
-
\ell_2(\beta),
\end{align*}
whereas, by \eqref{eq:Hbar-doubleton}, its exact contribution to $S_N^\omega(\phi)$ is larger by $2u$.

It remains to consider a connected component $\mathcal C$ with $|\mathcal C|\ge2$ which is not an exact two-site component contained in a deterministic core.
Since $\mathcal C$ is connected,
\begin{align*}
|E(\mathcal C)|
&\ge
|\mathcal C|-1
\ge
\frac{|\mathcal C|}{2}.
\end{align*}
Hence, by \eqref{eq:Hbar-linear-bound} and \eqref{eq:coarse-graining-penalty-constant},
\begin{align}
\overline H_\beta(\mathcal C)
&\le
c_2(\beta)|\mathcal C|
\le
2c_2(\beta)|E(\mathcal C)|
\le
C_\star(\beta)|E(\mathcal C)|.
\label{eq:component-wall-penalty-by-edges}
\end{align}

No occupied edge of such a component is subtracted through $I_{2,z}$.
Each occupied edge of $\mathcal C$ is therefore accounted for exactly once: by $R_z$ if both endpoints belong to the same cell region, by the contour-region edge penalty if both endpoints belong to the same contour region, and by $X_{\mathrm{cr}}$ if its endpoints belong to distinct regions.
Likewise, since $\mathcal R_\Gamma$ is a partition of $\Lambda_N$, each disorder reward $\eta_x^{(\alpha)}$ is retained exactly once.

Finally, the total $u$-reward retained on the right-hand side is at most $u|\mathcal C|$.
Therefore the contribution retained for $\mathcal C$ is at most
\begin{align*}
\sum_{x\in\mathcal C}
\bigl(
u+\eta_x^{(\alpha)}
\bigr)
-
C_\star(\beta)|E(\mathcal C)|
&\le
\sum_{x\in\mathcal C}
\bigl(
u+\eta_x^{(\alpha)}
\bigr)
-
\overline H_\beta(\mathcal C),
\end{align*}
where the last inequality follows from \eqref{eq:component-wall-penalty-by-edges}.
Summing over the connected components completes the proof.
\end{proof}

The preceding pointwise estimate separates the sharp contribution coming from singleton and exact two-site components in the deterministic cell cores from the rough contributions associated with the remaining occupied edges and the contour regions.
In the next subsection, we combine this decomposition with the conditional factorization \eqref{eq:conditional-factorization} to obtain a corresponding lower bound for the finite-volume quenched partition function.

\subsection{Coarse-grained lower-bound decomposition}
\label{subsec:4.5}

We now combine the pointwise decomposition of Lemma~\ref{lem:sharp-local-coarse-graining} with the conditional factorization \eqref{eq:conditional-factorization}.
This yields a finite-volume lower bound in terms of sharp cell contributions, rough contour-region contributions, and the remaining cross-edge interaction.

Fix $\Gamma\in\mathcal U_{N,n,M}$ and recall the conditional region laws $\nu_z$ and $\nu_\gamma$ from Subsection~\ref{subsec:4.2}, as well as the sharp local potential $V_z$ from \eqref{eq:sharp-cell-potential}.

The local terms in Lemma~\ref{lem:sharp-local-coarse-graining} are treated differently in cell regions and contour regions.
In the cell regions, we retain the singleton and exact-pair contributions sharply, together with the cutoff $D_z^\kappa$.
We therefore define
\begin{align}
Z_{B_z}^{\omega,\sharp}(\Gamma)
&:=
\nu_z
\left[
\mathbf 1_{D_z^\kappa}(\phi)
\exp\left\{
V_z(\phi)
+
\sum_{x\in B_z}
\eta_x^{(\alpha)}
\mathbf 1_{{\phi(x)\le0}}
\right\}
\right].
\label{eq:sharp-cell-partition-function}
\end{align}

In the contour regions, no sharp identification of singleton and pair components is needed.
We keep only the contact reward and a rough penalty for occupied edges.
Thus, for $\gamma\in\Gamma$, set
\begin{align}
Z_{\overline\gamma}^{\omega,\mathrm{rough}}(\Gamma)
&:=
\nu_\gamma
\left[
\exp\left\{
\sum_{x\in\overline\gamma}
\bigl(
u+\eta_x^{(\alpha)}
\bigr)
\mathbf 1_{{\phi(x)\le0}}
-
C_\star(\beta)
\sum_{\{x,y\}\in E(\overline\gamma)}
\mathbf 1_{{\phi(x)\le0,\phi(y)\le0}}
\right\}
\right].
\label{eq:rough-contour-region-partition-function}
\end{align}

The corresponding normalized measures describe the local SOS laws tilted by these interaction weights.
For a cell $B_z$, let
\begin{align}
\mu_z^{\omega,\sharp}(d\phi_{B_z})
&:=
\frac{
\mathbf 1_{D_z^\kappa}(\phi)
\exp\left\{
V_z(\phi)
+
\sum_{x\in B_z}
\eta_x^{(\alpha)}
\mathbf 1_{\{\phi(x)\le0\}}
\right\}
}{
Z_{B_z}^{\omega,\sharp}(\Gamma)
}
\nu_z(d\phi_{B_z}).
\label{eq:sharp-cell-random-tilted-measure}
\end{align}
For a contour region $\overline\gamma$, define
\begin{align}
\mu_{\overline\gamma}^{\omega,\mathrm{rough}}
(d\phi_{\overline\gamma})
&:=
\frac{
\exp\left\{
\sum_{x\in\overline\gamma}
\bigl(
u+\eta_x^{(\alpha)}
\bigr)
\mathbf 1_{{\phi(x)\le0}}
-
C_\star(\beta)
\sum_{\{x,y\}\in E(\overline\gamma)}
\mathbf 1_{{\phi(x)\le0,\phi(y)\le0}}
\right\}
}{
Z_{\overline\gamma}^{\omega,\mathrm{rough}}(\Gamma)
}
\nu_\gamma(d\phi_{\overline\gamma}).
\label{eq:rough-contour-random-tilted-measure}
\end{align}

Since the conditional SOS law factorizes over the coarse-grained regions, these tilted local measures can be combined into the product measure
\begin{align}
\mu_{\Gamma}^{\omega,\mathrm{mix}}
&:=
\left(
\bigotimes_z
\mu_z^{\omega,\sharp}
\right)
\otimes
\left(
\bigotimes_{\gamma\in\Gamma}
\mu_{\overline\gamma}^{\omega,\mathrm{rough}}
\right).
\label{eq:mixed-product-tilted-measure}
\end{align}
The only interaction not absorbed into these local weights is the cross-edge term $X_{\mathrm{cr}}$, which will be treated separately under $\mu_{\Gamma}^{\omega,\mathrm{mix}}$.

The following proposition is the main output of the coarse-graining reduction.

\begin{proposition}[Coarse-grained lower-bound decomposition]
\label{prop:coarse-grained-lower-bound-decomposition}
For every sufficiently small $u>0$, every localization height $n$ satisfying \eqref{eq:admissible-localization-window}, and every $N$ divisible by $M$, we have
\begin{align}
    \E\log Z_{N,n}^\omega(u)
    \ge{}&
    -
    CN^2e^{-cn^4}+\\
    &\min_{
        \Gamma
        \in
        \mathcal U_{N,n,M}
    }
    \Bigg\{
        \sum_z
        \E\log
        Z_{B_z}^{\omega,\sharp}
        \bigl(
            \Gamma
        \bigr)
        +
        \sum_{\gamma\in\Gamma}
        \E\log
        Z_{\overline\gamma}^{\omega,\mathrm{rough}}
        \bigl(
            \Gamma
        \bigr)
        -
        C_\star(\beta)
        \E
        \left[
            \mu_{\Gamma}^{\omega,\mathrm{mix}}
            \left[
                X_{\mathrm{cr}}
            \right]
        \right]
    \Bigg\}.
    \label{eq:sharp-coarse-grained-decomposition}
\end{align}
\end{proposition}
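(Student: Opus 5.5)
The plan is to chain the good-event restriction, a disintegration over the coarse-graining datum $U$, the pointwise bound of Lemma~\ref{lem:sharp-local-coarse-graining}, the conditional factorization \eqref{eq:conditional-factorization}, and one Jensen step for the only non-factorizing term $X_{\mathrm{cr}}$.

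First apply \eqref{eq:restriction-to-good-event} with $G^\omega=e^{S_N^\omega}$. This gives
\begin{align*}
\E\log Z_{N,n}^\omega(u)\ge -CN^2e^{-cn^4}+\E\log\widetilde\E_{N,\beta}^n\bigl[e^{S_N^\omega}\bigr].
\end{align*}
Next disintegrate over $U$:
\begin{align*}
\widetilde\E_{N,\beta}^n\bigl[e^{S_N^\omega}\bigr]=\sum_{\Gamma\in\mathcal U_{N,n,M}}\widetilde\P_{N,\beta}^n(U=\Gamma)\,\widetilde\E_{N,\beta}^n\bigl[e^{S_N^\omega}\,\big|\,U=\Gamma\bigr].
\end{align*}
The weights $p_\Gamma:=\widetilde\P(U=\Gamma)$ do not depend on $\omega$ and sum to one. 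Bounding the sum below by its smallest summand gives $\log\widetilde\E[e^{S}]\ge\min_\Gamma\log\widetilde\E[e^S\mid U=\Gamma]$. This holds for each $\omega$, but the minimizing $\Gamma$ could depend on $\omega$. To get the minimum outside $\E$, as in \eqref{eq:sharp-coarse-grained-decomposition}, I would instead use concavity: Jensen for the probability vector $(p_\Gamma)$ gives $\log\sum_\Gamma p_\Gamma Y_\Gamma\ge\sum_\Gamma p_\Gamma\log Y_\Gamma$. Taking $\E$ then yields $\sum_\Gamma p_\Gamma\,\E\log Y_\Gamma\ge\min_\Gamma\E\log Y_\Gamma$. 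Alternatively, since $\mathcal U_{N,n,M}$ is finite for fixed $N$, one can simply note $\E\min\le\min\E$ fails in the wrong direction, so the Jensen route is the one to use.

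Fix $\Gamma$. On $A_{N,n}\cap\{U=\Gamma\}$, Lemma~\ref{lem:sharp-local-coarse-graining} bounds $S_N^\omega$ below by
\[
\sum_z W_z(\phi|_{B_z})+\sum_\gamma W_\gamma(\phi|_{\overline\gamma})-C_\star X_{\mathrm{cr}},
\]
where $W_z$ and $W_\gamma$ are the exponents in \eqref{eq:sharp-cell-partition-function} and \eqref{eq:rough-contour-region-partition-function}. Further lower the integrand by inserting $\prod_z\mathbf 1_{D_z^\kappa}\le1$. Each region term is measurable with respect to $\phi|_R$ by the measurability remark after \eqref{eq:cell-contact-count-and-cutoff}. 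So \eqref{eq:conditional-factorization}, extended from bounded to nonnegative functions by monotone convergence, gives
\begin{align*}
\widetilde\E\bigl[e^{S_N^\omega}\,\big|\,U=\Gamma\bigr]\ge\Bigl(\prod_z Z^{\omega,\sharp}_{B_z}(\Gamma)\prod_\gamma Z^{\omega,\mathrm{rough}}_{\overline\gamma}(\Gamma)\Bigr)\,\mu_\Gamma^{\omega,\mathrm{mix}}\bigl[e^{-C_\star X_{\mathrm{cr}}}\bigr].
\end{align*}
Here the product measure is identified through the definitions \eqref{eq:sharp-cell-random-tilted-measure}--\eqref{eq:mixed-product-tilted-measure}. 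The normalizations are positive: for $Z^\sharp$, the configurations with no contacts in $B_z$ satisfy $D_z^\kappa$ and have positive $\nu_z$-mass at large $n$.

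$X_{\mathrm{cr}}$ is not a product over regions, but it is a function of $\phi$ on the union of all regions, so it is still integrable against $\mu^{\mathrm{mix}}$. Jensen then gives $\log\mu^{\mathrm{mix}}[e^{-C_\star X_{\mathrm{cr}}}]\ge -C_\star\,\mu^{\mathrm{mix}}[X_{\mathrm{cr}}]$. Taking logarithms and $\E$, and combining with the first step, gives \eqref{eq:sharp-coarse-grained-decomposition}.

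The main subtlety is justifying \eqref{eq:conditional-factorization} for the restricted, conditioned law, and checking that $\mu^{\mathrm{mix}}$ is exactly the reweighted conditional law. Both are given earlier. The remaining integrability issue for $\E\log$ is harmless because all quantities are finite sums over a finite volume with exponential moments of $\omega$.
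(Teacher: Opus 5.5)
Your proposal is correct and follows essentially the same route as the paper: you restrict to $A_{N,n}$ via \eqref{eq:restriction-to-good-event}, use concavity of the logarithm over the disorder-independent weights $\widetilde\P_{N,\beta}^n(U=\Gamma)$ to place the minimum outside $\E$, insert the cutoffs $\mathbf 1_{D_z^\kappa}$ into the pointwise bound of Lemma~\ref{lem:sharp-local-coarse-graining}, apply the conditional factorization \eqref{eq:conditional-factorization}, and finish with Jensen's inequality for the cross-edge factor under $\mu_\Gamma^{\omega,\mathrm{mix}}$. Your aside about $\E\min\le\min\E$ is worded confusingly but harmless, since you correctly settle on the averaging (Jensen) argument, which is what the paper's appeal to concavity amounts to.
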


\begin{proof}[Proof of Proposition ~\ref{prop:coarse-grained-lower-bound-decomposition}]
Applying \eqref{eq:restriction-to-good-event} with $G^\omega(\phi)=\exp{S_N^\omega(\phi)}$ gives
\begin{align}
\E\log Z_{N,n}^\omega(u)
\ge
-CN^2e^{-cn^4}
+
\E\log
\widetilde{\E}_{N,\beta}^n
\left[
e^{S_N^\omega(\phi)}
\right].
\label{eq:good-event-restricted-partition-bound}
\end{align}

We next decompose according to the coarse-graining datum $U$:
\begin{align*}
\widetilde{\E}_{N,\beta}^n
\left[
e^{S_N^\omega(\phi)}
\right]
&=
\sum_{\Gamma\in\mathcal U_{N,n,M}}
\widetilde{\P}_{N,\beta}^n(U=\Gamma)
\widetilde{\E}_{N,\beta}^n
\left[
e^{S_N^\omega(\phi)}
\,\middle|\,
U=\Gamma
\right].
\end{align*}
Since the probabilities $\widetilde{\P}_{N,\beta}^n(U=\Gamma)$ are independent of the disorder, concavity of the logarithm yields
\begin{align}
\E\log
\widetilde{\E}_{N,\beta}^n
\left[
e^{S_N^\omega(\phi)}
\right]
&\ge
\min_{\Gamma\in\mathcal U_{N,n,M}}
\E\log
\widetilde{\E}_{N,\beta}^n
\left[
e^{S_N^\omega(\phi)}
\,\middle|\,
U=\Gamma
\right].
\label{eq:coarse-graining-minimum}
\end{align}

Fix $\Gamma\in\mathcal U_{N,n,M}$.
By Lemma~\ref{lem:sharp-local-coarse-graining}, we may further restrict each cell to $D_z^\kappa$.
Using the conditional factorization \eqref{eq:conditional-factorization} and the definitions of the local partition functions and tilted measures, we obtain
\begin{align}
&
\widetilde{\E}_{N,\beta}^n
\left[
e^{S_N^\omega(\phi)}
\,\middle|\,
U=\Gamma
\right]
\nonumber\\
&\qquad\ge
\left(
\prod_z
Z_{B_z}^{\omega,\sharp}(\Gamma)
\right)
\left(
\prod_{\gamma\in\Gamma}
Z_{\overline\gamma}^{\omega,\mathrm{rough}}(\Gamma)
\right)
\mu_{\Gamma}^{\omega,\mathrm{mix}}
\left[
e^{-C_\star(\beta)X_{\mathrm{cr}}}
\right].
\label{eq:sharp-weight-factorization}
\end{align}

Taking logarithms and applying Jensen's inequality to the last factor,
\begin{align*}
\log
\mu_{\Gamma}^{\omega,\mathrm{mix}}
\left[
e^{-C_\star(\beta)X_{\mathrm{cr}}}
\right]
&\ge
-C_\star(\beta)
\mu_{\Gamma}^{\omega,\mathrm{mix}}
\left[
X_{\mathrm{cr}}
\right].
\end{align*}
Therefore,
\begin{align*}
&
\E\log
\widetilde{\E}_{N,\beta}^n
\left[
e^{S_N^\omega(\phi)}
\,\middle|\,
U=\Gamma
\right]
\\
&\qquad\ge
\sum_z
\E\log
Z_{B_z}^{\omega,\sharp}(\Gamma)
+
\sum_{\gamma\in\Gamma}
\E\log
Z_{\overline\gamma}^{\omega,\mathrm{rough}}(\Gamma)
-
C_\star(\beta)
\E
\left[
\mu_{\Gamma}^{\omega,\mathrm{mix}}
\left[
X_{\mathrm{cr}}
\right]
\right].
\end{align*}
Combining this estimate with \eqref{eq:coarse-graining-minimum} and \eqref{eq:good-event-restricted-partition-bound} proves \eqref{eq:sharp-coarse-grained-decomposition}.
\end{proof}

Proposition~\ref{prop:coarse-grained-lower-bound-decomposition} reduces the global lower bound to three separate estimates: a sharp estimate for the cell partition functions, a rough lower bound for the contour-region partition functions, and an upper bound on the cross-edge error.
The first of these requires recovering the sharp one-site and two-site coefficients $a_1(\beta)$ and $a_2(\beta)$ under the conditioned cell law, whereas the latter two can be handled using the uniform contour-restricted peak estimates.
These estimates are carried out in Section~\ref{chap:5}.

\section{Sharp estimates and completion of the lower bound}\label{chap:5}

In this section, we estimate the local terms obtained from the coarse-grained reduction of Section~4 and complete the proof of the sharp lower bound.
The main task is to identify the leading contributions of isolated contacts and nearest-neighbor pairs inside the bulk of each coarse-grained cell.
To recover the sharp coefficients $a_1(\beta)$ and $a_2(\beta)$, we first compare the conditioned SOS law in the interior of a cell with the infinite-volume measure $\mathbb P_\beta$.
This allows us to derive sharp estimates for the numbers of exact singleton and two-site components.
We then use these estimates to obtain a sharp expansion of the cell partition function and show that the centered disorder contributes only at a smaller order.
Finally, we control the contour-region and cross-edge terms by rough estimates and combine all contributions to obtain the desired lower bound.

\subsection{Peak estimates in a conditioned cell}
\label{subsec:5.1}

The sharp cell contribution in Proposition~\ref{prop:coarse-grained-lower-bound-decomposition} requires precise estimates for the probabilities of singleton and nearest-neighbor pair components under the conditioned cell law $\nu_z$.

The rough estimates throughout the cell region $B_z$ follow from the contour-restricted peak estimates collected in Subsection~\ref{subsec:3.3}.
To recover the sharp coefficients $a_1(\beta)$ and $a_2(\beta)$ in the deterministic core $C_z^\circ$, we additionally compare fixed-radius local events under the conditioned cell law with their infinite-volume counterparts.
The required comparison is proved in Appendix~\ref{app:local-comparison}.

More precisely, if $\overline\nu_z$ denotes the law under $\nu_z$ of the reflected field
\begin{align*}
    \psi
    &:=
    n-\phi,
\end{align*}
then Lemma~\ref{lem:local-comparison-conditioned-cell} shows that, for every event $A_x=A_x^+\cap A_x^-$ supported in a fixed neighborhood of $x\in C_z^\circ$, where $A_x^+$ is increasing and $A_x^-$ is decreasing,
\begin{align}
    \left|
        \overline\nu_z(A_x)
        -
        \P_\beta(A_0)
    \right|
    &\le
    Ce^{-cn^2}.
\end{align}
Here $A_0$ denotes the translate of $A_x$ to a neighborhood of the origin.
Combining this comparison with the contour-restricted and infinite-volume peak estimates yields the following result.

For $x\in C_z^\circ$, let
\begin{align*}
S_x
&:=
\left\{
\{x\}
\text{ is a connected component of }
\phi^{-1}(\mathbb Z_-)
\right\},
\end{align*}
and, for $e=\{x,y\}\in E(C_z^\circ)$, let
\begin{align*}
P_e
&:=
\left\{
\{x,y\}
\text{ is a connected component of }
\phi^{-1}(\mathbb Z_-)
\right\}.
\end{align*}

\begin{proposition}[Peak estimates in a conditioned cell]
\label{prop:conditioned-cell-peak-estimates}
Assume that $\beta$ is sufficiently large.
Let $u>0$ be sufficiently small, let $n=n(u)$ satisfy \eqref{eq:admissible-localization-window}, and let $M$ be given by \eqref{eq:coarse-graining-scale}.
Uniformly in $z$, in positive integers $N$ divisible by $M$, and in $\Gamma\in\mathcal U_{N,n,M}$, the following estimates hold.

For every $x\in B_z$ and every $\{x,y\}\in E(B_z)$,
\begin{align}
\nu_z(\phi(x)\le0)
&\le
C_\beta J^{2n},
\qquad
\nu_z(\phi(x)\le0,\phi(y)\le0)
\le
C_\beta J^{3n}.
\label{eq:conditioned-cell-rough-one-two-point}
\end{align}
For every fixed $k\ge1$ and every collection of distinct sites $x_1,\ldots,x_k\in B_z$,
\begin{align}
\nu_z
\left(
\phi(x_i)\le0
\text{ for all }1\le i\le k
\right)
&\le
C_{\beta,k}e^{-3\beta n\sqrt{k}}.
\label{eq:conditioned-cell-multipoint}
\end{align}

Moreover, for $x\in C_z^\circ$ and $e=\{x,y\}\in E(C_z^\circ)$,
\begin{align}
\nu_z(S_x)
&\ge
a_1(\beta)J^{2n}
-
C_\beta
\left(
J^{3n}+e^{-c_\beta n^2}
\right),
\label{eq:conditioned-cell-exact-singleton}
\\
\nu_z(P_e)
&\le
a_2(\beta)J^{3n}
+
C_\beta
\left(
nJ^{4n}+e^{-c_\beta n^2}
\right),
\label{eq:conditioned-cell-exact-pair}
\\
\nu_z
\left(
\{\phi(x)\le0,\phi(y)\le0\}
\setminus P_e
\right)
&\le
C_\beta
\left(
nJ^{4n}+e^{-c_\beta n^2}
\right).
\label{eq:conditioned-cell-pair-in-larger-cluster}
\end{align}
\end{proposition}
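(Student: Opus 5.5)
The plan is to transport everything to the reflected field $\psi=n-\phi$ and treat the rough and sharp estimates separately. Under this reflection $\{\phi(x)\le0\}=\{\psi(x)\ge n\}$, $S_x$ becomes $S_{x,n}^{\mathrm{hi}}$ (for $\psi$), and $P_e$ becomes $P_{e,n}^{\mathrm{hi}}$.

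\textbf{Rough bounds \eqref{eq:conditioned-cell-rough-one-two-point} and \eqref{eq:conditioned-cell-multipoint}.} First, identify $\overline\nu_z$ as a contour-restricted SOS measure on $B_z$ with zero boundary condition. The SOS measure conditioned on $A_{N,n}\cap\{U=\Gamma\}$ factorizes by \eqref{eq:conditional-factorization}. The law on $B_z$ is the SOS law on $C_z$ (with boundary height $n$ inherited from the grid) restricted to contours that:
\begin{itemize}
\item have length at most $n^4$;
\item are compatible with $\Gamma$;
\item do not cross $H_M$ externally.
\end{itemize}
Reflecting and shifting maps this to $\P_{\mathcal L,C_z,\beta}$ for a suitable family $\mathcal L$. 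Note that $B_z$ may not be simply connected. However, the removed interiors $\overline\gamma$ can be encoded as forbidden contours within the simply connected domain $C_z$, which is the setting of Proposition~\ref{prop:restricted-peak-estimates}. With this identification, \eqref{eq:restricted-one-point}, \eqref{eq:restricted-two-point} and \eqref{eq:restricted-multipoint} give the rough bounds directly, uniformly in $z$, $N$ and $\Gamma$.

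\textbf{Sharp bounds.} For $x\in C_z^\circ$, the event $S_x$ in terms of $\psi$ is
\[
\{\psi(x)\ge n\}\cap\bigcap_{w\sim x}\{\psi(w)<n\},
\]
which is the intersection of an increasing and a decreasing event supported on the neighborhood $\{x\}\cup\{w\sim x\}$. By \eqref{eq:core-nearest-neighbor-enlargement}, this neighborhood lies in $B_z$. Similarly, for $e=\{x,y\}$, the event $P_e$ is
\[
\{\psi(x)\ge n,\ \psi(y)\ge n\}\cap\{\psi(w)<n \text{ for all } w\sim\{x,y\},\ w\notin e\},
\]
which is again an increasing-times-decreasing event. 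The event $\{\psi(x)\ge n,\psi(y)\ge n\}\setminus P_e$ is a finite union of events of the form
\[
\{\psi(x)\ge n,\ \psi(y)\ge n,\ \psi(w)\ge n\},\qquad w \text{ a neighbor of } e.
\]
It is more convenient to bound it via the difference
\[
\overline\nu_z(\psi(x)\ge n,\psi(y)\ge n)-\overline\nu_z(P_e),
\]
where both terms are handled by the comparison. Lemma~\ref{lem:local-comparison-conditioned-cell} then gives
\[
|\overline\nu_z(A_x)-\P_\beta(A_0)|\le Ce^{-cn^2}
\]
for each of these fixed-radius events. Combining this with Corollary~\ref{cor:exact-peak-clusters}, namely \eqref{eq:infinite-volume-exact-singleton}, \eqref{eq:infinite-volume-exact-pair} and \eqref{eq:infinite-volume-pair-in-larger-cluster}, together with \eqref{eq:infinite-volume-sharp-two-point}, yields \eqref{eq:conditioned-cell-exact-singleton}, \eqref{eq:conditioned-cell-exact-pair} and \eqref{eq:conditioned-cell-pair-in-larger-cluster} with errors $C_\beta(J^{3n}+e^{-c_\beta n^2})$, respectively $C_\beta(nJ^{4n}+e^{-c_\beta n^2})$. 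Only one-sided inequalities are needed, which leaves some slack.

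\textbf{Main obstacle.} The difficulty lies in the comparison with $\P_\beta$ at depth $4n^4$ inside a cell whose conditioning is both global and nonlocal (good event, grid-crossing contours, compatibility). The proof of Lemma~\ref{lem:local-comparison-conditioned-cell} is deferred to the appendix, so here we only check its hypotheses: the events are supported in $C_z^\circ$ plus a unit neighborhood, and they have the required monotone product structure. If a direct appeal proves inadequate, a fallback is available for the singleton lower bound. We could use \eqref{eq:restricted-sharp-one-point} with
\[
d(x,\mathcal L^c)\gtrsim n^4
\]
for $x\in C_z^\circ$. This holds because forbidden contours either have length greater than $n^4$ or lie within $O(n^4)$ of the grid, so reaching $x$ from them costs distance at least of order $n^4$. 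We would then subtract the non-isolated part using the two-point bound \eqref{eq:restricted-two-point}, which costs only $4C_\beta J^{3n}$.
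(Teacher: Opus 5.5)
Your proposal is correct. It matches the paper on the sharp estimates and takes a genuinely different route for the rough bounds.

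For \eqref{eq:conditioned-cell-exact-singleton}--\eqref{eq:conditioned-cell-pair-in-larger-cluster} you follow the paper. You rewrite $S_x$ and $P_e$ in terms of $\psi=n-\phi$ as intersections of an increasing and a decreasing event supported within distance two of $x$. You then apply Lemma~\ref{lem:local-comparison-conditioned-cell} and conclude with Corollary~\ref{cor:exact-peak-clusters}. For the last bound the paper simply notes that the set difference is itself increasing and compares it once; your difference of two comparisons gives the same result.

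For \eqref{eq:conditioned-cell-rough-one-two-point}--\eqref{eq:conditioned-cell-multipoint}, you identify $\overline\nu_z$ with the $B_z$-marginal of a zero-boundary contour-restricted measure on the square $C_z$ and quote Proposition~\ref{prop:restricted-peak-estimates}. The paper deliberately avoids identifying any SOS measure on $B_z$. It proves a cylinder-erasure bound directly under $\P_{N,\beta}^n(\cdot\mid A_{N,n}\cap\{U=\Gamma\})$: removing a compatible family of negative cylinders with interiors in $B_z$ and contours avoiding $H_M$ is injective, preserves the conditioning event, and lowers the energy by $\sum_j m_j|\widetilde\delta_j|$. Explicit Peierls sums over intensity assignments then give $C_\beta e^{-4\beta n}$, $C_\beta e^{-6\beta n}$ and $C_{\beta,k}e^{-3\beta n\sqrt k}$.

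Your route is shorter and reuses an estimate already available. Embedding $B_z$ into $C_z$, with the removed interiors encoded as forbidden contours, is a clean way around the lack of simple connectedness.

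Its price is that you assert the identification rather than check it. Every constraint that $A_{N,n}\cap\{U=\Gamma\}$ places on cylinders meeting $B_z$ must be a per-contour restriction:
\begin{itemize}
\item length at most $n^4$;
\item geometric contour avoiding $H_M$ altogether, which is what externality of $\Gamma$ forces (your phrase ``not crossing $H_M$ externally'' is too vague);
\item interior disjoint from each $\overline\gamma$;
\item compatibility with each $\gamma\in\Gamma$.
\end{itemize}
There must also be no residual coupling to the unfixed intensities of $\Gamma$ or to the contours inside the $\overline\gamma$. This does hold, via \eqref{eq:conditional-factorization} and externality of $\Gamma$. The paper's erasure argument needs only the simpler fact that the conditioning event is stable under erasure.

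A minor correction to your fallback: contours forbidden only by their length give $d(x,\mathcal L^c)\ge cn^2$, not $\gtrsim n^4$, since a contour of length $n^4$ can have diameter of order $n^2$. This still yields an $e^{-c_\beta n^2}$ error, which matches the statement.
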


\begin{proof}
Fix an admissible realization $\Gamma$, and write
\begin{align*}
\mathcal A_\Gamma
&:= A_{N,n}\cap\{U=\Gamma\},
&
\mu_\Gamma
&:=\P_{N,\beta}^n(\,\cdot\mid\mathcal A_\Gamma).
\end{align*}
The marginal of $\mu_\Gamma$ on $B_z$ is $\nu_z$.
We prove the rough estimates directly under $\mu_\Gamma$, without identifying a new SOS measure on the connected components of $B_z$.

\medskip
\noindent\emph{Step 1: A conditional cylinder-erasure bound.}
Every contour whose interior contains a site of $B_z$ is contained in that cell region and its geometric contour avoids $H_M$.
Indeed, its external ancestor cannot belong to $\Gamma$, since the interiors of the contours in $\Gamma$ have been removed from $B_z$.
That ancestor therefore avoids the grid and is contained in the cell $C_z$.
Externality of the contours in $\Gamma$ also forces its interior to be disjoint from every $\overline\gamma$, $\gamma\in\Gamma$.
The same conclusions hold for its descendants.

Consider a fixed compatible family of negative cylinders $\widehat F=\{(\delta_j,m_j):1\le j\le q\}$ such that $\overline\delta_j\subset B_z$ and $\widetilde\delta_j\cap H_M=\varnothing$ for every $j$.
On the event that these cylinders occur, erase them by setting
\begin{align*}
T_{\widehat F}\phi
&:=\phi+\sum_{j=1}^q m_j\mathbf 1_{\overline\delta_j}.
\end{align*}
By Lemma~\ref{lem:contour-cylinder-representation}, the cylinder collection of $T_{\widehat F}\phi$ is exactly the original collection with $\widehat F$ removed.
Consequently, the map is injective on this event and
\begin{align*}
\mathcal H_{\Lambda_N}^n(\phi)
-\mathcal H_{\Lambda_N}^n(T_{\widehat F}\phi)
&=\sum_{j=1}^q m_j|\widetilde\delta_j|.
\end{align*}
Erasure preserves $\mathcal A_\Gamma$.
It creates no long contour and leaves every contour in $\Gamma$ external.
Any newly exposed external contour is a descendant of an erased contour, and hence still avoids $H_M$.
Comparing Gibbs weights on $\mathcal A_\Gamma$ therefore gives
\begin{align*}
\mu_\Gamma
\left(\widehat F\subset\widehat\Upsilon_n(\phi)\right)
&\le
\exp\left\{-\beta\sum_{j=1}^q m_j|\widetilde\delta_j|\right\}.
\end{align*}
This estimate is uniform in $N$, $z$, and $\Gamma$.

\medskip
\noindent\emph{Step 2: The rough peak estimates.}
For a finite nonempty set $S\subset B_z$, let $\mathcal C_S^-$ denote all negative signed contours in $\mathbb Z^2$ whose interiors intersect $S$, and put $\ell_\delta:=|\widetilde\delta|$.
For finitely supported choices of integers $m_\delta\ge0$, set
\begin{align*}
h_v(\mathbf m)
&:=\sum_{\delta\in\mathcal C_S^-}
 m_\delta\mathbf 1_{\overline\delta}(v),
&
w(\mathbf m)
&:=\exp\left\{-\beta\sum_{\delta\in\mathcal C_S^-}
 m_\delta\ell_\delta\right\}.
\end{align*}
If $\phi(v)\le0$ for every $v\in S$, the total negative-cylinder intensity covering each such site is at least $n$, by \eqref{eq:cylinder-reconstruction}.
All these cylinders are of the type treated in Step~1.
A union bound, followed by dropping compatibility and spatial restrictions, yields
\begin{align*}
\nu_z\bigl(\phi(v)\le0\text{ for all }v\in S\bigr)
&\le T_S(n),
&
T_S(n)
&:=\sum_{\substack{\mathbf m:\ h_v(\mathbf m)\ge n\ \forall v\in S}}
 w(\mathbf m).
\end{align*}
Only one intensity is assigned to each signed contour in this sum.
The elementary contour-counting bound
\begin{align*}
\#\{\delta\in\mathcal C_S^-:\ell_\delta=\ell\}
&\le C|S|\ell^2 3^\ell
\end{align*}
will make the products below finite and uniform in the locations of $S$.
We increase the low-temperature threshold, if necessary, so that $\beta>4\log 3$.

For $S=\{x\}$, let $\delta_x$ be the unique negative contour of length four surrounding $x$.
Sum its intensity exactly, and use $e^{-4\beta(n-a)_+}\le e^{-4\beta n}e^{4\beta a}$ for the total intensity $a$ contributed by the other contours.
This gives
\begin{align*}
T_{\{x\}}(n)
&\le
\frac{e^{-4\beta n}}{1-e^{-4\beta}}
\prod_{\delta\in\mathcal C_{\{x\}}^-\setminus\{\delta_x\}}
\left(1-e^{-\beta(\ell_\delta-4)}\right)^{-1}
\le C_\beta e^{-4\beta n}.
\end{align*}
Here every remaining contour has length at least six, and $\ell_\delta-4\ge\ell_\delta/3$.

For $S=e=\{x,y\}$ with $x\sim y$, let $\delta_e$ be the unique negative contour of length six surrounding both sites and write $r_\delta:=|\overline\delta\cap e|$.
After fixing all other intensities, let $a$ and $b$ be their total contributions at $x$ and $y$.
Summing the intensity of $\delta_e$ requires a value at least $(n-\min\{a,b\})_+$.
Since
\begin{align*}
e^{-6\beta(n-\min\{a,b\})_+}
&\le e^{-6\beta n}e^{3\beta(a+b)},
\end{align*}
we obtain
\begin{align*}
T_e(n)
&\le
\frac{e^{-6\beta n}}{1-e^{-6\beta}}
\prod_{\delta\in\mathcal C_e^-\setminus\{\delta_e\}}
\left(1-e^{-\beta(\ell_\delta-3r_\delta)}\right)^{-1}
\le C_\beta e^{-6\beta n}.
\end{align*}
Indeed, $\ell_\delta-3r_\delta\ge\ell_\delta/4$ in this product: for $r_\delta=1$ the length is at least four, and for $r_\delta=2$ it is at least eight after $\delta_e$ is excluded.
These two bounds prove \eqref{eq:conditioned-cell-rough-one-two-point}, since $J=e^{-2\beta}$.

Finally, let $|S|=k$ and put $r_\delta:=|\overline\delta\cap S|$.
The planar isoperimetric inequality gives $\ell_\delta\ge4\sqrt{r_\delta}$.
Therefore
\begin{align*}
\ell_\delta-\frac{3r_\delta}{\sqrt{k}}
&\ge\frac{\ell_\delta}{4}.
\end{align*}
Since $\sum_{v\in S}h_v(\mathbf m)\ge kn$ on the summation event, an exponential bound with parameter $3\beta/\sqrt{k}$ gives
\begin{align*}
T_S(n)
&\le
 e^{-3\beta n\sqrt{k}}
 \prod_{\delta\in\mathcal C_S^-}
 \left(1-e^{-\beta(\ell_\delta-3r_\delta/\sqrt{k})}\right)^{-1}
\le C_{\beta,k}e^{-3\beta n\sqrt{k}}.
\end{align*}
This proves \eqref{eq:conditioned-cell-multipoint}.
No independence between connected components of $B_z$ is used.

\medskip
\noindent\emph{Step 3: Exact clusters in the deterministic core.}
Set $\psi=n-\phi$ on $B_z$.
For $x\in C_z^\circ$ and $e=\{x,y\}\in E(C_z^\circ)$, write
\begin{align*}
S_x
&=\{\psi(x)\ge n\}
  \cap\{\psi(v)<n\text{ for every }v\sim x\},\\
P_e
&=\{\psi(x)\ge n,\ \psi(y)\ge n\}
  \cap\{\psi(v)<n\text{ for every }v\in\partial\{x,y\}\}.
\end{align*}
By \eqref{eq:core-nearest-neighbor-enlargement}, all sites appearing here belong to $B_z$.
Each event is an intersection of an increasing and a decreasing event, supported within distance two of $x$.
Lemma~\ref{lem:local-comparison-conditioned-cell} therefore compares its probability under the reflected cell law with its probability under $\P_\beta$, with error at most $C_\beta e^{-c_\beta n^2}$.
Combining this with \eqref{eq:infinite-volume-exact-singleton} and \eqref{eq:infinite-volume-exact-pair} from Corollary~\ref{cor:exact-peak-clusters} proves \eqref{eq:conditioned-cell-exact-singleton} and \eqref{eq:conditioned-cell-exact-pair}.

For the last estimate, observe that
\begin{align*}
\{\phi(x)\le0,\phi(y)\le0\}\setminus P_e
&=\{\psi(x)\ge n,\ \psi(y)\ge n\}
\cap
\bigcup_{v\in\partial\{x,y\}}\{\psi(v)\ge n\}.
\end{align*}
This is an increasing event with the same fixed-radius support.
Applying Lemma~\ref{lem:local-comparison-conditioned-cell} and then \eqref{eq:infinite-volume-pair-in-larger-cluster} proves \eqref{eq:conditioned-cell-pair-in-larger-cluster}.
All constants are uniform over the choices specified in the statement.
\end{proof}

\subsection{Sharp cluster estimates in a cell}
\label{subsec:5.2}

The purpose of this subsection is to convert the pointwise estimates of Proposition~\ref{prop:conditioned-cell-peak-estimates} into estimates for the cluster observables appearing in the sharp cell potential.
In particular, we estimate the expected numbers of exact singleton and two-site components and show that all remaining occupied edges give a lower-order contribution.
We also verify that the contact cutoff introduced in \eqref{eq:cell-contact-count-and-cutoff} has negligible probability.

\begin{lemma}[Sharp cluster-count estimates in a cell]
\label{lem:sharp-cluster-counts-cell}
Assume that $\beta$ is sufficiently large.
Recall that the coarse-graining scale is
\begin{align*}
    M
    &=
    \lfloor u^{-\chi}\rfloor,
    \qquad
    0<\chi<\frac1{50},
\end{align*}
as defined in \eqref{eq:coarse-graining-scale}.
Then, for all sufficiently small $u>0$, uniformly over integers $n=n(u)$ satisfying \eqref{eq:admissible-localization-window}, over the cell $z$, over $N$, and over $\Gamma\in\mathcal U_{N,n,M}$, there exist constants $c_\beta,C_\beta>0$ such that
\begin{align}
    \nu_z(I_{1,z})
    &\ge
    a_1(\beta)M^2J^{2n}
    -
    C_\beta
    \left(
        Mn^4J^{2n}
        +
        M^2J^{3n}
        +
        M^2e^{-c_\beta n^2}
    \right),
    \label{eq:sharp-singleton-count-cell}
    \\
    \nu_z(I_{2,z})
    &\le
    2a_2(\beta)M^2J^{3n}
    +
    C_\beta
    \left(
        Mn^4J^{3n}
        +
        M^2nJ^{4n}
        +
        M^2e^{-c_\beta n^2}
    \right),
    \label{eq:sharp-pair-count-cell}
    \\
    \nu_z(R_z)
    &\le
    C_\beta
    \left(
        Mn^4J^{3n}
        +
        M^2nJ^{4n}
        +
        M^2e^{-c_\beta n^2}
    \right).
    \label{eq:sharp-remainder-count-cell}
\end{align}
Moreover, with $\kappa=100$, for all sufficiently small $u$,
\begin{align}
    \nu_z\bigl((D_z^\kappa)^c\bigr)
    =
    \nu_z\left(
        \sum_{x\in B_z}
        \mathbf 1_{\{\phi(x)\le0\}}
        >
        \kappa
    \right)
    &\le
    u^{10}.
    \label{eq:sharp-cell-cutoff}
\end{align}
\end{lemma}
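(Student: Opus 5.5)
The plan is to sum the pointwise estimates of Proposition~\ref{prop:conditioned-cell-peak-estimates} over the core $C_z^\circ$ and the cell edges, using the cardinality bounds of Lemma~\ref{lem:coarse-grained-geometry}. The cutoff \eqref{eq:sharp-cell-cutoff} will come from a multipoint union bound.

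\emph{Singleton and pair counts.} By linearity, $\nu_z(I_{1,z})=\sum_{x\in C_z^\circ}\nu_z(S_x)$. Inserting \eqref{eq:conditioned-cell-exact-singleton} and $|C_z^\circ|=M^2+O(Mn^4)$ gives the main term $a_1(\beta)M^2J^{2n}$. The core-size correction contributes $O(Mn^4J^{2n})$, and the pointwise errors contribute $O(M^2(J^{3n}+e^{-c_\beta n^2}))$; this is \eqref{eq:sharp-singleton-count-cell}. In the same way, $\nu_z(I_{2,z})=\sum_{e\in E(C_z^\circ)}\nu_z(P_e)$. Using \eqref{eq:conditioned-cell-exact-pair} together with $|E(C_z^\circ)|=2M^2+O(Mn^4)$ gives $2a_2(\beta)M^2J^{3n}$ plus the errors stated in \eqref{eq:sharp-pair-count-cell}.

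\emph{Remainder.} I split $R_z=Q_z-I_{2,z}$ into two parts. The first part consists of occupied edges in $E(C_z^\circ)$ that are not exact pair components. Each such edge lies in the event $\{\phi(x)\le0,\phi(y)\le0\}\setminus P_e$, so by \eqref{eq:conditioned-cell-pair-in-larger-cluster} its expectation is at most $C_\beta(nJ^{4n}+e^{-c_\beta n^2})$ per edge, over $O(M^2)$ edges. The second part consists of occupied edges in $E(B_z)\setminus E(C_z^\circ)$. There are at most $CMn^4$ of them by \eqref{eq:cell-boundary-edge-estimate}, and each has probability at most $C_\beta J^{3n}$ by the rough two-point bound \eqref{eq:conditioned-cell-rough-one-two-point}. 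Adding the two parts yields \eqref{eq:sharp-remainder-count-cell}.

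\emph{Cutoff.} This is the only step requiring care. If more than $\kappa$ sites of $B_z$ satisfy $\phi\le0$, then some set of $\kappa+1$ distinct sites all have $\phi\le0$. A union bound together with \eqref{eq:conditioned-cell-multipoint} with $k=\kappa+1=101$ gives
\begin{align*}
\nu_z\bigl((D_z^\kappa)^c\bigr)\le \binom{|B_z|}{101}C_{\beta,101}e^{-3\beta n\sqrt{101}}\le C M^{202}J^{\frac32\sqrt{101}\,n}.
\end{align*}
Since $J^n\asymp u$ and $M\le u^{-\chi}$ with $\chi<1/50$, the right-hand side is at most $C u^{-202\chi+\frac32\sqrt{101}}$. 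We have $\frac32\sqrt{101}>15$ and $202\chi<4.04$, so the exponent exceeds $10$ and the bound is $\le u^{10}$ for small $u$.

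The crude union bound over all $(\kappa+1)$-subsets is where the choices $\kappa=100$ and $\chi<1/50$ enter, and it is the only place where the parameters must be balanced. Uniformity in $z$, $N$ and $\Gamma$ is inherited from Proposition~\ref{prop:conditioned-cell-peak-estimates} and Lemma~\ref{lem:coarse-grained-geometry}.
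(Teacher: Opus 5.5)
Your proposal is correct and follows the paper's proof essentially step by step: you sum the pointwise bounds of Proposition~\ref{prop:conditioned-cell-peak-estimates} over $C_z^\circ$ and $E(C_z^\circ)$ with the cardinality estimates, split $R_z$ into non-exact occupied core edges and the $O(Mn^4)$ boundary-strip edges, and handle the cutoff with a multipoint union bound. The only difference is that your union bound runs over sets of $\kappa+1=101$ sites, whereas the paper uses $\kappa=100$; both choices are valid and give an exponent of $u$ larger than $10$.
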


\begin{proof}[Proof of Lemma~\ref{lem:sharp-cluster-counts-cell}]
All constants below are uniform over the choices specified in the statement.

By definition,
\begin{align*}
    I_{1,z}
    &=
    \sum_{x\in C_z^\circ}\mathbf 1_{S_x},
    \qquad
    I_{2,z}
    =
    \sum_{e\in E(C_z^\circ)}\mathbf 1_{P_e}.
\end{align*}
Hence Proposition~\ref{prop:conditioned-cell-peak-estimates} gives
\begin{align*}
    \nu_z(I_{1,z})
    &\ge
    |C_z^\circ|
    \left[
        a_1(\beta)J^{2n}
        -
        C_\beta
        \left(
            J^{3n}
            +
            e^{-c_\beta n^2}
        \right)
    \right],
    \\
    \nu_z(I_{2,z})
    &\le
    |E(C_z^\circ)|
    \left[
        a_2(\beta)J^{3n}
        +
        C_\beta
        \left(
            nJ^{4n}+e^{-c_\beta n^2}
        \right)
    \right].
\end{align*}
Using \eqref{eq:core-cardinality-estimates},
\begin{align*}
    |C_z^\circ|
    &=
    M^2+O(Mn^4),
    \\
    |E(C_z^\circ)|
    &=
    2M^2+O(Mn^4),
\end{align*}
we obtain \eqref{eq:sharp-singleton-count-cell} and \eqref{eq:sharp-pair-count-cell}.
The factor $2$ in the latter comes from the asymptotic nearest-neighbor edge density of a square cell.

We next estimate $R_z$.
Since $E(C_z^\circ)\subset E(B_z)$ by \eqref{eq:core-nearest-neighbor-enlargement},
\begin{align}
    R_z
    &=
    \sum_{e=\{x,y\}\in E(C_z^\circ)}
    \left(
        \mathbf 1_{\{\phi(x)\le0,\phi(y)\le0\}}
        -
        \mathbf 1_{P_e}
    \right)
    \nonumber\\
    &\qquad
    +
    \sum_{\{x,y\}\in E(B_z)\setminus E(C_z^\circ)}
    \mathbf 1_{\{\phi(x)\le0,\phi(y)\le0\}}.
    \label{eq:remainder-edge-decomposition}
\end{align}
For a core edge, Proposition~\ref{prop:conditioned-cell-peak-estimates} gives
\begin{align*}
    \nu_z(R_z)
    &\le
    C_\beta\left(M^2nJ^{4n}+M^2e^{-c_\beta n^2}+Mn^4J^{3n}\right),
\end{align*}
whereas for every edge of $B_z$,
\begin{align*}
    \nu_z(\phi(x)\le0,\phi(y)\le0)
    &\le
    C_\beta J^{3n}.
\end{align*}
Together with \eqref{eq:core-cardinality-estimates} and \eqref{eq:cell-boundary-edge-estimate}, this yields
\begin{align*}
    \nu_z(R_z)
    &\le
    C_\beta
    \left(
        Mn^4J^{3n}
        +
        M^2nJ^{4n}
        +
        M^2e^{-c_\beta n^2}
    \right),
\end{align*}
which proves \eqref{eq:sharp-remainder-count-cell}.

It remains to prove the cutoff estimate.
For every fixed $\kappa\ge1$, the event $(D_z^\kappa)^c$ implies the existence of $\kappa$ distinct sites in $B_z$ at which $\phi\le0$.
Hence, by \eqref{eq:conditioned-cell-multipoint} and a union bound,
\begin{align}
    \nu_z\bigl((D_z^\kappa)^c\bigr)
    &\le
    C_{\beta,\kappa}
    |B_z|^\kappa
    e^{-3\beta n\sqrt{\kappa}}
    \nonumber\\
    &\le
    C_{\beta,\kappa}
    M^{2\kappa}
    J^{\frac32\sqrt{\kappa}\,n}.
    \label{eq:cutoff-general-kappa}
\end{align}
Taking $\kappa=100$ and using $M\le u^{-\chi}$ and $J^n\le C_\beta u$, we obtain
\begin{align*}
    \nu_z\bigl((D_z^{100})^c\bigr)
    &\le
    C_\beta M^{200}J^{15n}
    \le
    C_\beta u^{15-200\chi}
    \le
    u^{10}
\end{align*}
for all sufficiently small $u$, since $\chi<1/50$ implies $15-200\chi>11$.
This proves \eqref{eq:sharp-cell-cutoff}.
\end{proof}

\subsection{Sharp cell contribution}
\label{subsec:5.3}

We now estimate the sharp cell partition functions introduced in Subsection~4.5.
We first extract the leading deterministic contribution from the singleton and two-site components estimated in Lemma~\ref{lem:sharp-cluster-counts-cell}.
We then show that the centered disorder changes the quenched logarithm of the cell partition function only by an error of order $M^2J^{4n}$.

Recall the sharp cell potential $V_z$ from \eqref{eq:sharp-cell-potential}, and define
\begin{align}
    A_z
    &:=
    \nu_z
    \left[
        \mathbf 1_{D_z^\kappa}e^{V_z}
    \right].
    \label{eq:deterministic-cell-normalization}
\end{align}

We first extract the deterministic contribution to the sharp cell partition function.
Using the cluster-count estimates of Lemma~\ref{lem:sharp-cluster-counts-cell}, we identify the leading contributions of exact singleton and two-site components in the deterministic core, while the remaining occupied edges and the contact cutoff contribute only lower-order errors.
This yields the following sharp expansion of the deterministic cell normalization.

\begin{lemma}[Sharp deterministic cell expansion]
\label{lem:sharp-deterministic-cell-expansion}
Assume that $\beta$ is sufficiently large.
Then, for all sufficiently small $u>0$, uniformly over integers $n=n(u)$ satisfying \eqref{eq:admissible-localization-window}, over the cell $z$, over $N$, and over $\Gamma\in\mathcal U_{N,n,M}$,
\begin{align}
    \log A_z
    &\ge
    M^2
    \left[
        a_1(\beta)(e^u-1)J^{2n}
        -
        2a_2(\beta)\theta_2(\beta)J^{3n}
    \right]
    -
    \mathcal E_z^{\mathrm{det}},
    \label{eq:sharp-deterministic-cell-expansion}
\end{align}
where
\begin{align}
    \mathcal E_z^{\mathrm{det}}
    \le
    C_\beta
    \Bigl[
        &Mn^4
        \bigl(
            uJ^{2n}+J^{3n}
        \bigr)
        \nonumber\\
        &+
        M^2
        \bigl(
            uJ^{3n}
            +
            nJ^{4n}
            +
            e^{-c_\beta n^2}
        \bigr)
        +
        u^{10}
    \Bigr].
    \label{eq:deterministic-cell-remainder}
\end{align}
\end{lemma}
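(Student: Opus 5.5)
The natural first idea, Jensen's inequality $\log A_z\ge\nu_z[V_z]+\log\nu_z(D_z^\kappa)$, is not sharp enough. It would produce the coefficients $u$ and $\ell_2(\beta)$ instead of $e^u-1$ and $\theta_2(\beta)=1-e^{-\ell_2(\beta)}$, and $\ell_2>\theta_2$. So the plan is to expand $e^{V_z}$ directly, using that, on $D_z^\kappa$, all cluster counts are bounded by $\kappa$. Then use $\log(1+x)\ge x-x^2$ for the small quantity $x=A_z-1$.

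\emph{Step 1 (pointwise lower bound for $e^{V_z}$).} Three elementary inequalities are used:
\begin{itemize}
\item by Bernoulli, $e^{uI_{1,z}}=(1+(e^u-1))^{I_{1,z}}\ge 1+(e^u-1)I_{1,z}$;
\item $e^{-\ell_2 I_{2,z}}=(1-\theta_2)^{I_{2,z}}\ge \max\{0,1-\theta_2 I_{2,z}\}$;
\item $e^{-C_\star R_z}\ge\max\{0,1-C_\star R_z\}$.
\end{itemize}
Set $a=(e^u-1)I_{1,z}\ge0$, $b=\theta_2I_{2,z}$ and $c=C_\star R_z$. Then
\begin{align*}
e^{V_z}\ge (1+a)\max\{0,1-b\}\max\{0,1-c\}\ge (1+a)(1-b-c)\ge 1+a-b-c-a(b+c).
\end{align*}
On $D_z^\kappa$ we have $I_{1,z}\le\kappa$, so $a\le 2\kappa u$ for small $u$. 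The cross term is therefore bounded by $C u\,(I_{2,z}+R_z)$. Hence
\begin{align*}
A_z\ge \nu_z(D_z^\kappa)+(e^u-1)\nu_z\bigl[I_{1,z}\mathbf 1_{D_z^\kappa}\bigr]-\theta_2\,\nu_z(I_{2,z})-C\,\nu_z(R_z)-Cu\,\nu_z(I_{2,z}+R_z),
\end{align*}
where the negative terms have been extended from $D_z^\kappa$ to the whole space, which only decreases the bound.

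\emph{Step 2 (insert Lemma~\ref{lem:sharp-cluster-counts-cell}).} By \eqref{eq:sharp-cell-cutoff}, $\nu_z(D_z^\kappa)\ge 1-u^{10}$. Since $I_{1,z}\le M^2$, we get
\begin{align*}
\nu_z\bigl[I_{1,z}\mathbf 1_{D_z^\kappa}\bigr]\ge\nu_z(I_{1,z})-M^2u^{10}.
\end{align*}
After multiplication by $e^u-1\le 2u$, the correction is $O(u^{11-2\chi})\le u^{10}$. Next:
\begin{itemize}
\item the lower bound \eqref{eq:sharp-singleton-count-cell} gives $M^2 a_1(e^u-1)J^{2n}$, with errors $u(Mn^4J^{2n}+M^2J^{3n}+M^2e^{-cn^2})$;
\item the upper bound \eqref{eq:sharp-pair-count-cell}, multiplied by $\theta_2$, gives $-2a_2\theta_2M^2J^{3n}$ plus admissible errors;
\item the bound \eqref{eq:sharp-remainder-count-cell} controls both $\nu_z(R_z)$ and the term $u\,\nu_z(I_{2,z}+R_z)=O(uM^2J^{3n}+\dots)$.
\end{itemize}
Each of these errors is of a form appearing in \eqref{eq:deterministic-cell-remainder}. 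This yields $A_z\ge 1+x-\mathcal E'$, where
\begin{align*}
x=M^2\bigl[a_1(e^u-1)J^{2n}-2a_2\theta_2J^{3n}\bigr]
\end{align*}
and $\mathcal E'$ is bounded by the right side of \eqref{eq:deterministic-cell-remainder}.

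\emph{Step 3 (take logarithms).} Since $J^n\asymp u$ and $M\le u^{-\chi}$, both $|x|$ and $\mathcal E'$ are $O(M^2u^3)=o(1)$. Hence $\log(1+y)\ge y-y^2$ applies with $y=x-\mathcal E'$. The quadratic loss is $O(M^4u^6)$. Because $M^2u^2\le u^{2-2\chi}\le 1$, this is at most $M^2u^4\le C_\beta M^2 nJ^{4n}$, which is absorbed into $\mathcal E_z^{\mathrm{det}}$.

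The main obstacle is Step 1: the sharp coefficients $e^u-1$ and $\theta_2$ must be produced rather than $u$ and $\ell_2$, which rules out Jensen. The difficulty is obtaining a linearization of $e^{V_z}$ whose second-order terms are controlled by $u\,(I_{2,z}+R_z)$. This is exactly where the cutoff $D_z^\kappa$ is used: it bounds $I_{1,z}$ by $\kappa$, so the cross term is only $O(u)$ times quantities of order $M^2J^{3n}$. Uniformity over $z,N,\Gamma,n$ is inherited directly from Lemma~\ref{lem:sharp-cluster-counts-cell}.
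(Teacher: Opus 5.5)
Your proposal is correct and follows the paper's proof essentially step for step: the same factorization $e^{V_z}=e^{uI_{1,z}}(1-\theta_2)^{I_{2,z}}e^{-C_\star R_z}$, the same Bernoulli-type linearization with the cutoff $D_z^\kappa$ used to bound the cross term by $Cu(I_{2,z}+R_z)$, insertion of Lemma~\ref{lem:sharp-cluster-counts-cell}, and a final $\log(1+t)\ge t-Ct^2$ step. The only cosmetic difference is in the quadratic loss: you absorb $O(M^4u^6)$ into $M^2nJ^{4n}$, while the paper absorbs $M^4J^{6n}$ into $M^2uJ^{3n}$, and both are valid.
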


\begin{proof}[Proof of Lemma ~\ref{lem:sharp-deterministic-cell-expansion}]
By the definition of $V_z$ and $\theta_2(\beta)=1-e^{-\ell_2(\beta)}$,
\begin{align}
    e^{V_z}
    &=
    e^{uI_{1,z}}
    (1-\theta_2(\beta))^{I_{2,z}}
    e^{-C_\star(\beta)R_z}.
    \label{eq:factorization-deterministic-cell-weight}
\end{align}
Using the Bernoulli inequality and $1-e^{-t}\le t$ for $t\ge0$,
\begin{align}
    1
    -
    (1-\theta_2(\beta))^{I_{2,z}}
    e^{-C_\star(\beta)R_z}
    &\le
    \theta_2(\beta)I_{2,z}
    +
    C_\star(\beta)R_z.
    \label{eq:auxiliary-cell-factor-loss}
\end{align}
Also,
\begin{align*}
    e^{uI_{1,z}}
    &=
    \bigl(1+(e^u-1)\bigr)^{I_{1,z}}
    \ge
    1+(e^u-1)I_{1,z}.
\end{align*}
Combining these estimates gives
\begin{align}
    e^{V_z}
    \ge{}&
    1
    +(e^u-1)I_{1,z}
    -\theta_2(\beta)I_{2,z}
    -C_\star(\beta)R_z
    \nonumber\\
    &-
    (e^u-1)I_{1,z}
    \left(
        \theta_2(\beta)I_{2,z}
        +
        C_\star(\beta)R_z
    \right).
    \label{eq:pointwise-deterministic-cell-expansion}
\end{align}
On $D_z^\kappa$, $I_{1,z}\le\sum_{x\in B_z}\mathbf 1_{\{\phi(x)\le0\}}\le\kappa$.
Since $e^u-1\le Cu$ for $u\in[0,1]$, it follows that
\begin{align}
    e^{V_z}
    \ge{}&
    1
    +(e^u-1)I_{1,z}
    -\theta_2(\beta)I_{2,z}
    -C_\star(\beta)R_z
    \nonumber\\
    &-
    C_{\beta,\kappa}u
    \left(
        I_{2,z}+R_z
    \right)
    \qquad
    \text{on }D_z^\kappa.
    \label{eq:pointwise-deterministic-cell-expansion-cutoff}
\end{align}
This estimate treats configurations containing several singleton or two-site components simultaneously, so no separate multi-cluster event is required.

Multiplying \eqref{eq:pointwise-deterministic-cell-expansion-cutoff} by $\mathbf 1_{D_z^\kappa}$ and taking $\nu_z$-expectations gives
\begin{align}
    A_z
    \ge{}&
    \nu_z(D_z^\kappa)
    +(e^u-1)\nu_z(I_{1,z};D_z^\kappa)
    -\theta_2(\beta)\nu_z(I_{2,z};D_z^\kappa)
    \nonumber\\
    &-
    C_\star(\beta)\nu_z(R_z;D_z^\kappa)
    -
    C_{\beta,\kappa}u\nu_z(I_{2,z}+R_z;D_z^\kappa).
    \label{eq:deterministic-cell-expectation-before-cutoff-removal}
\end{align}
By \eqref{eq:sharp-cell-cutoff}, $\nu_z(D_z^\kappa)\ge1-u^{10}$.
Since $I_{1,z}\le M^2$,
\begin{align*}
    \nu_z(I_{1,z};D_z^\kappa)
    &\ge
    \nu_z(I_{1,z})-M^2u^{10}.
\end{align*}
For the terms with negative coefficients, we use
\begin{align*}
    \nu_z(I_{2,z};D_z^\kappa)
    &\le
    \nu_z(I_{2,z}),
    \qquad
    \nu_z(R_z;D_z^\kappa)
    \le
    \nu_z(R_z).
\end{align*}
Moreover, $(e^u-1)M^2u^{10}\le Cu^{11-2\chi}\le Cu^{10}$ for all sufficiently small $u$.
Therefore,
\begin{align}
    A_z
    \ge{}&
    1
    +(e^u-1)\nu_z(I_{1,z})
    -\theta_2(\beta)\nu_z(I_{2,z})
    -C_\star(\beta)\nu_z(R_z)
    \nonumber\\
    &-
    C_{\beta,\kappa}u
    \left[
        \nu_z(I_{2,z})+\nu_z(R_z)
    \right]
    -C_\beta u^{10}.
    \label{eq:deterministic-cell-expectation-after-cutoff-removal}
\end{align}
We now apply \eqref{eq:sharp-singleton-count-cell}, \eqref{eq:sharp-pair-count-cell}, and \eqref{eq:sharp-remainder-count-cell}.
Since the coefficient of $I_{1,z}$ is positive, we use its lower bound, while for $I_{2,z}$ and $R_z$ we use their upper bounds.
This yields
\begin{align}
    A_z
    &\ge
    1
    +
    M^2
    \left[
        a_1(\beta)(e^u-1)J^{2n}
        -
        2a_2(\beta)\theta_2(\beta)J^{3n}
    \right]
    -
    \widetilde{\mathcal E}_z^{\mathrm{det}},
    \label{eq:deterministic-cell-before-logarithm}
\end{align}
where
\begin{align}
    \widetilde{\mathcal E}_z^{\mathrm{det}}
    \le
    C_\beta
    \Bigl[
        &(e^u-1)
        \left(
            Mn^4J^{2n}
            +M^2J^{3n}
            +M^2e^{-c_\beta n^2}
        \right)
        \nonumber\\
        &+
        Mn^4J^{3n}
        +M^2nJ^{4n}
        +M^2e^{-c_\beta n^2}
        +uM^2J^{3n}
        +u^{10}
    \Bigr].
    \label{eq:preliminary-deterministic-cell-remainder}
\end{align}
Since $e^u-1\le Cu$,
\begin{align}
    \widetilde{\mathcal E}_z^{\mathrm{det}}
    \le
    C_\beta
    \Bigl[
        &Mn^4
        \left(
            uJ^{2n}+J^{3n}
        \right)
        \nonumber\\
        &+
        M^2
        \left(
            uJ^{3n}+nJ^{4n}+e^{-c_\beta n^2}
        \right)
        +u^{10}
    \Bigr].
    \label{eq:simplified-deterministic-cell-remainder}
\end{align}
By \eqref{eq:localization-height-scale} and \eqref{eq:coarse-graining-scale}, $M^2J^{3n}=O(u^{3-2\chi})=o(1)$, and every term on the right-hand side of \eqref{eq:simplified-deterministic-cell-remainder} is also $o(1)$.
Thus the quantity following the leading $1$ on the right-hand side of \eqref{eq:deterministic-cell-before-logarithm} has absolute value at most $CM^2J^{3n}$ for all sufficiently small $u$.
Using $\log(1+t)\ge t-Ct^2$ for $|t|\le1/2$, we obtain
\begin{align*}
    \log A_z
    \ge{}&
    M^2
    \left[
        a_1(\beta)(e^u-1)J^{2n}
        -2a_2(\beta)\theta_2(\beta)J^{3n}
    \right]
    -\widetilde{\mathcal E}_z^{\mathrm{det}}
    -CM^4J^{6n}.
\end{align*}
Finally, $M^4J^{6n}\le CM^2uJ^{3n}$ because $M^2J^{3n}/u=O(u^{2-2\chi})\rightarrow0$.
The logarithmic square error is therefore absorbed by the $M^2uJ^{3n}$ term in \eqref{eq:deterministic-cell-remainder}, which proves \eqref{eq:sharp-deterministic-cell-expansion}.
\end{proof}

We next quantify the effect of the centered disorder on the sharp cell partition function.
To separate the deterministic contribution from the random one, we tilt the conditioned cell law by the deterministic sharp weight obtained above.
Under this tilted measure, the random cell partition function factors into the deterministic normalization $A_z$ and a disorder-dependent factor, whose quenched logarithmic correction will be shown to be of order $M^2J^{4n}$.

Define the deterministic tilted cell measure by
\begin{align}
    \mu_z^{\mathrm{det}}(d\phi)
    &:=
    \frac{
        \mathbf 1_{D_z^\kappa}(\phi)e^{V_z(\phi)}
    }{
        A_z
    }
    \nu_z(d\phi).
    \label{eq:deterministic-sharp-cell-tilt}
\end{align}

\begin{lemma}[Disorder correction in a sharp cell]
\label{lem:disorder-correction-sharp-cell}
Assume that $\beta$ is sufficiently large.
Then, for all sufficiently small $u>0$, uniformly over integers $n=n(u)$ satisfying \eqref{eq:admissible-localization-window}, over $z$, $N$, and $\Gamma\in\mathcal U_{N,n,M}$,
\begin{align}
    \E
    \left[
        \log
        Z_{B_z}^{\omega,\sharp}(\Gamma)
    \right]
    &\ge
    \log A_z
    -
    C_{\alpha,\beta}M^2J^{4n}.
    \label{eq:sharp-cell-disorder-correction}
\end{align}
\end{lemma}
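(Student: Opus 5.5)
The plan is to factor the random cell partition function through the deterministic tilt $\mu_z^{\mathrm{det}}$ of \eqref{eq:deterministic-sharp-cell-tilt}, and then control the quenched logarithm of the remaining disorder factor by a second-moment argument. Write $A(\phi):=\{x\in B_z:\phi(x)\le0\}$. By the definitions \eqref{eq:sharp-cell-partition-function} and \eqref{eq:deterministic-sharp-cell-tilt},
\begin{align*}
Z_{B_z}^{\omega,\sharp}(\Gamma)=A_z\,W_z^\omega,
\qquad
W_z^\omega:=\mu_z^{\mathrm{det}}\Bigl[\exp\Bigl\{\sum_{x\in A(\phi)}\eta_x^{(\alpha)}\Bigr\}\Bigr].
\end{align*}
It therefore suffices to show $\E\log W_z^\omega\ge -C_{\alpha,\beta}M^2J^{4n}$. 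Since $\mu_z^{\mathrm{det}}$ does not depend on $\omega$, independence and \eqref{eq:normalized-disorder-mean} give $\E W_z^\omega=1$.

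\textbf{Step 1: one-point bound under the tilt.} First I would show that $\mu_z^{\mathrm{det}}(\phi(x)\le0)\le C_\beta J^{2n}$ uniformly in $x\in B_z$. On $D_z^\kappa$ we have $V_z\le uI_{1,z}\le u\kappa$. Moreover, Lemma~\ref{lem:sharp-deterministic-cell-expansion} together with $M^2J^{2n}u+M^2J^{3n}=o(1)$ gives $A_z\ge1/2$ for small $u$. Hence the density of $\mu_z^{\mathrm{det}}$ with respect to $\nu_z$ is bounded by $2e^{\kappa}$, and the claim follows from \eqref{eq:conditioned-cell-rough-one-two-point}.

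\textbf{Step 2: a deterministic lower bound on $W_z^\omega$.} On configurations with $A(\phi)=\varnothing$ the integrand equals $1$, so
\begin{align*}
W_z^\omega\ge\mu_z^{\mathrm{det}}\bigl(A(\phi)=\varnothing\bigr)\ge1-\sum_{x\in B_z}\mu_z^{\mathrm{det}}(\phi(x)\le0)\ge1-C_\beta M^2J^{2n}\ge\tfrac12
\end{align*}
for all small $u$, because $M^2J^{2n}=O(u^{2-2\chi})$. This uniform lower bound is what lets us avoid the failure of Jensen: Jensen alone would only give $\E\log W\ge-\lambda(\alpha)\,\mu_z^{\mathrm{det}}[|A|]\asymp -M^2J^{2n}$, which is too large.

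\textbf{Step 3: second moment.} Using two independent replicas $\phi,\phi'$ under $\mu_z^{\mathrm{det}}$,
\begin{align*}
\E\bigl[(W_z^\omega)^2\bigr]
=\bigl(\mu_z^{\mathrm{det}}\bigr)^{\otimes2}\Bigl[e^{(\lambda(2\alpha)-2\lambda(\alpha))|A(\phi)\cap A(\phi')|}\Bigr].
\end{align*}
On $D_z^\kappa$ the overlap is at most $\kappa$, so $e^{ct}-1\le C_{\alpha,\kappa}t$ for $t\in[0,\kappa]$. Combining this with Step 1,
\begin{align*}
\operatorname{Var}(W_z^\omega)\le C_{\alpha}\sum_{x\in B_z}\mu_z^{\mathrm{det}}(\phi(x)\le0)^2\le C_{\alpha,\beta}M^2J^{4n}.
\end{align*}

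\textbf{Step 4: conclusion.} For $w\ge1/2$ we have $\log w\ge(w-1)-C(w-1)^2$. By Step 2 this applies to $W_z^\omega$, and taking expectations gives
\begin{align*}
\E\log W_z^\omega\ge-C\operatorname{Var}(W_z^\omega)\ge-C_{\alpha,\beta}M^2J^{4n},
\end{align*}
which is \eqref{eq:sharp-cell-disorder-correction}.

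The main obstacle is ensuring that the logarithm cannot become very negative. Step 2 handles this by observing that contact-free configurations carry unit weight and have tilted probability close to one. The cutoff $D_z^\kappa$ is what keeps both the density bound in Step 1 and the overlap exponential in Step 3 bounded uniformly.
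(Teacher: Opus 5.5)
Your proposal is correct and follows the paper's proof essentially step by step: the factorization $Z_{B_z}^{\omega,\sharp}(\Gamma)=A_z\,W_z^\omega$ through $\mu_z^{\mathrm{det}}$, the one-point bound under the tilt, the bound $W_z^\omega\ge 1/2$ from contact-free configurations, the replica second moment with overlap capped by $\kappa$, and the inequality $\log t\ge (t-1)-C(t-1)^2$ for $t\ge 1/2$. The only deviation is that you obtain $A_z\ge 1/2$ from Lemma~\ref{lem:sharp-deterministic-cell-expansion}, whereas the paper gets it directly from $A_z\ge\nu_z(\phi(x)>0\text{ for all }x\in B_z)\ge 1-C_\beta M^2J^{2n}$; both routes are valid, and the direct one is lighter.
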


\begin{proof}[Proof of Lemma ~\ref{lem:disorder-correction-sharp-cell}]
On the event $\{\phi(x)>0\text{ for all }x\in B_z\}$, we have $I_{1,z}=I_{2,z}=R_z=0$, and hence $V_z=0$.
Therefore,
\begin{align}
    A_z
    &\ge
    \nu_z\!\left(\phi(x)>0\ \text{for all }x\in B_z\right).
    \label{eq:deterministic-cell-normalization-lower-bound}
\end{align}
By \eqref{eq:conditioned-cell-rough-one-two-point},
\begin{align*}
    \nu_z\!\left(\exists x\in B_z:\phi(x)\le0\right)
    &\le
    \sum_{x\in B_z}
    \nu_z(\phi(x)\le0)
    \le
    C_\beta M^2J^{2n}
    =
    o(1).
\end{align*}
Consequently,
\begin{align}
    A_z
    &\ge
    \frac12
    \label{eq:deterministic-cell-normalization-positive}
\end{align}
for all sufficiently small $u$.

On $D_z^\kappa$, the negative terms in $V_z$ may be discarded, so
\begin{align*}
    V_z
    &\le
    uI_{1,z}
    \le
    u\kappa.
\end{align*}
Together with \eqref{eq:deterministic-cell-normalization-positive} and \eqref{eq:conditioned-cell-rough-one-two-point}, this gives, for every $x\in B_z$,
\begin{align}
    \mu_z^{\mathrm{det}}(\phi(x)\le0)
    &\le
    2e^{u\kappa}\nu_z(\phi(x)\le0)
    \le
    C_\beta J^{2n}.
    \label{eq:one-point-bound-deterministic-sharp-cell-tilt}
\end{align}

Set
\begin{align}
    X_z^\omega
    &:=
    \mu_z^{\mathrm{det}}
    \left[
        \exp
        \left\{
            \sum_{x\in B_z}
            \eta_x^{(\alpha)}
            \mathbf 1_{\{\phi(x)\le0\}}
        \right\}
    \right].
    \label{eq:sharp-cell-disorder-factor}
\end{align}
Then
\begin{align}
    Z_{B_z}^{\omega,\sharp}(\Gamma)
    &=
    A_zX_z^\omega.
    \label{eq:sharp-cell-partition-factorization}
\end{align}
Since $\E[e^{\eta_x^{(\alpha)}}]=1$ and the disorder variables are independent,
\begin{align*}
    \E[X_z^\omega]
    &=
    1.
\end{align*}
Furthermore,
\begin{align*}
    X_z^\omega
    &\ge
    \mu_z^{\mathrm{det}}\!\left(\phi(x)>0\ \text{for all }x\in B_z\right)
    \\
    &\ge
    1-
    \sum_{x\in B_z}
    \mu_z^{\mathrm{det}}(\phi(x)\le0)
    \\
    &\ge
    1-C_\beta M^2J^{2n}
    \ge
    \frac12
\end{align*}
for all sufficiently small $u$.
Hence, using
\begin{align*}
    \log t
    &\ge
    (t-1)-(t-1)^2,
    \qquad
    t\ge\frac12,
\end{align*}
we obtain
\begin{align}
    \E[\log X_z^\omega]
    &\ge
    -
    \E
    \left[
        (X_z^\omega-1)^2
    \right].
    \label{eq:log-disorder-factor-second-moment}
\end{align}

Let $\phi^{(1)}$ and $\phi^{(2)}$ be two independent samples from $\mu_z^{\mathrm{det}}$, and define their contact overlap by
\begin{align}
    \mathcal R_{12}
    &:=
    \sum_{x\in B_z}
    \mathbf 1_{\{\phi^{(1)}(x)\le0\}}
    \mathbf 1_{\{\phi^{(2)}(x)\le0\}}.
    \label{eq:sharp-cell-contact-overlap}
\end{align}
A direct computation using the independence of the disorder gives
\begin{align}
    \E
    \left[
        (X_z^\omega-1)^2
    \right]
    &=
    (\mu_z^{\mathrm{det}})^{\otimes2}
    \left[
        e^{\Delta_\alpha\mathcal R_{12}}
    \right]
    -
    1,
    \qquad
    \Delta_\alpha
    :=
    \lambda(2\alpha)-2\lambda(\alpha).
    \label{eq:sharp-cell-disorder-second-moment-identity}
\end{align}
Since $\mu_z^{\mathrm{det}}$ is supported on $D_z^\kappa$, we have $\mathcal R_{12}\le\kappa$.
Hence
\begin{align*}
    e^{\Delta_\alpha\mathcal R_{12}}-1
    &\le
    C_{\alpha,\kappa}\mathcal R_{12}.
\end{align*}
Using \eqref{eq:one-point-bound-deterministic-sharp-cell-tilt}, we obtain
\begin{align}
    \E
    \left[
        (X_z^\omega-1)^2
    \right]
    &\le
    C_{\alpha,\kappa}
    \sum_{x\in B_z}
    \mu_z^{\mathrm{det}}(\phi(x)\le0)^2
    \nonumber\\
    &\le
    C_{\alpha,\beta}M^2J^{4n}.
    \label{eq:sharp-cell-disorder-second-moment-bound}
\end{align}
Combining \eqref{eq:sharp-cell-partition-factorization}, \eqref{eq:log-disorder-factor-second-moment}, and \eqref{eq:sharp-cell-disorder-second-moment-bound} proves \eqref{eq:sharp-cell-disorder-correction}.
\end{proof}

Combining the sharp deterministic expansion of Lemma~\ref{lem:sharp-deterministic-cell-expansion} with the disorder correction of Lemma~\ref{lem:disorder-correction-sharp-cell} gives the sharp cell contribution in the form required by the coarse-grained lower-bound decomposition.
Summing this estimate over all coarse-grained cells yields the following bulk-cell lower bound.

\begin{corollary}[Sharp bulk-cell lower bound]
\label{cor:sharp-bulk-cell-lower-bound}
Assume that $\beta$ is sufficiently large.
Then, for all sufficiently small $u>0$, uniformly over integers $n=n(u)$ satisfying \eqref{eq:admissible-localization-window} and over $\Gamma\in\mathcal U_{N,n,M}$,
\begin{align}
    \sum_z
    \E
    \left[
        \log
        Z_{B_z}^{\omega,\sharp}(\Gamma)
    \right]
    \ge{}&
    N^2
    \left[
        a_1(\beta)uJ^{2n}
        -
        2a_2(\beta)\theta_2(\beta)J^{3n}
    \right]
    \nonumber\\
    &-
    N^2
    r_{\alpha,\beta}^{\mathrm{cell}}(u,n,M),
    \label{eq:sharp-bulk-cell-lower-bound}
\end{align}
where
\begin{align}
    r_{\alpha,\beta}^{\mathrm{cell}}(u,n,M)
    \le
    C_{\alpha,\beta}
    \Biggl[
        &
        \frac{n^4}{M}
        \left(
            uJ^{2n}+J^{3n}
        \right)
        +
        uJ^{3n}
        +
        nJ^{4n}
        +
        J^{4n}
        \nonumber\\
        &+
        u^2J^{2n}
        +
        e^{-c_\beta n^2}
        +
        \frac{u^{10}}{M^2}
    \Biggr].
    \label{eq:sharp-bulk-cell-remainder}
\end{align}
In particular,
\begin{align}
    r_{\alpha,\beta}^{\mathrm{cell}}(u,n,M)
    &=
    o(u^3)
    \qquad
    \text{as }u\rightarrow 0+,
    \label{eq:sharp-bulk-cell-remainder-small}
\end{align}
uniformly over the localization window \eqref{eq:admissible-localization-window}.
\end{corollary}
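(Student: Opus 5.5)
The proof combines Lemma~\ref{lem:sharp-deterministic-cell-expansion} and Lemma~\ref{lem:disorder-correction-sharp-cell} cell by cell and then sums over the $k^2=N^2/M^2$ cells. For each $z$ and each admissible $\Gamma$, the two lemmas give
\begin{align*}
    \E\bigl[\log Z_{B_z}^{\omega,\sharp}(\Gamma)\bigr]
    \ge
    M^2\bigl[a_1(\beta)(e^u-1)J^{2n}-2a_2(\beta)\theta_2(\beta)J^{3n}\bigr]
    -\mathcal E_z^{\mathrm{det}}
    -C_{\alpha,\beta}M^2J^{4n}.
\end{align*}
All constants in both lemmas are uniform in $z$, $N$ and $\Gamma$, so summing over $z$ multiplies each term by $N^2/M^2$. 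Dividing the bound \eqref{eq:deterministic-cell-remainder} by $M^2$ produces the terms $\frac{n^4}{M}(uJ^{2n}+J^{3n})$, $uJ^{3n}$, $nJ^{4n}$, $e^{-c_\beta n^2}$ and $u^{10}/M^2$. The disorder correction contributes the term $J^{4n}$.

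The only remaining step is to replace $e^u-1$ by $u$ in the leading term. Since $e^u-1\ge u$ and the coefficient $a_1(\beta)J^{2n}$ is positive, we could simply use the lower bound directly. Alternatively, the estimate $|e^u-1-u|\le Cu^2$ for $u\le 1$ yields an error of order $u^2J^{2n}$, which explains the presence of that term in \eqref{eq:sharp-bulk-cell-remainder}. Either way, we obtain \eqref{eq:sharp-bulk-cell-lower-bound} with $r^{\mathrm{cell}}_{\alpha,\beta}$ bounded as stated.

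For the smallness claim \eqref{eq:sharp-bulk-cell-remainder-small}, we use $J^n\asymp u$ uniformly on the window \eqref{eq:admissible-localization-window}, together with $n\asymp|\log u|$ and $M\asymp u^{-\chi}$. Each term is then checked separately:
\begin{itemize}
\item $\frac{n^4}{M}(uJ^{2n}+J^{3n})=O(|\log u|^4u^{3+\chi})$;
\item $uJ^{3n}=O(u^4)$;
\item $nJ^{4n}+J^{4n}=O(u^4|\log u|)$;
\item $u^2J^{2n}=O(u^4)$;
\item $e^{-c_\beta n^2}=e^{-c|\log u|^2}=o(u^p)$ for every $p$;
\item $u^{10}/M^2=O(u^{10+2\chi})$.
\end{itemize}
Every term is therefore $o(u^3)$, uniformly over $n$ in the window, because the constants in $J^n\asymp u$ are fixed.

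There is no real obstacle here. The one point requiring care is confirming that both lemmas hold uniformly over $\Gamma$ and $z$, so that the sum over cells is legitimate. This uniformity is asserted explicitly in both statements.
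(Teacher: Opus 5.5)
Your proposal is correct and follows the paper's proof: you combine Lemma~\ref{lem:sharp-deterministic-cell-expansion} and Lemma~\ref{lem:disorder-correction-sharp-cell} cell by cell, sum over the $N^2/M^2$ cells, replace $e^u-1$ by $u$, and then check that each error term is $o(u^3)$ using $J^n\asymp u$, $n\asymp|\log u|$ and $M\asymp u^{-\chi}$. You also note that $e^u-1\ge u$ and that $a_1(\beta)J^{2n}>0$; this is a valid minor simplification of the paper's Taylor step, and it makes the $u^2J^{2n}$ term in the remainder unnecessary.
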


\begin{proof}[Proof of Corollary ~\ref{cor:sharp-bulk-cell-lower-bound}]
Combining Lemma~\ref{lem:sharp-deterministic-cell-expansion} and Lemma~\ref{lem:disorder-correction-sharp-cell}, and summing over the $N^2/M^2$ cells, gives
\begin{align*}
    \sum_z
    \E
    \left[
        \log Z_{B_z}^{\omega,\sharp}(\Gamma)
    \right]
    \ge{}&
    N^2
    \left[
        a_1(\beta)(e^u-1)J^{2n}
        -
        2a_2(\beta)\theta_2(\beta)J^{3n}
    \right]
    \\
    &-
    C_{\alpha,\beta}N^2
    \Biggl[
        \frac{n^4}{M}
        \left(
            uJ^{2n}+J^{3n}
        \right)
        +uJ^{3n}
        +nJ^{4n}
        +J^{4n}
        +e^{-c_\beta n^2}
        +\frac{u^{10}}{M^2}
    \Biggr].
\end{align*}
Since $e^u-1=u+O(u^2)$, replacing $e^u-1$ by $u$ produces an additional error bounded by $Cu^2J^{2n}$.
This proves \eqref{eq:sharp-bulk-cell-lower-bound} and \eqref{eq:sharp-bulk-cell-remainder}.

Finally, by \eqref{eq:localization-height-scale} and \eqref{eq:coarse-graining-scale},
\begin{align*}
    \frac{n^4}{M}
    \left(
        uJ^{2n}+J^{3n}
    \right)
    &=
    O
    \left(
        |\log u|^4u^{3+\chi}
    \right)
    =
    o(u^3).
\end{align*}
Moreover,
\begin{align*}
    uJ^{3n},
    \quad
    J^{4n},
    \quad
    u^2J^{2n}
    &=
    O(u^4),
    \\
    nJ^{4n}
    &=
    O(|\log u|u^4),
\end{align*}
while $e^{-c_\beta n^2}$ is smaller than any power of $u$ and
\begin{align*}
    \frac{u^{10}}{M^2}
    &=
    O(u^{10+2\chi}).
\end{align*}
This proves \eqref{eq:sharp-bulk-cell-remainder-small}.
\end{proof}

\subsection{Contour-region and cross-edge errors}
\label{subsec:5.4}

It remains to estimate the two terms in Proposition~\ref{prop:coarse-grained-lower-bound-decomposition} which were not included in the sharp bulk-cell contribution: the rough partition functions of the contour regions and the cross-edge error.
Since neither term contributes to the leading-order coefficient, the uniform contour-restricted peak estimates are sufficient.

\begin{lemma}[Contour-region and cross-edge errors]
\label{lem:contour-region-cross-edge-errors}
Assume that $\beta$ is sufficiently large.
Then, for all sufficiently small $u>0$, uniformly over integers $n=n(u)$ satisfying \eqref{eq:admissible-localization-window}, over $N$, and over $\Gamma\in\mathcal U_{N,n,M}$,
\begin{align}
    \sum_{\gamma\in\Gamma}
    \E
    \left[
        \log
        Z_{\overline\gamma}^{\omega,\mathrm{rough}}
        (\Gamma)
    \right]
    &\ge
    -C_{\alpha,\beta}
    \frac{N^2n^4}{M}J^{3n},
    \label{eq:total-contour-region-lower-bound}
\end{align}
and
\begin{align}
    C_\star(\beta) \E \left[ \mu_{\Gamma}^{\omega,\mathrm{mix}} \left[ X_{\mathrm{cr}} \right] \right]
    &\le
    C_{\alpha,\beta}
    \frac{N^2n^4}{M}J^{4n}.
    \label{eq:sharp-cross-edge-error-bound}
\end{align}
Consequently,
\begin{align}
    \sum_{\gamma\in\Gamma}
    \E
    \left[
        \log
        Z_{\overline\gamma}^{\omega,\mathrm{rough}}
        (\Gamma)
    \right]
    -
    C_\star(\beta) \E \left[ \mu_{\Gamma}^{\omega,\mathrm{mix}} \left[ X_{\mathrm{cr}} \right] \right]
    &\ge
    -o(N^2u^3),
    \label{eq:contour-cross-edge-negligible}
\end{align}
uniformly over the localization window and over $\Gamma\in\mathcal U_{N,n,M}$.
\end{lemma}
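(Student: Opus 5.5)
The plan is to treat the two estimates separately, using only rough information: the one-, two- and multipoint peak bounds, which hold uniformly under contour restriction, and the geometric counts of Lemma~\ref{lem:coarse-grained-geometry}. There is one preliminary point. The peak bounds of Proposition~\ref{prop:conditioned-cell-peak-estimates}, Steps~1--2, were proved for $\nu_z$ by cylinder erasure. I would first note that the same argument applies verbatim to $\nu_\gamma$, since erasing negative cylinders strictly inside $\overline\gamma$ preserves $A_{N,n}\cap\{U=\Gamma\}$. Hence, for $x,y\in\overline\gamma$, we have $\nu_\gamma(\phi(x)\le0)\le C_\beta J^{2n}$ and $\nu_\gamma(\phi(x)\le0,\phi(y)\le0)\le C_\beta J^{3n}$, and the $k$-point bound $C_{\beta,k}e^{-3\beta n\sqrt k}$ also holds.

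\emph{Contour regions.} Plain Jensen, $\log\nu_\gamma[e^W]\ge\nu_\gamma[W]$, only yields $-C|\overline\gamma|J^{2n}$, because $\E\eta_x^{(\alpha)}=-\lambda(\alpha)$. To reach $J^{3n}$, I would copy the mechanism of Lemma~\ref{lem:disorder-correction-sharp-cell}. Write $Z_\gamma:=Z^{\omega,\mathrm{rough}}_{\overline\gamma}(\Gamma)$.
\begin{itemize}
\item Deterministically, $Z_\gamma\ge\nu_\gamma(\phi>0\text{ on }\overline\gamma)\ge1-C_\beta n^8J^{2n}\ge\tfrac12$, by \eqref{eq:single-contour-region-size}. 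Hence $\log Z_\gamma\ge(Z_\gamma-1)-(Z_\gamma-1)^2$.
\item First moment: $u\ge0$ and $\E e^{\eta_x}=1$ give $\E Z_\gamma\ge\nu_\gamma[e^{-C_\star E_{\rm occ}}]\ge1-C_\star\sum_{E(\overline\gamma)}\nu_\gamma(\text{both}\le0)\ge1-C|\overline\gamma|J^{3n}$. Here $E_{\rm occ}$ is the number of occupied edges in $E(\overline\gamma)$, and I used $|E(\overline\gamma)|\le2|\overline\gamma|$.
\item Second moment: $\E(Z_\gamma-1)^2=\operatorname{Var}Z_\gamma+(\E Z_\gamma-1)^2$. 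The variance equals $\nu_\gamma^{\otimes2}$ of the product of the two tilts times $(e^{\Delta_\alpha\mathcal R_{12}}-1)$, where $\mathcal R_{12}$ is the replica contact overlap. I expect this to be bounded by $C|\overline\gamma|J^{4n}$, using the one-point bound for the diagonal term. The larger contact counts are controlled by the multipoint bound: a configuration with $k$ contacts in $\overline\gamma$ costs at most $(Cn^8)^kC_{\beta,k}e^{-3\beta n\sqrt k}$ against a gain $e^{C_{\alpha}k}$.
\item The term $(\E Z_\gamma-1)^2$ is at most $C(n^8J^{2n})^2$.
\end{itemize}
Summing over $\gamma$ with \eqref{eq:total-contour-volume} (and $|\Gamma|\le\sum_\gamma|\overline\gamma|$) gives \eqref{eq:total-contour-region-lower-bound}, up to terms like $N^2n^{4+16}J^{4n}/M$, which are even smaller.

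\emph{Cross edges.} Under the product measure $\mu_\Gamma^{\omega,\mathrm{mix}}$, the endpoints of $\{x,y\}\in E_{\rm cr}(\Gamma)$ lie in distinct regions $R\ne R'$. Thus $\mu^{\rm mix}[\mathbf 1_{\phi(x)\le0,\phi(y)\le0}]=\mu_R^\omega(\phi(x)\le0)\,\mu_{R'}^\omega(\phi(y)\le0)$. These two factors depend on disjoint disorder families, so the $\E$-expectation also factorizes. It therefore suffices to show $\E\mu_R^\omega(\phi(x)\le0)\le C_{\alpha,\beta}J^{2n}$ for every region $R$.
\begin{itemize}
\item For a cell, the partition function is at least $\tfrac12$ deterministically (no-contact configurations, as in \eqref{eq:deterministic-cell-normalization-positive}). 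So $\E\mu_z^{\omega,\sharp}(\phi(x)\le0)\le2\,\nu_z\bigl[\mathbf 1_{\phi(x)\le0}\mathbf 1_{D_z^\kappa}e^{V_z}\E e^{\sum\eta_x^{(\alpha)}\mathbf 1}\bigr]\le2e^{u\kappa}\nu_z(\phi(x)\le0)\le CJ^{2n}$.
\item For a contour region, the same argument applies with $Z_\gamma\ge\tfrac12$. It needs $\nu_\gamma[\mathbf 1_{\phi(x)\le0}e^{u N_\gamma}]\le CJ^{2n}$, where $N_\gamma$ is the number of contacts in $\overline\gamma$. This follows from $u N_\gamma\le u\,Cn^8=o(1)$.
\end{itemize}
Multiplying by $|E_{\rm cr}(\Gamma)|\le CN^2n^4/M$ from \eqref{eq:cross-edge-count} yields \eqref{eq:sharp-cross-edge-error-bound}.

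Finally, \eqref{eq:contour-cross-edge-negligible} follows from $J^n\asymp u$ and \eqref{eq:exceptional-regions-negligible}, since both bounds are $O(N^2|\log u|^4u^{3+\chi})$.

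I expect the main obstacle to be the variance bound in the contour regions. Unlike cells, these regions carry no cutoff $D^\kappa$, so $e^{\Delta_\alpha\mathcal R_{12}}$ is not bounded a priori. The first thing to try is to split according to the contact count $k$ and sum the multipoint bound against $(Cn^8)^ke^{Ck}$. If the $k$-dependence of $C_{\beta,k}$ is not uniform enough, I would instead introduce a cutoff $\{N_\gamma\le\kappa\}$ inside $Z_\gamma$. That cutoff only lowers $Z_\gamma$, costs $O(n^{8\kappa}J^{15n})$ in probability, and makes the overlap bounded exactly as in Lemma~\ref{lem:disorder-correction-sharp-cell}.
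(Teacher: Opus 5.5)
Your cross-edge argument is the paper's, and your contour-region argument also reduces to the paper's, provided you repair two points.

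\textbf{The cutoff is required, not a fallback.} Without it the second moment genuinely diverges. The cause is structural, not the $k$-dependence of $C_{\beta,k}$. The multipoint bound decays only like $e^{-3\beta n\sqrt k}$ in the number $k$ of contacts, while the overlap weight grows like $e^{\Delta_\alpha k}$, and $k$ can be as large as $|\overline\gamma|\approx n^8$. This growth is actually attained. Under $\nu_\gamma$ the intensity of $\gamma$ is, given the contour set, geometric. So if $\varepsilon(\gamma)=-1$, it is at least $n$ with probability $e^{-\beta(n-1)|\widetilde\gamma|}$, and then almost every site of $\overline\gamma$ is a contact. Two replicas doing this contribute roughly $\exp\{(\Delta_\alpha-4C_\star(\beta))|\overline\gamma|-2\beta n|\widetilde\gamma|\}$ to $\nu_\gamma^{\otimes2}[w_1w_2e^{\Delta_\alpha\mathcal R_{12}}]$. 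For admissible contours with $|\widetilde\gamma|\approx n^4$ and $|\overline\gamma|\approx n^8/16$, this is enormous whenever $\Delta_\alpha>4C_\star(\beta)$, e.g.\ Gaussian disorder with $\alpha$ large. So you must insert $\mathbf 1_{\{N_\gamma\le\kappa\}}$ into $Z_\gamma$, exactly as the paper does with $D_R^\kappa$.

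\textbf{The erasure argument does not transfer verbatim to $\nu_\gamma$.} The argument of Proposition~\ref{prop:conditioned-cell-peak-estimates} misses the pinned contour. $\gamma$ encloses every site of $\overline\gamma$ and cannot be erased, since that would change $U$. When $\varepsilon(\gamma)=-1$, its intensity $k_\gamma\ge1$ counts toward the negative intensity needed for $\phi(x)\le0$. Your union bound over cylinders strictly inside $\overline\gamma$ therefore does not cover that event. The fix is to also erase the excess $k_\gamma-1$. Lowering the intensity to $1$ keeps $\gamma$ present, preserves $A_{N,n}\cap\{U=\Gamma\}$, and saves $\beta(k_\gamma-1)|\widetilde\gamma|$. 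The union bound then runs at level $n-1$, which is precisely the paper's shift $b_\gamma=n+\varepsilon(\gamma)$. The paper reaches the same bounds through the conditional-law identity and Proposition~\ref{prop:restricted-peak-estimates}.

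With these repairs, your expansion $\log Z\ge(Z-1)-(Z-1)^2$, with $\mathbb E Z=1-O(|\overline\gamma|J^{3n})$, is a repackaging of the paper's argument. The paper factors $Z\ge A_R^{\mathrm{det}}X_R^\omega$. It gets $\log A_R^{\mathrm{det}}\ge-C|R|J^{3n}$ from Jensen and the two-point bound, and $\mathbb E\log X_R^\omega\ge-C|R|J^{4n}$ from the mean-one second moment with overlap at most $\kappa$. The paper's split makes the first-order disorder term vanish exactly. Yours carries an extra $(\mathbb E Z-1)^2$ term, which is harmless.
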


\begin{proof}[Proof of Lemma ~\ref{lem:contour-region-cross-edge-errors}]
Fix an admissible realization $\Gamma$.
We first estimate a single contour region $R=\overline\gamma$, $\gamma\in\Gamma$.
Put
\begin{align*}
b_\gamma:=n+\varepsilon(\gamma)\in\{n-1,n+1\}.
\end{align*}
Here $\Gamma$ fixes signed external contours, not their cylinder intensities.
Removing one compulsory signed unit of the external cylinder gives the conditional-law identity of \cite[(4.17)]{lacoin2021solid}.
In the present notation,
\begin{align}
\nu_R
&=
\P_{R,\beta}^{b_\gamma}
\left(
\,\cdot\,
\middle|
\mathcal A^{\mathrm{short},b_\gamma}_{R,n},
\ \varepsilon(\gamma)(\phi-b_\gamma)\ge0
\text{ on }\Delta_\gamma^-
\right).
\label{eq:external-contour-region-law}
\end{align}
Here $\mathcal A^{\mathrm{short},b}_{R,n}$ requires every contour relative to boundary height $b$ to have geometric length at most $n^4$.
After setting $\psi=b_\gamma-\phi$, the law is a zero-boundary contour-restricted SOS measure on $R$.
For all sufficiently small $u$, we have $n\ge2$, and hence $b_\gamma\ge1$.
Applying Proposition~\ref{prop:restricted-peak-estimates} at level $b_\gamma$, and using
\begin{align*}
J^{2b_\gamma}
&\le J^{-2}J^{2n},
\\
J^{3b_\gamma}
&\le J^{-3}J^{3n},
\\
e^{-3\beta b_\gamma\sqrt r}
&\le e^{3\beta\sqrt r}e^{-3\beta n\sqrt r},
\end{align*}
we obtain the following estimates, with the extra factors absorbed into $C_\beta$ and $C_{\beta,r}$.

Therefore Proposition~\ref{prop:restricted-peak-estimates}, applied at level $n-1$, gives
\begin{align}
    \nu_R(\phi(x)\le0)
    &\le
    C_\beta J^{2(n-1)}
    \le
    C_\beta J^{2n},
    \qquad x\in R,
    \nonumber\\
    \nu_R(\phi(x)\le0,\phi(y)\le0)
    &\le
    C_\beta J^{3(n-1)}
    \le
    C_\beta J^{3n},
    \qquad \{x,y\}\in E(R),
    \nonumber\\
    \nu_R
    \left(
        \phi(x_i)\le0
        \text{ for all }1\le i\le r
    \right)
    &\le
    C_{\beta,r}
    e^{-3\beta(n-1)\sqrt r}
    \le
    C_{\beta,r}
    e^{-3\beta n\sqrt r}.
    \label{eq:contour-region-rough-peak-estimates}
\end{align}
for every fixed $r\ge1$.

Let
\begin{align*}
    D_R^\kappa
    &:=
    \left\{
        \sum_{x\in R}
        \mathbf 1_{\{\phi(x)\le0\}}
        \le\kappa
    \right\},
    \qquad
    \kappa=100.
\end{align*}
Since $|R|\le Cn^8$, the multipoint estimate and a union bound yield
\begin{align}
    \nu_R\bigl((D_R^\kappa)^c\bigr)
    &\le
    C_\beta |R|J^{3n}.
    \label{eq:contour-region-cutoff}
\end{align}

Define
\begin{align}
    A_R^{\mathrm{det}}
    &:=
    \nu_R
    \left[
        \mathbf 1_{D_R^\kappa}
        \exp
        \left\{
            u\sum_{x\in R}\mathbf 1_{\{\phi(x)\le0\}}
            -
            C_\star(\beta)
            \sum_{\{x,y\}\in E(R)}
            \mathbf 1_{\{\phi(x)\le0,\phi(y)\le0\}}
        \right\}
    \right].
    \label{eq:contour-region-deterministic-normalization}
\end{align}
Using \eqref{eq:contour-region-cutoff}, Jensen's inequality, and the two-point estimate above, we obtain
\begin{align}
    \log A_R^{\mathrm{det}}
    &\ge
    -C_\beta |R|J^{3n}.
    \label{eq:contour-region-deterministic-lower-bound}
\end{align}

Let $\mu_R^{\mathrm{det}}$ be the corresponding normalized tilted measure.
Since the tilt is uniformly bounded on $D_R^\kappa$ and $A_R^{\mathrm{det}}$ is bounded away from zero,
\begin{align}
    \mu_R^{\mathrm{det}}(\phi(x)\le0)
    &\le
    C_\beta J^{2n},
    \qquad x\in R.
    \label{eq:contour-region-deterministic-tilt-one-point}
\end{align}
The disorder can now be treated exactly as in Lemma~\ref{lem:disorder-correction-sharp-cell}.
Namely, writing
\begin{align*}
    X_R^\omega
    &:=
    \mu_R^{\mathrm{det}}
    \left[
        \exp
        \left\{
            \sum_{x\in R}
            \eta_x^{(\alpha)}
            \mathbf 1_{\{\phi(x)\le0\}}
        \right\}
    \right],
\end{align*}
we have $\E[X_R^\omega]=1$, while the replica overlap estimate gives
\begin{align*}
    \E\left[(X_R^\omega-1)^2\right]
    &\le
    C_{\alpha,\beta}|R|J^{4n}.
\end{align*}
Since $X_R^\omega\ge1/2$ for all sufficiently small $u$,
\begin{align*}
    \E[\log X_R^\omega]
    &\ge
    -C_{\alpha,\beta}|R|J^{4n}.
\end{align*}
Therefore
\begin{align}
    \E
    \left[
        \log Z_R^{\omega,\mathrm{rough}}
    \right]
    &\ge
    -C_{\alpha,\beta}|R|J^{3n}.
    \label{eq:single-contour-region-lower-bound}
\end{align}
Summing over $R=\overline\gamma$ and using \eqref{eq:total-contour-volume} proves \eqref{eq:total-contour-region-lower-bound}.

We next estimate the cross-edge term.
We claim that, for either type of local tilted measure,
\begin{align}
    \E
    \left[
        \mu_z^{\omega,\sharp}(\phi(x)\le0)
    \right]
    &\le
    C_{\alpha,\beta}J^{2n},
    \qquad x\in B_z,
    \nonumber\\
    \E
    \left[
        \mu_{\overline\gamma}^{\omega,\mathrm{rough}}
        (\phi(x)\le0)
    \right]
    &\le
    C_{\alpha,\beta}J^{2n},
    \qquad x\in\overline\gamma.
    \label{eq:mixed-local-quenched-one-point}
\end{align}
Indeed, in both cases the no-contact event has weight one and has probability $1-o(1)$ under the underlying SOS measure.
Hence the corresponding local partition function is bounded below by $1/2$.
Averaging over the centered disorder removes the random exponential, while the deterministic tilt is bounded above by $e^{u\kappa}$ in the sharp cells and by $e^{u|R|}=1+o(1)$ in the contour regions.
The one-point estimates above and \eqref{eq:conditioned-cell-rough-one-two-point} therefore imply \eqref{eq:mixed-local-quenched-one-point}.
For every cross-edge $\{x,y\}\in E_{\mathrm{cr}}(\Gamma)$, the endpoints lie in distinct regions, say $R_x$ and $R_y$.
Conditionally on the disorder, the product structure of $\mu_\Gamma^{\omega,\mathrm{mix}}$ gives
\[
\mu_\Gamma^{\omega,\mathrm{mix}}
\bigl(
\phi(x)\le0,\phi(y)\le0
\bigr)
=
\mu_{R_x}^{\omega}(\phi(x)\le0)
\mu_{R_y}^{\omega}(\phi(y)\le0).
\]
Moreover, the two factors depend on disjoint families of disorder variables, since $R_x\cap R_y=\varnothing$.
Hence they are independent under $\mathbb P$, and by \eqref{eq:mixed-local-quenched-one-point},
\begin{align*}
\mathbb E\left[
\mu_\Gamma^{\omega,\mathrm{mix}}
\bigl(
\phi(x)\le0,\phi(y)\le0
\bigr)
\right]
\le C_{\alpha,\beta}J^{4n}.
\end{align*}

Finally,
\begin{align*}
    \frac{n^4}{M}J^{3n}
    &=
    O\left(|\log u|^4u^{3+\chi}\right)
    =
    o(u^3),
    \\
    \frac{n^4}{M}J^{4n}
    &=
    O\left(|\log u|^4u^{4+\chi}\right)
    =
    o(u^3),
\end{align*}
by \eqref{eq:localization-height-scale} and \eqref{eq:coarse-graining-scale}.
This proves \eqref{eq:contour-cross-edge-negligible}.
\end{proof}

\subsection{Proof of the sharp lower bound}
\label{subsec:5.5}

We now combine the sharp bulk-cell estimate with the bounds on the contour-region and cross-edge terms.
No further local estimates are needed.

\begin{proof}[Proof of Proposition~\ref{prop:sharp-lower-bound}]
Let $u>0$ be sufficiently small and fix an integer $n=n(u)$ satisfying \eqref{eq:admissible-localization-window}.
Recall that $M$ is given by \eqref{eq:coarse-graining-scale}.
For fixed $u$, both $n$ and $M$ are fixed, and it is enough to let $N\rightarrow\infty$ along multiples of $M$.

By Proposition~\ref{prop:coarse-grained-lower-bound-decomposition}, Corollary~\ref{cor:sharp-bulk-cell-lower-bound}, and Lemma~\ref{lem:contour-region-cross-edge-errors}, uniformly over $\Gamma\in\mathcal U_{N,n,M}$,
\begin{align}
    \frac{1}{N^2}
    \E
    \left[
        \log Z_{N,n}^{\omega}(u)
    \right]
    &\ge
    a_1(\beta)uJ^{2n}
    -
    2a_2(\beta)\theta_2(\beta)J^{3n}
    \nonumber\\
    &-
    r_{\alpha,\beta}^{\mathrm{tot}}(u,n,M),
    \label{eq:finite-volume-sharp-lower-bound-final}
\end{align}
where
\begin{align}
    r_{\alpha,\beta}^{\mathrm{tot}}(u,n,M)
    &:=
    r_{\alpha,\beta}^{\mathrm{cell}}(u,n,M)
    +
    C_\beta e^{-c_\beta n^4}
    +
    C_{\alpha,\beta}
    \frac{n^4}{M}
    \left(
        J^{3n}+J^{4n}
    \right).
    \label{eq:total-error-density-sharp-lower-bound}
\end{align}
Here we used \eqref{eq:restriction-to-good-event} for the contribution of the good-contour restriction.

We next estimate the total error.
By \eqref{eq:sharp-bulk-cell-remainder}, together with \eqref{eq:localization-height-scale} and \eqref{eq:coarse-graining-scale},
\begin{align*}
    \frac{n^4}{M}
    \left(
        uJ^{2n}+J^{3n}
    \right)
    &=
    O
    \left(
        |\log u|^4u^{3+\chi}
    \right).
\end{align*}
All the remaining polynomial terms in \eqref{eq:sharp-bulk-cell-remainder} are of order $O(|\log u|u^4)$ or smaller.
Moreover, $e^{-c_\beta n^2}$ and $e^{-c_\beta n^4}$ are smaller than every power of $u$, while
\begin{align*}
    \frac{n^4}{M}J^{4n}
    &=
    O
    \left(
        |\log u|^4u^{4+\chi}
    \right).
\end{align*}
Consequently,
\begin{align}
    r_{\alpha,\beta}^{\mathrm{tot}}(u,n,M)
    &\le
    C_{\alpha,\beta}|\log u|^4 u^{3+\chi}
    =
    o(u^3),
    \label{eq:quantitative-total-error-sharp-lower-bound}
\end{align}
uniformly over all $n$ satisfying \eqref{eq:admissible-localization-window}.

Letting $N\rightarrow\infty$ along multiples of $M$ in \eqref{eq:finite-volume-sharp-lower-bound-final} and using \eqref{eq:free-energy-finite-volume-representation}, we obtain
\begin{align}
    \overline{F}_\beta(\alpha,u)
    &\ge
    a_1(\beta)uJ^{2n}
    -
    2a_2(\beta)\theta_2(\beta)J^{3n}
    -
    C_{\alpha,\beta}
    |\log u|^4u^{3+\chi},
    \label{eq:sharp-lower-bound-uniform-window}
\end{align}
uniformly over the localization window.

Finally, let $n_u$ be a maximizer in \eqref{eq:homogeneous-variational-theta2}.
By Lemma~\ref{lem:localization-window}, $n_u$ satisfies \eqref{eq:admissible-localization-window}.
Taking $n=n_u$ in \eqref{eq:sharp-lower-bound-uniform-window} therefore gives
\begin{align}
    \overline{F}_\beta(\alpha,u)
    &\ge
    \overline{F}_{\mathrm{homo}}(\beta,u)
    -
    C_{\alpha,\beta}
    |\log u|^4u^{3+\chi}
    \\
    &=
    \overline{F}_{\mathrm{homo}}(\beta,u)
    -
    o(u^3).
    \label{eq:conclusion-sharp-lower-bound}
\end{align}
This proves Proposition~\ref{prop:sharp-lower-bound}.
In particular, the quantitative lower bound holds with exponent $\chi$ and logarithmic power $4$.
\end{proof}

\appendix

\section{Geometry of the coarse-grained regions}
\label{app:coarse-grained-geometry}

This appendix contains the geometric estimates used in the coarse-grained reduction of Section~\ref{chap:4}.
The main point is that every grid-crossing contour is short and intersects the coarse-graining grid, so the regions affected by such contours are confined to a thin neighborhood of the grid.
This yields the volume and edge bounds needed to separate the deterministic cell cores from the lower-order exceptional regions.

\begin{proof}[Proof of Lemma~\ref{lem:coarse-grained-geometry}]
Every contour in an admissible realization $\Gamma$ has length at most $n^4$ and intersects the grid $H_M$.
Hence its contour and interior are contained in an $O(n^4)$-neighborhood of $H_M$.
On the other hand, $C_z^\circ$ is at distance at least $4n^4$ from $\partial C_z$, which is contained in the grid.
Therefore every site at distance at most one from $C_z^\circ$ is separated from $H_M$ by at least $4n^4-1$, and no contour in $\Gamma$ can enclose such a site.
This proves \eqref{eq:core-nearest-neighbor-enlargement}, and in particular $C_z^\circ\subset B_z$.

The set $C_z\setminus C_z^\circ$ is a boundary strip of width $4n^4$ and contains at most $CMn^4$ sites and edges.
Consequently,
\begin{align*}
|C_z^\circ|
&=
M^2+O(Mn^4),
\\
|E(C_z^\circ)|
&=
2M^2+O(Mn^4),
\end{align*}
which proves \eqref{eq:core-cardinality-estimates}.
Since $C_z^\circ\subset B_z\subset C_z$, every edge in $E(B_z)\setminus E(C_z^\circ)$ lies in the same boundary strip.
Hence
\begin{align*}
\left|
E(B_z)\setminus E(C_z^\circ)
\right|
&\le
CMn^4,
\end{align*}
proving \eqref{eq:cell-boundary-edge-estimate}.

For a contour $\gamma\in\Gamma$, the planar isoperimetric inequality gives
\begin{align*}
|\overline\gamma|
&\le
C|\widetilde\gamma|^2
\le
Cn^8,
\end{align*}
which proves \eqref{eq:single-contour-region-size}.

Every contour in $\Gamma$, together with its interior, is contained in the $n^4$-neighborhood of $H_M$.
The total number of lattice sites in this neighborhood is at most
\begin{align*}
C\frac{N^2n^4}{M}.
\end{align*}
Since the interiors of the external contours are pairwise disjoint, we obtain
\begin{align*}
\sum_{\gamma\in\Gamma}
|\overline\gamma|
&\le
C\frac{N^2n^4}{M},
\end{align*}
which proves \eqref{eq:total-contour-volume}.

Finally, every cross-edge either joins neighboring cell regions across the coarse-graining grid or belongs to the edge boundary of one of the contour interiors.
For a finite set $A\subset\mathbb Z^2$, let
\[
\partial_E A
:=
\bigl\{\{x,y\}:x\in A,\ y\notin A,\ x\sim y\bigr\}.
\]
Since every vertex has degree four,
\[
|\partial_E A|\le 4|A|.
\]
Consequently, using \eqref{eq:total-contour-volume},
\begin{align}
|E_{\mathrm{cr}}(\Gamma)|
&\le C\frac{N^2}{M}
   +\sum_{\gamma\in\Gamma}|\partial_E\overline\gamma|
   \notag\\
&\le C\frac{N^2}{M}
   +4\sum_{\gamma\in\Gamma}|\overline\gamma|
   \notag\\
&\le C\frac{N^2n^4}{M}.
\end{align}
This proves \eqref{eq:cross-edge-count} and completes the proof.
\end{proof}

\section{Local comparison for conditioned cell measures}
\label{app:local-comparison}

This appendix provides the local comparison used in Subsection~\ref{subsec:5.1}.
The coarse-graining procedure conditions on the external contours intersecting the grid and therefore modifies the SOS law inside each cell.
We show that, sufficiently deep inside the deterministic core, this conditioning changes the probabilities of fixed-radius local events only by an exponentially small amount.

The proof proceeds by identifying the reflected conditional law as a contour-restricted zero-boundary SOS measure.
We then remove the short-contour restriction and the boundary compatibility constraints.
This allows the sharp infinite-volume peak and cluster estimates to be transferred to the conditioned cell law.

Recall that $\nu_z$ denotes the conditional SOS law in the cell region $B_z$, and let $\overline\nu_z$ denote the law under $\nu_z$ of the reflected field
\begin{align*}
    \psi
    &:=
    n-\phi.
\end{align*}

\begin{lemma}[Local comparison in a coarse-grained cell]
\label{lem:local-comparison-conditioned-cell}
Assume that $\beta$ is sufficiently large.
For every fixed $R<\infty$, there exist constants $c=c(\beta,R)>0$ and $C=C(\beta,R)<\infty$ such that the following holds for all sufficiently small $u>0$.

Let $n=n(u)$ satisfy \eqref{eq:admissible-localization-window}, let $\Gamma\in\mathcal U_{N,n,M}$, and let $x\in C_z^\circ$.
Suppose that $A_x$ is supported in $B(x,R)$ and can be written as
\begin{align*}
    A_x
    &=
    A_x^+\cap A_x^-,
\end{align*}
where $A_x^+$ is increasing and $A_x^-$ is decreasing.
Either $A_x^+$ or $A_x^-$ may be the whole configuration space, so that this includes purely increasing and purely decreasing events.
The event $A_x$ may depend on $n$.

If $A_0$ denotes the translate of $A_x$ to a neighborhood of the origin, then
\begin{align}
    \left|
        \overline\nu_z(A_x)
        -
        \P_\beta(A_0)
    \right|
    &\le
    Ce^{-cn^2}.
    \label{eq:local-comparison-cell}
\end{align}
The estimate is uniform in $z$, $N$, and $\Gamma\in\mathcal U_{N,n,M}$.
\end{lemma}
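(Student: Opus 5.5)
The plan is to pass from $\overline\nu_z$ to $\P_\beta$ through three comparisons, each costing at most $Ce^{-cn^2}$.

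\emph{Step 1: identify the reflected law.} Conditionally on $U=\Gamma$ and on $A_{N,n}$, the field in $B_z$ is determined by the cylinders whose interiors lie in $B_z$. By Step~1 in the proof of Proposition~\ref{prop:conditioned-cell-peak-estimates}, these cylinders avoid $H_M$ and are disjoint from the interiors of the contours in $\Gamma$. The only constraints are the short-contour condition $|\widetilde\gamma|\le n^4$ and compatibility with the fixed outer cylinders of $\Gamma$. After reflecting by $\psi=n-\phi$, we expect $\overline\nu_z$ to be a zero-boundary contour-restricted measure $\P_{\mathcal L,\Lambda,\beta}$ in the sense of \eqref{eq:contour-restricted-sos-measure}. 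Here $\Lambda$ is the cell $C_z$, or the relevant component of $B_z$, and $\mathcal L$ consists of the contours of length at most $n^4$ that avoid the grid and the interiors $\overline\gamma$, $\gamma\in\Gamma$. The key geometric input is \eqref{eq:core-nearest-neighbor-enlargement} together with the fact that $x\in C_z^\circ$ lies at distance at least $4n^4$ from $H_M$. Any contour in $\mathsf{Cont}\setminus\mathcal L$ whose interior comes within distance $R$ of $x$ must therefore have length greater than $n^4$, or else reach distance about $4n^4$ and hence have length of order $n^4$.

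\emph{Step 2: compare the restricted measure with the unrestricted finite-volume measure.} Using the contour law \eqref{eq:restricted-contour-law}, I would couple $\P_{\mathcal L,\Lambda,\beta}$ with $\P_{\Lambda',\beta}$, where $\Lambda'$ is a large box around $x$ inside the cell, say of radius $2n^4$. The coupling uses the standard disagreement-percolation argument for the contour gas, in the form of the cluster expansion behind Theorem~\ref{thm:exponential-convergence-SOS}. The two laws of $\psi|_{B(x,R)}$ differ only if some chain of mutually incompatible contours connects $B(x,R)$ to a forbidden contour or to $\partial\Lambda'$. Under the domination \eqref{eq:restricted-contour-domination}, with $\beta>4\log 3$, such a chain of total length $L$ has probability at most $e^{-c\beta L}$. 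Since $L\gtrsim n^4$, this gives an error $e^{-cn^4}$, which is better than the required $e^{-cn^2}$. The monotone structure $A_x=A_x^+\cap A_x^-$ can be used as a fallback. FKG for the SOS field under contour restrictions compares the constrained measure with measures carrying boundary heights $0$ and $\pm1$ on increasing events, and then $A_x^+\cap A_x^-$ is bracketed by writing $\P(A^+\cap A^-)=\P(A^+)-\P(A^+\setminus A^-)$ and treating each term monotonically.

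\emph{Step 3: pass to infinite volume.} I would apply Theorem~\ref{thm:exponential-convergence-SOS} with $g=\mathbf 1_{A_x}$, which is supported on $B(x,R)$ with $|A|\le CR^2$ and lies at distance $\asymp n^4$ from $\partial\Lambda'$. This gives $|\E_{\Lambda',\beta}[\mathbf 1_{A_x}]-\P_\beta(A_0)|\le CR^2e^{-c\beta n^4}$. Translation invariance of $\P_\beta$ then identifies the limit with $\P_\beta(A_0)$. All bounds are uniform in $z$, $N$, $\Gamma$, and in the possibly $n$-dependent event, since only the support radius $R$ enters.

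The main obstacle is Step 2. The identification in Step 1 is not literally a zero-boundary restricted measure on a simply connected domain, because $B_z$ may be multiply connected and the outer cylinders in $\Gamma$ impose sign constraints on their external neighborhoods. I would handle this by working directly with the contour ensemble inside $C_z$. There I would treat $\Gamma$ as frozen and check that the chain-of-incompatibility estimate only ever uses the domination \eqref{eq:restricted-contour-domination}, which holds under any restriction. The stated rate $e^{-cn^2}$ leaves ample slack, so even a lossy version of this estimate would suffice.
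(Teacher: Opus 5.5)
Your route is correct in outline, but it differs from the paper's. You identify $\overline\nu_z$ directly as a contour-restricted gas. You then compare it with $\P_{\Lambda',\beta}$ on a box of radius $2n^4$ through a single disagreement-percolation coupling of the hard-core contour gas, and finish with Theorem~\ref{thm:exponential-convergence-SOS}.

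The paper instead conditions further on the outer cylinders $\widehat\Upsilon_z^{\mathrm{out}}$, those not contained in $C_z^{\mathrm{in}}$. This produces a domain $\Lambda$ with $\operatorname{dist}(K,\partial\Lambda)\ge n^4$. On $\Lambda$ the reflected field is a zero-boundary SOS measure, conditioned on one-sided sign constraints on parts of $\partial^-\Lambda$ and on the length cap. The paper drops the cap by a Peierls bound on the conditioning event. It then removes the sign constraints by an FKG sandwich between $\P_\Lambda^-$ and $\P_\Lambda^+$. Each of these is compared with $\P_\beta$ through Theorem~\ref{thm:exponential-convergence-SOS} and an exploration of boundary-adjacent contours.

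That argument uses only monotonicity, Peierls bounds and the black-box convergence theorem. This is why the lemma is stated for events of the form $A_x^+\cap A_x^-$. Your coupling instead gives a total-variation bound of order $e^{-cn^4}$ for the whole law of $\psi|_{B(x,R)}$, so no monotone structure is needed. The price is that the coupling must be built by hand, since Theorem~\ref{thm:exponential-convergence-SOS} only covers zero-boundary boxes.

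When you write it out, three points need care.
\begin{enumerate}
\item The identification is exact with $\Lambda=C_z$ once compatibility with the contours of $\Gamma$ is put into $\mathcal L$. This is a pairwise condition that does not involve intensities, so the multiple connectivity of $B_z$ is not an obstacle.
\item Your \emph{chain of mutually incompatible contours} has to be a disagreement path alternating between the two coupled configurations, since a single compatible configuration contains no incompatible pair. Its probability is controlled by the van den Berg--Maes domination by independent percolation with parameters $e^{-\beta|\widetilde\gamma|}$. The domination \eqref{eq:restricted-contour-domination} enters only for the polymers in the symmetric difference of the two contour families.
\item Your FKG fallback is not directly available. The cap $|\widetilde\gamma|\le n^4$ is not preserved under pointwise maxima and minima of height functions, so conditioning on it can destroy the FKG lattice condition. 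The cap must be removed first, which is exactly the order of Steps~2--3 in the paper's proof.
\end{enumerate}
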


\begin{proof}[Proof of Lemma ~\ref{lem:local-comparison-conditioned-cell}]
Set
\begin{align*}
K
&:=
\operatorname{supp}(A_x)
\subset B(x,R).
\end{align*}
If $K=\varnothing$, the estimate is immediate.
We therefore assume that $K\neq\varnothing$.
We divide the proof into three steps.

\medskip
\noindent
\emph{Step 1: Identification of the reflected conditional law and its allowed contours.}
We use the cylinder representation introduced in Subsection~\ref{subsec:3.2}.
We introduce the intermediate cell interior
\begin{align}
C_z^{\mathrm{in}}
:=
\left\{
x\in C_z:
\operatorname{dist}(x,\partial C_z)\ge n^4
\right\}.
\label{eq:intermediate-cell-interior}
\end{align}
In particular,
\[
C_z^\circ\subset C_z^{\mathrm{in}},
\]
and the distance between $C_z^\circ$ and the boundary of $C_z^{\mathrm{in}}$ is of order $3n^4$.
Thus a cylinder $\widehat\gamma$ records a signed contour $\gamma$, with geometric contour $\widetilde\gamma$, sign $\varepsilon(\gamma)\in\{\pm1\}$, and intensity $k(\widehat\gamma)\in\mathbb N$.
Let $\widehat\Upsilon_n(\phi)$ denote the full cylinder collection of $\phi$ with boundary height $n$, and define
\begin{align}
\widehat\Upsilon_z^{\mathrm{out}}(\phi)
&:=
\left\{
\widehat\gamma\in\widehat\Upsilon_n(\phi):
\widetilde\gamma\not\subset C_z^{\mathrm{in}}
\right\}.
\label{eq:outer-cylinder-data}
\end{align}
Every contour intersecting the coarse-graining grid belongs to $\widehat\Upsilon_z^{\mathrm{out}}$.
Conversely, a contour contained in $C_z^{\mathrm{in}}$ cannot intersect the grid.
Hence $U$ is measurable with respect to $\widehat\Upsilon_z^{\mathrm{out}}$.

Condition further on an admissible realization
\begin{align*}
\widehat\Upsilon_z^{\mathrm{out}}
&=
\widehat G
\end{align*}
in addition to $A_{N,n}\cap\{U=\Gamma\}$.
Only cylinders in $\widehat G$ whose interiors intersect $C_z^{\mathrm{in}}$ can affect the deletion inside $C_z^{\mathrm{in}}$.
Fix such a cylinder $\widehat\gamma\in\widehat G$.
Since $\widetilde\gamma\not\subset C_z^{\mathrm{in}}$, either $\widetilde\gamma$ intersects $\partial C_z^{\mathrm{in}}$ or $\overline\gamma$ contains $C_z^{\mathrm{in}}$.
The latter is impossible for all sufficiently small $u$: indeed, $M\ge16n^4$, whereas a contour enclosing $C_z^{\mathrm{in}}$ must have length of order $M$, contradicting $|\widetilde\gamma|\le n^4$ on $A_{N,n}$.

Hence every cylinder in $\widehat G$ whose interior meets $C_z^{\mathrm{in}}$ has geometric contour intersecting $\partial C_z^{\mathrm{in}}$.
Since $|\widetilde\gamma|\le n^4$, its geometric contour, its interior inside $C_z^{\mathrm{in}}$, and the adjacent sites are contained in a $(2n^4+O(1))$-neighborhood of $\partial C_z$.

On the other hand, since $x\in C_z^\circ$ and $K\subset B(x,R)$,
\begin{align*}
\operatorname{dist}(K,\partial C_z)
&\ge 4n^4-R,\\
\operatorname{dist}(K,\partial C_z^{\mathrm{in}})
&\ge 3n^4-R.
\end{align*}
Therefore, for every cylinder in $\widehat G$ whose interior meets $C_z^{\mathrm{in}}$,
\begin{align*}
\operatorname{dist}(K,\overline\gamma)
&\ge
2n^4-R-O(1)
\ge
n^4
\end{align*}
for all sufficiently large $n$.
Cylinders whose interiors do not meet $C_z^{\mathrm{in}}$ clearly do not affect the deletion.

It follows that $K$ is not removed.
We may therefore define
\begin{align}
\Lambda
&:=
\text{the connected component containing $K$ of }
C_z^{\mathrm{in}}
\setminus
\bigcup_{\widehat\gamma\in\widehat G}
\overline\gamma.
\label{eq:local-comparison-domain}
\end{align}
Moreover,
\begin{align}
K
&\subset\Lambda,
\qquad
\operatorname{dist}(K,\partial\Lambda)\ge n^4.
\label{eq:local-domain-buffer}
\end{align}

The background height induced on $\Lambda$ by the fixed cylinders is exactly $n$.
Indeed, the cylinder reconstruction formula gives
\begin{align}
\phi(v)
&=
n
+
\sum_{\widehat\gamma\in\widehat\Upsilon_n(\phi)}
\varepsilon(\gamma)k(\widehat\gamma)
\mathbf 1_{\overline\gamma}(v).
\label{eq:cylinder-reconstruction-local}
\end{align}
By the construction of $\Lambda$ and the preceding separation argument, no cylinder in $\widehat G$ encloses a point of $\Lambda$.
Hence the contribution of all fixed cylinders vanishes identically on $\Lambda$.

The compatibility conditions with $\widehat G$ determine two, not necessarily disjoint, subsets $U^+,U^-$ of the internal boundary
\begin{align*}
\partial^-\Lambda
&:=
\left\{
v\in\Lambda:
v\sim w
\text{ for some }w\notin\Lambda
\right\}.
\end{align*}
With the labels chosen after reflection, the remaining field satisfies
\begin{align}
\psi|_{U^+}
&\ge0,
\qquad
\psi|_{U^-}
\le0.
\label{eq:reflected-boundary-sign-constraints}
\end{align}
Let
\begin{align}
\mathcal A_{\Lambda,n}^{\mathrm{short}}
&:=
\left\{
|\widetilde\gamma|\le n^4
\text{ for every }
\gamma\in\Upsilon(\psi)
\text{ contained in }\Lambda
\right\}.
\label{eq:local-short-contour-event}
\end{align}
The contour reconstruction formula and the additivity of the contour energy imply the exact conditional-law identity
\begin{align}
&
\operatorname{Law}_{\P_{N,\beta}^n}
\left(
\psi|_\Lambda
\,\middle|\,
U=\Gamma,
A_{N,n},
\widehat\Upsilon_z^{\mathrm{out}}=\widehat G
\right)
\nonumber\\
&\qquad=
\P_{\Lambda,\beta}
\left(
\,\cdot\,
\middle|\,
\psi|_{U^+}\ge0,
\psi|_{U^-}\le0,
\mathcal A_{\Lambda,n}^{\mathrm{short}}
\right).
\label{eq:exact-reflected-conditional-law}
\end{align}
There is no additional restriction coming from $\{U=\Gamma\}$: every contour contained in $\Lambda$ lies in $C_z^{\mathrm{in}}$ and therefore cannot intersect the coarse-graining grid.

Equivalently, the law in \eqref{eq:exact-reflected-conditional-law} is a zero-boundary contour-restricted SOS measure of the type considered in Subsection~\ref{subsec:3.3}.
More precisely, let $\mathcal L_{z,\widehat G}$ be the family of signed contours in $\Lambda$ which have geometric length at most $n^4$ and are compatible with the fixed outer cylinder collection $\widehat G$.
Then
\begin{align*}
&
\operatorname{Law}_{\P_{N,\beta}^n}
\left(
\psi|_\Lambda
\,\middle|\,
U=\Gamma,
A_{N,n},
\widehat\Upsilon_z^{\mathrm{out}}=\widehat G
\right)
=
\P_{\mathcal L_{z,\widehat G},\Lambda,\beta}.
\end{align*}
Furthermore, for every $v\in K$,
\begin{align}
d\left(
v,
\mathcal L_{z,\widehat G}^{\,c}
\right)
&\ge
cn^2.
\label{eq:distance-to-forbidden-contours-local}
\end{align}
Indeed, a contour forbidden only because its length exceeds $n^4$ has spatial diameter at least $c\sqrt{n^4}=cn^2$.
A contour forbidden by the domain or by compatibility with $\widehat G$ must reach $\partial\Lambda$ or the compatibility neighborhood of a fixed outer cylinder.
By \eqref{eq:local-domain-buffer}, these sets are at distance at least $cn^4$ from $K$.
This proves \eqref{eq:distance-to-forbidden-contours-local}.

The same argument, without restricting attention to $K$, shows that, conditionally on the relevant outer cylinder data, the reflected law on each connected component of $B_z$ is a zero-boundary SOS measure with an allowed-contour family of the type appearing in Subsection~\ref{subsec:3.3}.
We shall use this observation below to obtain rough one-point, two-point, and multipoint estimates throughout $B_z$.

\medskip
\noindent
\emph{Step 2: Removal of the long-contour restriction.}
Set
\begin{align*}
\P_{U^+,U^-}
&:=
\P_{\Lambda,\beta}
\left(
\,\cdot\,
\middle|\,
\psi|_{U^+}\ge0,
\psi|_{U^-}\le0
\right).
\end{align*}
Uniformly over the admissible sets $U^+,U^-$, the contour ensemble under $\P_{U^+,U^-}$ is stochastically dominated by the corresponding independent contour process.
Consequently, for $\beta$ sufficiently large,
\begin{align}
\P_{U^+,U^-}
\left(
(\mathcal A_{\Lambda,n}^{\mathrm{short}})^c
\right)
&\le
C|\Lambda|
\sum_{\ell>n^4}3^\ell e^{-\beta\ell}
\nonumber\\
&\le
CM^2e^{-cn^4}
\le
Ce^{-c'n^4}.
\label{eq:remove-long-contours}
\end{align}
Here $|\Lambda|\le M^2$, while $\log M=O(n)$ by \eqref{eq:localization-height-scale} and \eqref{eq:coarse-graining-scale}.
In particular,
\begin{align*}
\P_{U^+,U^-}
\left(
\mathcal A_{\Lambda,n}^{\mathrm{short}}
\right)
&\ge
\frac12
\end{align*}
for all sufficiently small $u$.
Therefore, for every event $A$,
\begin{align}
&
\left|
\P_{U^+,U^-}
\left(
A
\,\middle|\,
\mathcal A_{\Lambda,n}^{\mathrm{short}}
\right)
-
\P_{U^+,U^-}(A)
\right|
\nonumber\\
&\qquad\le
\frac{
\P_{U^+,U^-}
\left(
(\mathcal A_{\Lambda,n}^{\mathrm{short}})^c
\right)
}{
\P_{U^+,U^-}
\left(
\mathcal A_{\Lambda,n}^{\mathrm{short}}
\right)
}
\le
Ce^{-cn^4}.
\label{eq:long-contour-conditioning-error}
\end{align}

\medskip
\noindent
\emph{Step 3: Removal of the boundary compatibility restrictions.}
We first suppose that $A_x$ is increasing.
Define the two extremal boundary-conditioned measures
\begin{align*}
\P_\Lambda^+
&:=
\P_{\Lambda,\beta}
\left(
\,\cdot\,
\middle|\,
\psi|_{\partial^-\Lambda}\ge0
\right),
\\
\P_\Lambda^-
&:=
\P_{\Lambda,\beta}
\left(
\,\cdot\,
\middle|\,
\psi|_{\partial^-\Lambda}\le0
\right).
\end{align*}
By FKG monotonicity,
\begin{align}
\P_\Lambda^-(A_x)
&\le
\P_{U^+,U^-}(A_x)
\le
\P_\Lambda^+(A_x).
\label{eq:FKG-boundary-sandwich}
\end{align}
We claim that
\begin{align}
\left|
\P_\Lambda^\pm(A_x)
-
\P_\beta(A_0)
\right|
&\le
Ce^{-cn^4}.
\label{eq:extremal-boundary-comparison}
\end{align}

We prove the claim for $\P_\Lambda^-$.
Since the event $\{\psi|_{\partial^-\Lambda}\le0\}$ is decreasing,
\begin{align*}
\P_\Lambda^-
&\preceq
\P_{\Lambda,\beta}.
\end{align*}
Hence, using the exponential convergence to the infinite-volume SOS measure and \eqref{eq:local-domain-buffer},
\begin{align}
\P_\Lambda^-(A_x)
&\le
\P_{\Lambda,\beta}(A_x)
\le
\P_\beta(A_0)+Ce^{-cn^4}.
\label{eq:minus-boundary-upper}
\end{align}

For the reverse inequality, let $\Upsilon_\partial^-(\psi)$ be the collection of negative contours adjacent to $\partial\Lambda$, and let $\mathcal E$ be the event that the interior of every boundary-adjacent negative contour stays outside the $n^4/2$-neighborhood of $K$, that is,
\begin{align*}
\overline\gamma
\cap
\left\{
v:
\operatorname{dist}(v,K)\le\frac{n^4}{2}
\right\}
&=
\varnothing
\qquad
\text{for every }
\gamma\in\Upsilon_\partial^-(\psi).
\end{align*} If $\mathcal E^c$ occurs, a boundary-adjacent negative contour has an interior reaching the $n^4/2$-neighborhood of $K$. Since $\operatorname{dist}(K,\partial\Lambda)\ge n^4$, such a contour has diameter at least $n^4/2-O(1)$ and hence length at least $cn^4$. Contour domination and a Peierls estimate therefore yield
\begin{align}
\P_\Lambda^-(\mathcal E^c)
&\le
C|\Lambda|e^{-cn^4}
\le
Ce^{-c'n^4}.
\label{eq:boundary-contour-penetration}
\end{align}
On $\mathcal E$, condition on $\Upsilon_\partial^-(\psi)$ and let $\Lambda'$ be the connected component containing $K$ after deleting the interiors of these boundary contours and the corresponding internal boundary layer.
Then
\begin{align*}
\operatorname{dist}(K,\partial\Lambda')
&\ge
\frac{n^4}{3}.
\end{align*}
By the sign convention for the revealed negative boundary contours, the newly exposed internal boundary lies on the nonnegative side of each deleted contour.
Hence the induced boundary condition on $\Lambda'$ is nonnegative.
The induced boundary condition on $\Lambda'$ is nonnegative.
Thus, by monotonicity, the conditional law in $\Lambda'$ stochastically dominates $\P_{\Lambda',\beta}$.
Exponential convergence in $\Lambda'$ therefore gives
\begin{align}
\P_\Lambda^-(A_x)
&\ge
\E_\Lambda^-
\left[
\mathbf 1_{\mathcal E}
\P_{\Lambda',\beta}(A_x)
\right]
\nonumber\\
&\ge
\left(
1-Ce^{-cn^4}
\right)
\left(
\P_\beta(A_0)-Ce^{-cn^4}
\right)
\nonumber\\
&\ge
\P_\beta(A_0)-Ce^{-cn^4}.
\label{eq:minus-boundary-lower}
\end{align}
This proves \eqref{eq:extremal-boundary-comparison} for $\P_\Lambda^-$.
The proof for $\P_\Lambda^+$ is analogous, using positive boundary contours and reversing the stochastic inequalities.
Combining \eqref{eq:FKG-boundary-sandwich} and \eqref{eq:extremal-boundary-comparison}, we obtain
\begin{align}
\left|
\P_{U^+,U^-}(A_x)
-
\P_\beta(A_0)
\right|
&\le
Ce^{-cn^4}.
\label{eq:comparison-with-sign-constraints}
\end{align}
for every increasing event $A_x$.
The decreasing case follows by taking complements.

Finally, suppose that $A_x=A_x^+\cap A_x^-$, where $A_x^+$ is increasing and $A_x^-$ is decreasing.
Since
\begin{align*}
A_x
&=
A_x^+
\setminus
\left(
A_x^+\cap(A_x^-)^c
\right)
\end{align*}
and both $A_x^+$ and $A_x^+\cap(A_x^-)^c$ are increasing, \eqref{eq:comparison-with-sign-constraints} applied to these two events yields the same comparison for $A_x$.

Combining this comparison with \eqref{eq:exact-reflected-conditional-law} and \eqref{eq:long-contour-conditioning-error}, uniformly over every admissible realization $\widehat G$, we obtain
\begin{align*}
&
\left|
\P_{N,\beta}^n
\left(
A_x
\,\middle|\,
U=\Gamma,
A_{N,n},
\widehat\Upsilon_z^{\mathrm{out}}=\widehat G
\right)
-
\P_\beta(A_0)
\right|
\le
Ce^{-cn^4}.
\end{align*}
Averaging over $\widehat\Upsilon_z^{\mathrm{out}}$ proves \eqref{eq:local-comparison-cell}, since $e^{-cn^4}\le e^{-c'n^2}$ for all sufficiently large $n$.
\end{proof}

\bibliographystyle{plain}
    \bibliography{iso}

\end{document}